\documentclass[12pt]{amsart}
\usepackage{fullpage}
\usepackage[utf8]{inputenc}
\usepackage[english]{babel}
\usepackage{mathtools}
\usepackage{amssymb}
\usepackage{amsthm}
\usepackage{mathrsfs}
\usepackage{nicefrac}
\usepackage{upgreek}
\usepackage[T1]{fontenc}
\usepackage{graphicx}
\usepackage{hyperref}

\usepackage[all]{xy}
\usepackage{graphics}
\usepackage{color}
\usepackage[T1]{fontenc}
\usepackage{parskip}

\usepackage{enumitem}

\makeatletter
\newtheorem*{rep@theorem}{\rep@title}
\newcommand{\newreptheorem}[2]{%
\newenvironment{rep#1}[1]{%
 \def\rep@title{#2 \ref{##1}}%
 \begin{rep@theorem}}%
 {\end{rep@theorem}}}
\makeatother

\newcommand{\dl}{\langle\langle}
\newcommand{\dr}{\rangle\rangle}

\newcommand{\C}[1]{{\mathcal #1}}

\newtheorem{theorem}{Theorem}[section]
\newtheorem{lemma}[theorem]{Lemma}
\newtheorem{corollary}[theorem]{Corollary}
\newtheorem{proposition}[theorem]{Proposition}

\theoremstyle{definition}\newtheorem{definition}[theorem]{Definition}
\theoremstyle{remark}\newtheorem{remark}[theorem]{Remark}
\theoremstyle{definition}
\makeatletter\@addtoreset{case}{example}\makeatother
\theoremstyle{definition}

\begin{document}

\title{Strong boundedness of ${\rm SL}_2(R)$ for rings of S-algebraic integers with infinitely many units}

\author{Alexander A. Trost}
\address{Fakult\"{a}t f\"{u}r Mathematik, Ruhr Universit\"{a}t Bochum, D-44780 Bochum, Germany}
\email{Alexander.Trost@ruhr-uni-bochum.de}

\begin{abstract}
A group is called strongly bounded, if the speed with which it is generated by finitely many conjugacy classes has a positive, lower bound only dependent on the number of the conjugacy classes in question rather than the actual conjugacy classes. Earlier papers \cite{KLM,General_strong_bound} have shown that this is a property common to split Chevalley groups defined using an irreducible root system of rank at least $2$ and the ring of all S-algebraic integers and that the situation is dependent on the number theory of $R$ for ${\rm Sp}_4$ and $G_2.$ In this paper, we will show that ${\rm SL}_2(R)$ is also strongly bounded for $R$ the ring of all S-algebraic integers in a number field $K$ with $R$ having infinitely many units and will give a complete account of the existence of small conjugacy classes generating ${\rm SL}_2(R)$ in terms of the prime factorization of the rational primes $2$ and $3$ in $R.$
\end{abstract}

\maketitle

\section{Introduction} 
\label{intro}

It is a common belief that the group ${\rm SL}_2(R)$ for $R$ the ring of S-algebraic integers in a number field $K$ such that $R$ has infinitely many units, behaves very similar to S-arithmetic split Chevalley groups of higher rank and there have been various results in favor of this philosophy: Classically, there are the results by Serre proving the Congruence Subgroup Property for ${\rm SL}_2(R)$ \cite{MR272790}, which show that the Congruence kernel for ${\rm SL}_2(R)$ and ${\rm SL}_n(R)$ for $n\geq 3$ is identical. Further, it was shown by Carter, Keller and Paige and written up by Morris \cite{MR2357719} that bounded generation not only occurs for higher rank, S-arithmetic Chevalley groups as proven by Carter and Keller \cite{MR704220} and Tavgen \cite{MR1044049}, but also in the case of ${\rm SL}_2(R).$ 
In this paper, we will show that a different property, called \textit{strong boundedness}, previously only shown for S-arithmetic, split Chevalley groups of higher rank, also holds in the case of ${\rm SL}_2(R)$ for $R$ the ring of S-algebraic integers in a number field $K$ such that $R$ has infinitely many units. Roughly speaking, strong boundedness is the property of a group $G$ to admit for each natural number $k$ a global bound $\Delta_k(G)$ such that the word norm $\|\cdot\|_T$ induced by $k$ generating conjugacy classes $T$ of $G$ has diameter at most $\Delta_k(G)$ independent of the specific $T$ in question. This property has first been described by Kedra, Libman and Martin:

\begin{theorem}\cite[Theorem~6.1]{KLM}
\label{KLM_thm}
Let $R$ be a principal ideal domain and let ${\rm SL}_n(R)$ be boundedly generated by elementary matrices for $n\geq 3$ with the diameter 
$\|{\rm SL}_n(R)\|_{EL(n)}$ satisfying $\|{\rm SL}_n(R)\|_{EL(n)}\leq C_n$ for some $C_n\in\mathbb{N}.$ Then ${\rm SL}_n(R)$ is normally generated by the single element $E_{1,n}(1)$ and
\begin{enumerate}
\item{for all finite, normally generating subsets $T$ of $G$, it holds $\|{\rm SL}_n(R)\|_T\leq C_n(4n+4)|T|.$
}
\item{if $R$ has infinitely many maximal ideals, then for each $k\in\mathbb{N}$ there is a finite, normally generating subset $T_k$ of $G$ with $|T_k|=k$ and
$\|{\rm SL}_n(R)\|_{T_k}\geq k$.}
\end{enumerate}
\end{theorem}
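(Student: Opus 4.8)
\emph{Normal generation.} The plan is to deduce everything from the elementary structure. Since bounded generation by elementary matrices forces ${\rm SL}_n(R)=E_n(R)$, it suffices to place every $E_{i,j}(r)$ in the normal closure $\dl E_{1,n}(1)\dr$. Conjugating $E_{1,n}(1)$ by signed permutation matrices (which lie in $E_n(R)\subseteq{\rm SL}_n(R)$) already yields every $E_{i,j}(\pm1)$. For arbitrary $r$ I would use the Steinberg relation $[E_{1,2}(r),E_{2,3}(1)]=E_{1,3}(r)$, which for $n\ge3$ rearranges to
\[
E_{1,3}(r)=\bigl(E_{1,2}(r)\,E_{2,3}(1)\,E_{1,2}(-r)\bigr)\,E_{2,3}(1)^{-1};
\]
the first factor is a single conjugate of $E_{2,3}(1)$, itself a permutation-conjugate of $E_{1,n}(1)$, so $E_{1,3}(r)$ — and, after one more signed permutation, every $E_{i,j}(r)$ — is a product of two conjugates of $E_{1,n}(1)^{\pm1}$. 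Hence $\dl E_{1,n}(1)\dr=E_n(R)={\rm SL}_n(R)$.

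\emph{Reduction of part (1).} The same computation shows $\|E\|_{\{E_{1,n}(1)\}}\le 2$ for every elementary matrix $E$, so bounded generation ($\le C_n$ elementaries per element) gives $\|{\rm SL}_n(R)\|_{\{E_{1,n}(1)\}}\le 2C_n$. Using the submultiplicativity $\|g\|_T\le \|g\|_{\{E_{1,n}(1)\}}\cdot\|E_{1,n}(1)\|_T$ (replace each occurrence of $E_{1,n}(1)^{\pm1}$ in a shortest $\{E_{1,n}(1)\}$-expression of $g$ by a shortest $T$-expression of it), the whole of part (1) collapses to the single estimate $\|E_{1,n}(1)\|_T\le (2n+2)\,|T|$, since $2C_n\cdot(2n+2)=C_n(4n+4)$.

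\emph{The crux.} Writing $T=\{t_1,\dots,t_m\}$ and $N_i=\dl t_i\dr$, the normal subgroups $N_i$ satisfy $N_1\cdots N_m=\dl T\dr={\rm SL}_n(R)$, so the product of the individual normal closures is everything. For $n\ge3$ the Vaserstein--Suslin sandwich classification attaches to each $N_i$ a level ideal $\F{b}_i$ with $E_n(R,\F{b}_i)\subseteq N_i$, and $N_1\cdots N_m={\rm SL}_n(R)=E_n(R,R)$ forces $\F{b}_1+\cdots+\F{b}_m=R$. Choosing $b_i\in\F{b}_i$ with $\sum_i b_i=1$ and using additivity of $E_{1,n}$ gives the clean decomposition
\[
E_{1,n}(1)=\prod_{i=1}^{m}E_{1,n}(b_i),\qquad E_{1,n}(b_i)\in E_n(R,\F{b}_i)\subseteq N_i .
\]
Everything now rests on the uniform per-generator bound $\|E_{1,n}(b_i)\|_{\{t_i\}}\le 2n+2$: a relative elementary matrix of the level of $t_i$ must be written as a bounded number of conjugates of $t_i^{\pm1}$. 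I would attempt this by extracting an elementary matrix from $t_i$ through a single commutator $[t_i,E]=t_i\,{}^{E}t_i^{-1}$ (two conjugates of $t_i^{\pm1}$) and then routing through the coordinate positions, which accounts for the factor $2(n+1)$. This is the main obstacle: producing the \emph{exact} element $E_{1,n}(b_i)$, with a conjugate-count independent of the arithmetic complexity of $t_i$, is where the sandwich structure and a relative form of bounded elementary generation must be pushed through with care.

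\emph{Part (2).} Here $R$ is a PID with infinitely many maximal ideals. I would fix distinct maximal ideals $\F{p}_1,\dots,\F{p}_k$ and, using that $\prod_{j\ne i}\F{p}_j$ is principal, write it as $(a_i)$; set $t_i=E_{1,n}(a_i)$ and $T_k=\{t_1,\dots,t_k\}$. Since the prime factorisation of $a_i$ is exactly $\prod_{j\ne i}\F{p}_j$, no maximal ideal contains all of $a_1,\dots,a_k$, so $(a_1,\dots,a_k)=R$ and $\dl T_k\dr=E_n(R,R)={\rm SL}_n(R)$; thus $T_k$ is normally generating of size $k$. For the lower bound consider the reductions $\pi_i\colon{\rm SL}_n(R)\to{\rm SL}_n(R/\F{p}_i)$. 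For $j\ne i$ we have $a_j\in\F{p}_i$, so $\pi_i(t_j)=I$ and every conjugate of $t_j^{\pm1}$ dies under $\pi_i$; hence in any expression of $g=E_{1,n}(1)$ as a product of conjugates of elements of $T_k^{\pm1}$, the nontrivial image $\pi_i(g)=E_{1,n}(\bar1)\ne I$ can only be produced by factors conjugate to $t_i^{\pm1}$, forcing at least one such factor for each $i$. As the $k$ indices demand $k$ distinct generators, any such expression has length $\ge k$, whence $\|{\rm SL}_n(R)\|_{T_k}\ge\|E_{1,n}(1)\|_{T_k}\ge k$.
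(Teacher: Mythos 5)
Your overall architecture is sound and, as it happens, mirrors the strategy of Kedra--Libman--Martin themselves (note that the paper under review does not prove this statement at all: it is imported verbatim from \cite{KLM}, so the only meaningful comparison is with the correctness of your argument). The normal generation of ${\rm SL}_n(R)$ by $E_{1,n}(1)$, the estimate $\|{\rm SL}_n(R)\|_{\{E_{1,n}(1)\}}\leq 2C_n$, the submultiplicativity reduction of part (1) to the single bound $\|E_{1,n}(1)\|_T\leq(2n+2)|T|$, the passage to level ideals with $\F{b}_1+\cdots+\F{b}_m=R$ and $E_{1,n}(1)=\prod_i E_{1,n}(b_i)$, and essentially all of part (2) are correct (in part (2) you should add the one-line observation that no single factor can be simultaneously a conjugate of $t_i^{\pm 1}$ and of $t_j^{\pm 1}$ for $i\neq j$, since the reduction $\pi_i$ kills the latter but not the former; this is what makes the $k$ required factors genuinely distinct).

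However, part (1) has a genuine gap, sitting exactly where you flag it: the uniform per-generator bound $\|E_{1,n}(b_i)\|_{\{t_i\}}\leq 2n+2$. The sandwich classification is purely qualitative: it gives the containment $E_n(R,\F{b}_i)\subseteq\dl t_i\dr$, i.e.\ membership, but gives no control whatsoever on \emph{how many} conjugates of $t_i^{\pm 1}$ are needed to write a given element of $E_n(R,\F{b}_i)$; a priori this number could depend on the arithmetic complexity of $t_i$ and be unbounded over all normally generating sets. Since the whole content of the theorem is precisely this uniformity (strong boundedness, as opposed to mere normal generation, which your first paragraph already establishes), the theorem cannot be recovered from the normal subgroup classification alone. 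The commutator sketch you offer ($[t_i,E]$ as two conjugates, then ``routing through coordinate positions'') produces \emph{some} bounded-length elements of $\dl t_i\dr$, but not the specific prescribed matrix $E_{1,n}(b_i)$ for an arbitrary $b_i$ in the level ideal of $t_i$. What is needed is an explicit extraction lemma: for every noncentral $A\in{\rm SL}_n(R)$, every $x\in l(A)$ and every $r\in R$, the matrix $E_{1,n}(rx)$ is a product of a number of conjugates of $A^{\pm 1}$ bounded in terms of $n$ only. This lemma is the technical core of the proof in \cite{KLM}, and it is also the exact analogue of Theorem~\ref{technical_thm1} of the present paper, which the author must prove by a model-theoretic compactness argument precisely because such quantitative bounds do not follow from the qualitative theory of normal subgroups. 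Until you supply this lemma, by explicit matrix computation as in \cite{KLM} or by a compactness argument as in Section~4 here, part (1) remains unproved.
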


So, in particular ${\rm SL}_n(R)$ is strongly bounded for rings of S-algebraic integers $R$ with class number $1.$ This is due to the fact that according to Carter and Keller \cite{MR1044049}, the group ${\rm SL}_n(R)$ is boundedly generated by elementary matrices. In our previous paper \cite{General_strong_bound}, we have shown that this type of statement generalizes to all other S-arithmetic, split Chevalley groups $G(\Phi,R)$ defined using an irreducible root system $\Phi$ of rank at least $2$ and the ring of S-algebraic integers $R$. However surprisingly, it turned out that the behavior of the rank two examples ${\rm Sp}_4(R)$ and $G_2(R)$ are markedly different from the other cases. Namely, to describe the behavior of the $\Delta_k$ for ${\rm Sp}_4(R)$ and $G_2(R)$, it is necessary to understand how the rational prime $2$ factors in the ring of integers $R$ and the situation is generally more involved in this case. So it is unsurprising that the investigation for the case of ${\rm SL}_2(R)$ is even more complicated: The strategies used by Kedra, Libman and Martin to prove Theorem~\ref{KLM_thm} and the similar ones used to derive our results in \cite{General_strong_bound} rely ultimately on the presence of an irreducible, rank $2$ root subsystem of $\Phi$ and the nice interaction of different root elements in $G(\Phi,R)$. So clearly this approach will not work anymore for the example of ${\rm SL}_2(R)$. However, using so-called \textit{self-reproducing elements $C(x)$} of $SL_2(R)$ for $x\in R$ in addition to root elements, one can still prove strong boundedness for ${\rm SL}_2(R)$ assuming $R$ has infinitely many units:

\begin{theorem}\label{main_thm}
Let $R=\C O_S$ be the ring of S-algebraic integers in a number field $K$ such that $R$ has infinitely many units. Then there is a constant $C(R)\in\mathbb{N}$ such that $\Delta_k({\rm SL}_2(R))\leq C(R)k$ holds for all $k\in\mathbb{N}.$
\end{theorem}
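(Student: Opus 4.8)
The plan is to bound the conjugation-invariant word norm $\norm{\cdot}_T$ on every normally generating set $T$ of size $k$ by first reducing to the standard root elements and then showing these have bounded $T$-norm up to a single factor of $k$. Throughout I use that $\norm{\cdot}_T$ is subadditive and conjugation-invariant, together with the submultiplicativity $\norm{g}_T\leq \norm{g}_{\{z\}}\cdot\norm{z}_T$ valid for any fixed normal generator $z$ (substitute a shortest $\{z\}$-expression of $g$ into shortest $T$-expressions of $z$ and $z^{-1}$, noting $\norm{z^{-1}}_T=\norm{z}_T$). The strategy is thus to separate the problem into a uniform bound $\norm{g}_{\{z\}}\leq C_0(R)$ for a fixed root element $z=E_{1,2}(1)$, and a bound $\norm{z}_T\leq C_1(R)k$; multiplying these gives Theorem~\ref{main_thm} with $C(R)=C_0(R)C_1(R)$.

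First I would invoke the bounded elementary generation of ${\rm SL}_2(R)$ due to Carter, Keller and Paige \cite{MR2357719}: there is $C_{\mathrm{BG}}(R)$ with $\norm{g}_{EL(2)}\leq C_{\mathrm{BG}}(R)$ for all $g$. Since every element is then a product of at most $C_{\mathrm{BG}}(R)$ matrices $E_{1,2}(x),E_{2,1}(x)$, and since $E_{2,1}(x)$ is conjugate to $E_{1,2}(-x)$ by the Weyl element so that $\norm{E_{2,1}(x)}_{\{z\}}=\norm{E_{1,2}(x)}_{\{z\}}$, the bound $\norm{g}_{\{z\}}\leq C_0(R)$ reduces to a uniform estimate $\norm{E_{1,2}(x)}_{\{z\}}\leq C_0'(R)$ valid for all $x\in R$. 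This self-reproduction step is the heart of the argument.

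For that step I would exploit that the conjugates of $z=E_{1,2}(1)$ are exactly the strength-one transvections $g\,z\,g^{-1}=I+(ge_1)(e_2^{\mathsf T}g^{-1})$, and that conjugation by the diagonal matrix ${\rm diag}(u,u^{-1})$, for a unit $u$ of infinite order furnished by the hypothesis that $R^\times$ is infinite, sends $E_{1,2}(x)$ to $E_{1,2}(u^2x)$. Together with additivity $E_{1,2}(a)E_{1,2}(b)=E_{1,2}(a+b)$ this reduces the target to an arithmetic representation problem: expressing an arbitrary $x\in R$ through a number of transvection entries (essentially the quadratic quantities $a^2,c^2,ac$ appearing above, rescaled by units) that is bounded independently of $x$. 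The self-reproducing elements $C(x)$ are precisely the explicit witnesses to such bounded representations, playing the role that the commutator relations of a rank-two root subsystem play in the higher-rank arguments of \cite{KLM,General_strong_bound}, which are unavailable here. I expect this to be the main obstacle: a naive representation of $x$ as a bounded sum of unit multiples $u^{2n}$ of $1$ fails (already for $R=\mathbb{Z}[1/p]$ the number of summands grows like $\log\card{x}$), so the bound must instead come from solving the relevant unit-quadratic equations, and the local obstructions to doing so with bounded length sit exactly at the rational primes $2$ and $3$. This is where the infinitude of $R^\times$ (Dirichlet's theorem guarantees unit rank at least one) is indispensable and where the case analysis according to how $2$ and $3$ factor in $R$ enters; in the unfavorable cases I would replace the single generator $z$ by a fixed bounded product of root elements chosen to remain nondegenerate modulo the primes above $2$ and $3$.

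Finally I would establish $\norm{z}_T\leq C_1(R)k$ following the template of Theorem~\ref{KLM_thm} and \cite{General_strong_bound}. Since $T$ normally generates ${\rm SL}_2(R)$, the normal closures of its $k$ classes jointly generate the group; using the Congruence Subgroup Property \cite{MR272790} to identify the relevant normal subgroups with congruence subgroups, I would extract from each $t_i\in T$ a controlled root element at cost $O(1)$ and assemble these, together with the self-reproduction above, into an expression for $z$ of length $O(k)$. The linear dependence on $k$ is forced in the worst case, since adversarial generating classes can each be responsible for a single prime, so that all $k$ of them are genuinely needed; this matches the lower-bound mechanism of Theorem~\ref{KLM_thm}(2) and shows that $\Delta_k({\rm SL}_2(R))\leq C(R)k$ is of the correct order. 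Combining the two estimates via submultiplicativity then completes the proof.
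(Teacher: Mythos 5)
Your proposal has two genuine gaps, one structural and one at the step you yourself identify as ``the heart of the argument.'' The structural gap: your submultiplicative scheme requires $z=E_{12}(1)$ to be a normal generator of ${\rm SL}_2(R)$ with $\|g\|_{\{z\}}\leq C_0(R)$ uniformly, but $E_{12}(1)$ does not normally generate ${\rm SL}_2(R)$ in general. By Corollary~\ref{abelianization} the abelianization of ${\rm SL}_2(R)$ is $\mathbb{F}_2^{2r_2(R)}\times\mathbb{Z}_4^{r_1(R)}\times\mathbb{F}_3^{q(R)}$, which is non-cyclic whenever $v(R)\geq 2$ (e.g.\ whenever $2$ splits in $R$), so no \emph{single} element can normally generate and $\|g\|_{\{z\}}=+\infty$ for some $g$; this is exactly the phenomenon behind Theorem~\ref{lower_bounds_sl2}(ii). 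Your proposed repair --- replacing $z$ by a finite set $Z$ of root elements with $\|{\rm SL}_2(R)\|_Z<\infty$ --- is circular: the existence of a finite normally generating set with finite (let alone controlled) diameter is an instance of the strong boundedness you are trying to prove, and you give no independent construction. The paper avoids this entirely: it never bounds $\|\cdot\|_{Z}$ for a finite $Z$, but instead works with the infinite set $Q_I=\{E_{12}(x)\mid x\in I\}$ for the fixed ideal $I=(u^4-1)$, bounds $\|N_I\|_{Q_I}$ via bounded elementary generation (Lemma~\ref{congruence_fin}, resting on \cite{MR2357719}), and then handles the \emph{finite} quotient ${\rm SL}_2(R)/N_I$ by a pigeonhole constant $M(I)$ --- an additive combination $M(I)+K(I,R)L_3|T|$, not a multiplicative one.

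The deeper gap is the estimate $\|z\|_T\leq C_1(R)k$. You assert that one can ``extract from each $t_i\in T$ a controlled root element at cost $O(1)$'' following the template of \cite{KLM,General_strong_bound}, but the paper's starting point is precisely that this template is unavailable for ${\rm SL}_2$: those arguments need a rank-$2$ root subsystem and the interaction of distinct root subgroups. You correctly sense that self-reproducing elements and a ``bounded representation problem'' must replace it, and you correctly flag this as the main obstacle --- but you leave it unresolved, which means the proof is missing exactly where the work lies. The paper's mechanism has two ingredients with no counterpart in your sketch: (a) the Costa--Keller structure theory of normal subgroups of ${\rm SL}_2$ over rings with many units (level ideals, radices, and the sandwich $[E(2,R),G(N)]\subset N\subset G(N)$ of Theorem~\ref{sandwich_thm}, giving Corollary~\ref{sandwich_cor}: $C(bx^3)\in\dl A\dr$ for $x\in l(A)$), and (b) a G\"odel-compactness argument converting this qualitative membership into the \emph{uniform} constants $L_1,L_2$ of Theorems~\ref{technical_thm1} and~\ref{technical_thm2}, independent of $A$, $x$, $b$ and even of $R$; these are then assembled linearly in $|T|$ by the additivity Lemma~\ref{tech1}. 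The uniformity of these constants is the actual mathematical content of the upper bound, and nothing in your proposal --- neither the diagonal-conjugation/additivity reductions nor the appeal to ``unit-quadratic equations'' --- produces it. (A minor further point: the Congruence Subgroup Property \cite{MR272790} plays no role in the paper's proof of Theorem~\ref{main_thm}; it enters only in Section~5 for the abelianization computation and the lower bounds.)
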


To prove this we will reformulate a description of normal subgroups of ${\rm SL}_2(R)$ for $R=\C O_S$ a ring of S-algebraic integers with infinitely many units by Costa and Keller \cite{MR1114610} in terms of a first order theory and then invoke a compactness argument together with the afore-mentioned bounded generation result by Carter, Keller and Paige \cite{MR2357719}.

Further, the behavior of $\Delta_k$ for ${\rm Sp}_4(R)$ and $G_2(R)$ depends on the number theory of $R.$ Namely, our theorem \cite[Theorem~6.3]{General_strong_bound} states that the size of the smallest possible normally generating subset is controlled by the number of certain problematic prime factors of the rational prime $2.$ This dependence on the number theory of $R$ still exists for ${\rm SL}_2(R)$, but the situation is overall more complicated. Not only are there other problematic prime divisors besides certain divisors of $2$, but to a certain extent it is even relevant how the prime divisors of $2$ ramify in $R$:

\begin{theorem}\label{lower_bounds_sl2}
Let $R=\C O_S$ be the ring of S-algebraic integers in a number field $K$ such that $R$ has infinitely many units. Set 
\begin{align*}
&r_1(R):=|\{\C P\mid \C P\text{ is an unramified prime divisor of }2R\text{ with }R/\C P=\mathbb{F}_2\}|,\\
&r_2(R):=|\{\C P\mid \C P\text{ is a ramified prime divisor of }2R\text{ with }R/\C P=\mathbb{F}_2\}|\text{ and}\\
&q(R):=|\{\C P\mid \C P\text{ is a prime divisor of }3R\text{ with }R/\C P=\mathbb{F}_3\}|.
\end{align*}
Then define $v(R):=\max\{2r_2(R)+r_1(R),q(R)\}.$ Then
\begin{enumerate}
\item{$\Delta_k({\rm SL}_2(R))\geq 2k$ holds for all $k\geq v(R)$ and}
\item{$\Delta_k({\rm SL}_2(R))=-\infty$ holds for all $k<v(R).$}
\end{enumerate}
\end{theorem}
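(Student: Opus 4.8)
The plan is to read both statements off the finite abelian quotients of ${\rm SL}_2(R)$. The point is that ${\rm SL}_2$ of a finite local ring whose residue field has more than three elements is perfect (the presence of a unit $u$ with $u^{2}-1$ again a unit trivializes the abelianization of $E_2$), so the only congruence reductions producing a nontrivial abelian quotient are those at primes above $2$ or $3$ with residue field $\mathbb{F}_2$ or $\mathbb{F}_3$. These will control the minimal number of normal generators in part (2), while the diameter growth in part (1) will come from spreading the generators over auxiliary primes of residue characteristic at least $5$, where ${\rm PSL}_2$ of the residue field is simple.

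For part (2) I would form the abelian quotient $A$ obtained by reducing ${\rm SL}_2(R)$ (surjectively, since reduction modulo an ideal of a Dedekind domain is onto) to the product, over the primes counted by $r_1,r_2,q$, of ${\rm SL}_2(R/\C P)$ for the unramified divisors of $2R$, of ${\rm SL}_2(R/\C P^{2})$ for the ramified divisors of $2R$, and of ${\rm SL}_2(R/\C P)$ for the divisors of $3R$, and then abelianizing. The inputs are ${\rm SL}_2(\mathbb{F}_2)^{\mathrm{ab}}=\mathbb{Z}/2$, ${\rm SL}_2(\mathbb{F}_3)^{\mathrm{ab}}=\mathbb{Z}/3$, and the crucial computation for a ramified $\C P\mid 2R$: there $2\in\C P^{2}$ forces $R/\C P^{2}\cong\mathbb{F}_2[\varepsilon]/(\varepsilon^{2})$, and since every unit of $\mathbb{F}_2[\varepsilon]$ squares to $1$ the torus relation is vacuous, so ${\rm SL}_2(\mathbb{F}_2[\varepsilon])^{\mathrm{ab}}=E_2(\mathbb{F}_2[\varepsilon])^{\mathrm{ab}}$ is a quotient of the additive group $(\mathbb{F}_2[\varepsilon],+)\cong(\mathbb{Z}/2)^{2}$; a Hochschild--Serre count against ${\rm SL}_2(\mathbb{F}_2)$ gives order $4$, so
\[
{\rm SL}_2(\mathbb{F}_2[\varepsilon])^{\mathrm{ab}}\cong(\mathbb{Z}/2)^{2}.
\]
Hence the $2$-rank of $A$ is $2r_2(R)+r_1(R)$ and its $3$-rank is $q(R)$, and by coprimality $d(A)=\max\{2r_2(R)+r_1(R),q(R)\}=v(R)$. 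Since a normally generating set of ${\rm SL}_2(R)$ must map onto a generating set of the abelian group $A$, no such set can have fewer than $v(R)$ elements, so $\Delta_k({\rm SL}_2(R))=-\infty$ for $k<v(R)$.

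For part (1) I fix $k\geq v(R)$ and choose distinct maximal ideals $\C P_1,\dots,\C P_k$ of residue characteristic at least $5$ (available since $R$ has infinitely many maximal ideals), with reductions $\pi_i\colon{\rm SL}_2(R)\to{\rm PSL}_2(R/\C P_i)$. I would then construct a normally generating set $T=\{t_1,\dots,t_k\}$ with $\pi_j(t_i)=1$ for $j\neq i$, with $\pi_i(t_i)\neq1$, and with the images of $t_1,\dots,t_k$ generating $A$; the support constraints sit at primes coprime to $6$ and therefore do not interfere with the conditions defining $A$. Using surjectivity of reduction one picks a target $g$ whose image $\pi_i(g)$ avoids $\{1\}$ and the two conjugacy classes of $\pi_i(t_i)^{\pm1}$ for every $i$ -- possible because ${\rm PSL}_2(R/\C P_i)$ is simple with more than three conjugacy classes. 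The estimate is then purely combinatorial: in any expression $g=\prod_{m=1}^{L}s_m$ with each $s_m$ a conjugate of some $t_{i_m}^{\pm1}$, applying $\pi_i$ annihilates every factor with $i_m\neq i$ and leaves $\pi_i(g)$ as a product of the factors with $i_m=i$; since $\pi_i(g)$ is neither trivial nor a single such conjugate, at least two of these factors occur, and summing over the $k$ coordinates gives $L\geq 2k$. Thus $\|{\rm SL}_2(R)\|_T\geq 2k$ and $\Delta_k({\rm SL}_2(R))\geq 2k$.

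The main obstacle is producing the set $T$ of part (1): one needs $k$ elements that \emph{simultaneously} satisfy the coordinate-support conditions at $\C P_1,\dots,\C P_k$, generate the finite abelianization $A$, and still normally generate all of ${\rm SL}_2(R)$. This is exactly where the structural results enter. The Costa--Keller description of the normal subgroups of ${\rm SL}_2(R)$ \cite{MR1114610}, reformulated as in the proof of Theorem~\ref{main_thm}, together with the bounded generation of Carter, Keller and Paige \cite{MR2357719}, show that normal generation is detected by finitely many congruence conditions; this lets one prescribe the behaviour of $T$ at $\C P_1,\dots,\C P_k$ and at the primes above $2$ and $3$ while choosing it freely enough elsewhere to fill out the remaining conditions. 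Checking that the $T$ so obtained meets the normal generation criterion, and disposing of the degenerate cases in which $v(R)$ is very small, are the remaining technical points.
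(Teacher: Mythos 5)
Your part (2) is essentially the paper's argument and is sound: the congruence quotient gives an abelian quotient $(\mathbb{Z}/2)^{r_1(R)+2r_2(R)}\times(\mathbb{Z}/3)^{q(R)}$ of rank $v(R)$, and a normally generating set must surject onto it. (You even avoid the congruence subgroup property, which the paper uses to pin down the full abelianization $\mathbb{F}_2^{2r_2(R)}\times\mathbb{Z}_4^{r_1(R)}\times\mathbb{F}_3^{q(R)}$; taking ${\rm SL}_2(R/\C Q)$ instead of ${\rm SL}_2(R/\C Q^2)$ at unramified primes changes $\mathbb{Z}/4$ to $\mathbb{Z}/2$ but not the rank.) The one thin spot is the ramified computation ${\rm SL}_2(\mathbb{F}_2[\varepsilon])^{\mathrm{ab}}\cong(\mathbb{Z}/2)^2$, which you assert via a "Hochschild--Serre count". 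Your route is viable -- for the split extension one has $G^{\mathrm{ab}}\cong (K^{\mathrm{ab}})_Q\times Q^{\mathrm{ab}}$ and one must check the coinvariants $(K^{\mathrm{ab}})_Q$ are nontrivial -- but that coinvariant computation is precisely the content of the paper's Proposition~\ref{ramified_abelianization} (the explicit ${\rm SL}_2(\mathbb{F}_2)$-invariant homomorphism onto $\C P/\C P^2$), and it is the crucial input behind the coefficient $2r_2(R)$; it should be carried out, not asserted.

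The genuine gap is in part (1): you never construct $T$, and the reason you give for why it can be constructed rests on a false paraphrase of the criterion. Normal generation is \emph{not} "detected by finitely many congruence conditions": Corollary~\ref{normal_gen_criterion} requires, besides surjecting onto the abelianization, that $\Pi(T)=\emptyset$, i.e.\ that for \emph{every} nonzero prime $\C Q$ of $R$ some element of $T$ is non-scalar modulo $\C Q$. Prescribing $t_1,\dots,t_k$ by CRT at the finitely many primes $\C P_1,\dots,\C P_k$ and at the primes over $2$ and $3$ leaves their behaviour at all remaining primes uncontrolled, and your support condition $\pi_j(t_i)=1$ for $j\neq i$ forces each $t_i$ to be congruent to a scalar modulo many primes, so the danger of a rogue prime lying in $\Pi(T)$ is real: for instance, if the $t_i$ are elementary matrices $E_{12}(s_i)$ (as in the paper), any common prime divisor of $s_1,\dots,s_k$ outside the prescribed set destroys normal generation. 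The paper closes exactly this hole with a class-number device: each $s_u$ is taken to be a generator of an explicitly factored \emph{principal} ideal -- a product of the auxiliary primes $\C Y_f$ and the primes over $2$ and $3$, raised to the class number $C$ of $R$ so that the product is principal -- so the full prime support of every $s_u$ is known and $\bigcap_u\Pi(\{E_{12}(s_u)\})=\emptyset$ can be verified directly. Without this (or some substitute mechanism controlling the uncontrolled primes), "choosing it freely enough elsewhere" is not an argument. The rest of your part (1) -- auxiliary primes of residue characteristic at least $5$, a target $g$ avoiding the classes of $\pi_i(t_i)^{\pm1}$, and the coordinatewise count giving $L\geq 2k$ -- is correct and matches the counting argument the paper cites.
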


This is proven by extracting necessary and sufficient conditions on a set $T$ to normally generate ${\rm SL}_2(R)$ from the proof of Theorem~\ref{main_thm} to subsequently use the congruence subgroup property to describe the problem posed by the prime ideals mentioned in Theorem~\ref{lower_bounds_sl2}. Of particular interest in this context is an epimorphism ${\rm SL}_2(R/\C P^2)\to\mathbb{F}_2\oplus\mathbb{F}_2=(R/\C P^2,+)$ we construct in the proof of Proposition~\ref{ramified_abelianization} for a ramified prime divisor $\C P$ of $2$ with $R/\C P=\mathbb{F}_2$ highlighting the difference between ramified and unramified prime divisors of $2$ in $R.$ At the end of the paper, we also apply Theorem~\ref{lower_bounds_sl2} to the more explicit example of rings of quadratic integers to obtain Corollary~\ref{square_root_v_values}.\\

The paper is structured as follows: In the second section, we introduce the used notation. The third section explains how to derive Theorem~\ref{main_thm} from various technical results and in the fourth section we explain how to deduce these technical results from the structure theory of normal subgroups of ${\rm SL}_2.$ In the fifth section, we will prove Theorem~\ref{lower_bounds_sl2} using the congruence subgroup property and a careful analysis of groups of the form ${\rm SL}_2(R/\C P^L)$ for prime ideals $\C P$ and natural numbers $L\in\mathbb{N}.$ 

\section{Basic definitions and notions}
\label{sec_basic_notions}

First, recall the definition of S-algebraic integers:

\begin{definition}
Let $\C O_K$ be the ring of integers in the number field $K$ and let $S$ be a finite set of non-zero prime ideals in $\C O_K.$ Then the ring 
\begin{equation*}
\C O_S:=\left\{a/b\mid a,b\in\C O_K,b\neq 0,\{\text{ prime divisors of }b\C O_K\}\subset S\right\}
\end{equation*} 
is called the \textit{ring of S-algebraic integers in $K$}.
\end{definition}

\begin{remark}
In the rest of the paper, we will for the sake of brevity, usually not mention the choice of the finite set of non-zero prime divisors $S$ in $\C O_K,$ when talking about the ring of S-algebraic integers $\C O_S$ in a number field $K.$
\end{remark}

We also recall the word norms studied in this paper: 

\begin{definition}
Let $G$ be a group.
\begin{enumerate}
\item{We define $A^B:=B^{-1}AB$ for $A,B\in G$.}
\item{For $T\subset G$, we define $\dl T\dr$ as the smallest normal subgroup of $G$ containing $T.$}
\item{A subset $T\subset G$ is called a \textit{normally generating set} of $G$, if $\dl T\dr=G$.}
\item{For $k\in\mathbb{N}$ and $T\subset G$ denote by 
\begin{equation*}
B_T(k):=\bigcup_{1\leq i\leq k}\{x_1\cdots x_i\mid\forall j\leq i: x_j\text{ conjugate to }A\text{ or } A^{-1}\text{ and }A\in T\}\cup\{1\}.
\end{equation*}
Further set $B_T(0):=\{1\}.$ If $T$ only contains the single element $A$, then we write $B_A(k)$ instead of $B_{\{A\}}(k)$.}
\item{Define for a set $T\subset G$ the \textit{conjugation invariant word norm} $\|\cdot\|_T:G\to\mathbb{N}_0\cup\{+\infty\}$ by 
$\|A\|_T:=\min\{k\in\mathbb{N}_0|A\in B_T(k)\}$ for $A\in\dl T\dr$ and by $\|A\|_T:=+\infty$ for $A\notin\dl T\dr.$ The diameter 
$\|G\|_T={\rm diam}(\|\cdot\|_T)$ of $G$ is defined as the minimal $N\in\mathbb{N}$ such that $\|A\|_T\leq N$ for all $A\in G$ or as $+\infty$ if there is no such $N$.}
\item{Define for $k\in\mathbb{N}$ the invariant 
\begin{equation*}
\Delta_k(G):=\sup\{{\rm diam}(\|\cdot\|_T)|\ T\subset G\text{ with }|T|\leq k,\dl T\dr=G\}\in\mathbb{N}_0\cup\{\pm\infty\}
\end{equation*}
with $\Delta_k(G)$ defined as $-\infty$, if there is no normally generating set $T\subset G$ with $|T|\leq k.$ 
}
\item{The group $G$ is called \textit{strongly bounded}, if $\Delta_k(G)<+\infty$ for all $k\in\mathbb{N}$.}
\end{enumerate}
\end{definition}

Let us next recall the definition of ${\rm SL}_2:$

\begin{definition}
Let $R$ be a commutative ring with $1$. Then ${\rm SL}_2(R):=\{A\in R^{2\times 2}\mid a_{11}a_{22}-a_{12}a_{21}=1\}.$
\end{definition}

Obviously for any commutative ring $R$ and any $x\in R,$ the matrices 
\begin{equation*}
E_{12}(x)=
\begin{pmatrix}
1 & x\\
0 & 1
\end{pmatrix}
\text{ and }
E_{21}(x)=
\begin{pmatrix}
1 & 0\\
x & 1
\end{pmatrix}
\end{equation*}
are elements of ${\rm SL}_2(R).$ They are called \textit{elementary matrices} and we denote the set of elementary matrices $\{E_{12}(x),E_{21}(x)\mid x\in R\}$ by ${\rm EL}.$ The subgroup of ${\rm SL}_2(R)$ generated by ${\rm EL}$ is denoted by $E(2,R).$ Furthermore, for an ideal $I\unlhd R,$ we denote by $E(2,R,I)$ the normal subgroup of $E(2,R)$ generated by the $E(2,R)$-conjugates of elements of $\{E_{12}(x)\mid x\in I\}.$ Further, we define the reduction homomorphism $\pi_I:{\rm SL}_2(R)\to{\rm SL}_2(R/I)$ as the group homomorphism induced by the quotient homomorphism $R\to R/I$ and the subgroup ${\rm SL}_2(R,I)$ of ${\rm SL}_2(R)$ as the kernel of the homomorphism $\pi_I.$ Further, for a unit $u\in R^*$ the element 
\begin{equation*}
h(u)=
\begin{pmatrix}
u & 0\\
0 & u^{-1}
\end{pmatrix}
\end{equation*}
is also an element of ${\rm SL}_2(R).$ We also introduce the concept of self-reproducing elements:

\begin{definition}
Let $R$ be a commutative ring with $1$ and let $x\in R$ be given. Then $C(x):=E_{21}(x)\cdot E_{12}(x)$ is called a \textit{self-reproducing element}. Further, for $X\subset R,$ one defines $C(X)$ as the subgroup of $E(2,R)$ normally generated by the set $\{C(x)\mid x\in X\}.$  
\end{definition}

\begin{remark}
The elements $C(x)$ are called self-reproducing, because the subgroup $C(X)$ normally generated by the self-reproducing elements $\{C(x)\mid x\in X\}$ has the property 
\begin{equation*}
C(X)=[E(2,R),C(X)]
\end{equation*}
according to \cite[Theorem~1.1.1(i)]{MR1114610}. 
\end{remark}

Self-reproducing elements have the following useful property:

\begin{lemma}\label{tech1}
Let $R$ be a commutative ring with $1$, let $T\subset{\rm SL}_2(R), x_1,x_2\in R$ and $k_1,k_2\in\mathbb{N}$ be given. Assume further that $C(x_1)\in B_T(k_1)$ and
$C(x_2)\in B_T(k_2).$ Then $C(x_1+x_2)\in B_T(k_1+k_2).$ 
\end{lemma}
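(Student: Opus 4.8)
The plan is to reduce the statement to a single group-theoretic identity expressing $C(x_1+x_2)$ as the product of $C(x_1)$ with a \emph{conjugate} of $C(x_2)$, and then to feed this into two elementary closure properties of the balls $B_T(k)$.

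First I would record these two properties, both of which follow directly from the definition of $B_T(k)$. (i) Each $B_T(k)$ is invariant under conjugation by arbitrary elements of ${\rm SL}_2(R)$: if $A=y_1\cdots y_i\in B_T(k)$ with $i\leq k$ and each $y_j$ conjugate to some element of $T$ or its inverse, then for any $g$ we have $A^g=y_1^g\cdots y_i^g$, and each $y_j^g$ is conjugate to the same element of $T^{\pm1}$, so $A^g\in B_T(k)$. (ii) The balls are superadditive under multiplication, $B_T(k_1)\cdot B_T(k_2)\subseteq B_T(k_1+k_2)$, since concatenating a product of at most $k_1$ such conjugates with a product of at most $k_2$ of them yields a product of at most $k_1+k_2$ of them; the degenerate cases involving the identity are covered by the inclusions $B_T(k)\subseteq B_T(k')$ for $k\leq k'$.

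The heart of the argument is the identity
\begin{equation*}
C(x_1+x_2)=C(x_1)\cdot C(x_2)^{E_{12}(x_1)}.
\end{equation*}
I would verify it by direct expansion, using only that $E_{12}$ and $E_{21}$ are homomorphisms from $(R,+)$. Writing out the right-hand side gives $E_{21}(x_1)E_{12}(x_1)\cdot E_{12}(-x_1)E_{21}(x_2)E_{12}(x_2)\cdot E_{12}(x_1)$; the adjacent factors $E_{12}(x_1)E_{12}(-x_1)$ collapse to the identity, leaving $E_{21}(x_1)E_{21}(x_2)E_{12}(x_2)E_{12}(x_1)$, which by additivity equals $E_{21}(x_1+x_2)E_{12}(x_1+x_2)=C(x_1+x_2)$.

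Granting the identity, the conclusion is immediate: $C(x_1)\in B_T(k_1)$ by hypothesis, while $C(x_2)\in B_T(k_2)$ together with property (i) gives $C(x_2)^{E_{12}(x_1)}\in B_T(k_2)$; multiplying and applying property (ii) yields $C(x_1+x_2)\in B_T(k_1+k_2)$. The only non-routine point is locating the correct conjugating element, namely $E_{12}(x_1)$, which is exactly what makes the two unwanted $E_{12}$ factors cancel; once this is guessed, the remaining bookkeeping is short and the closure properties do the rest.
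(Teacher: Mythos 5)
Your proof is correct and is essentially the paper's argument in streamlined form: the paper establishes the equivalent identity $C(x)^{-1}\cdot C(y)=C(y-x)^{E_{12}(x)}$ and applies it twice (taking a detour through $C(-x_1)=[C(x_1)^{-1}]^{E_{12}(-x_1)}\in B_T(k_1)$), whereas you verify the single rearranged identity $C(x_1+x_2)=C(x_1)\cdot C(x_2)^{E_{12}(x_1)}$ and conclude directly. In both cases the core idea is identical --- the conjugator $E_{12}(x_1)$ is chosen so that the unwanted cross terms cancel, and the rest follows from conjugation-invariance and additivity of the balls $B_T(k)$.
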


\begin{proof}
First, note for $x,y\in R$ that
\begin{align*}
C(x)^{-1}\cdot C(y)&=[E_{21}(x)\cdot E_{12}(x)]^{-1}\cdot[E_{21}(y)\cdot E_{12}(y)]\\
&=E_{12}(-x)\cdot E_{21}(-x)\cdot E_{21}(y)\cdot E_{12}(y)\\
&=E_{12}(-x)\cdot E_{21}(y-x)\cdot E_{12}(y-x)\cdot E_{12}(x)\\
&=C(y-x)^{E_{12}(x)}.
\end{align*}
This equation implies in particular that $C(-x_1)=[C(x_1)^{-1}]^{E_{12}(-x_1)}\in B_T(k_1)$ and so $C(x_1+x_2)=(C(-x_1)^{-1}\cdot C(x_2))^{E_{12}(x_1)}\in B_T(k_1+k_2)$ follows.
\end{proof}

Next, we recall the definition of level ideals:

\begin{definition}
\label{central_elements_def}
Let $R$ be a commutative ring with $1$ and let 
\begin{equation*}
A=
\begin{pmatrix}
a & b\\
c & d
\end{pmatrix}
\in{\rm SL}_2(R)
\end{equation*}
be given. The \textit{level ideal $l(A)$} is defined as the ideal $(a-d,b,c)\unlhd R.$ For a subset $T\subset{\rm SL}_2(R),$ we set $l(T):=\sum_{A\in T}l(A).$
\end{definition} 

\section{The proof of Theorem~\ref{main_thm}}
\label{proof_main}

The proof of the main theorem operates by extracting increasingly more generic elements from a given collection of conjugacy classes $T$ generating ${\rm SL}_2(R)$ and use them to construct a congruence subgroup. First, we introduce the class of rings used in the investigation:

\begin{definition}\label{many_units}
Let $R$ be a commutative ring with $1$ satisfying the following two properties:
\begin{enumerate}
\item{For each $c\in R-(\{0\}\cup R^*)$ the ring $R/cR$ has (Bass) stable range $1$.}
\item{For each $x\in R-\{0\},$ there is a unit $u\in R$ such that $u^4\neq 1$ and $u^2\equiv 1\text{ mod }xR.$}
\end{enumerate}
Then we call $R$ a \textit{ring with many units.}
\end{definition}

\begin{remark}
\hfill
\begin{enumerate}
\item{A ring $R$ having stable range $1$ is defined as follows: For all $a,b\in R$ with $(a,b)=R$, there is an $x\in R$ such that 
$a+bx$ is a unit in $R.$}
\item{A ring satisfying only Definition~\ref{many_units}(i) is called a \textit{ring of stable range at most $3/2.$}}
\item{Observe that $R$ being a ring with many units is a property describable in first order terms.}
\end{enumerate}
\end{remark}

Next, we note the following important intermediate step that we will prove in the next section:

\begin{theorem}\label{technical_thm1}
Let $R$ be a ring with many units. Then there is a natural number $L_1\in\mathbb{N}$ independent of $R$ such that for each $A\in{\rm SL}_2(R)$ and $x\in l(A)$, one has 
\begin{equation*}
\{C(bx^3)\mid b\in R\}\subset B_A(L_1). 
\end{equation*}
\end{theorem}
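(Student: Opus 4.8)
The plan is to reduce the statement to a single clean computation by exploiting the additivity of self-reproducing elements. Write $x=\sum_i r_i g_i$ with $g_i$ among the three generators $a-d,\,b,\,c$ of $l(A)$, and expand $x^3$ by the multinomial theorem: for any $b\in R$ one gets $bx^3=\sum_{j=1}^{10}s_j m_j$, where the $m_j$ are the at most ten degree-three monomials in $a-d,b,c$ and the $s_j\in R$ are arbitrary (they absorb $b$, the $r_i$, and the binomial coefficients $3$ and $6$). By Lemma~\ref{tech1}, $\|C(bx^3)\|\le\sum_j\|C(s_jm_j)\|$, so it suffices to prove a uniform bound $C(s\,g_1g_2g_3)\in B_A(M)$ for every choice of three generators $g_1,g_2,g_3$ and \emph{every} $s\in R$, with $M$ independent of $R$; then $L_1\le 10M$ works. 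This isolates the real content: producing self-reproducing elements whose argument is an arbitrary ring multiple of a product of three level generators.

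The engine for this is the observation that every commutator lies close to $A$: since $[A,g]=A^{-1}\cdot A^{g}$ is a product of a conjugate of $A^{-1}$ and a conjugate of $A$, we have $[A,g]\in B_A(2)$ for all $g\in{\rm SL}_2(R)$, and more generally $[B,g]\in B_A(2k)$ for $B\in B_A(k)$. Two such commutators do all the work. First, commutation with an elementary matrix multiplies off-diagonal entries: $[M,E_{12}(s)]$ has lower-left entry equal to the square of the lower-left entry of $M$ times $s$. Second, conjugation by a self-reproducing matrix mixes the three generators; the key identity is that $C(t)^{-1}AC(t)$ has lower-left entry
\begin{align*}
c-t(a-d)-t^2b,
\end{align*}
so that as $t$ ranges over $R$ these conjugates of $A$ (which all share the level ideal $l(A)$, since $l$ is conjugation invariant) sweep out arbitrary linear combinations of $c,\,a-d,\,b$ in a single entry. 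Interleaving these two operations a bounded number of times, and invoking the many-units property~(ii) to supply a unit $u$ with $u^2\equiv1$ modulo the relevant ideal and $u^4\neq1$ — so that conjugating by $h(u)$ and forming $[A,h(u)]$ rescales entries by the controllable factor $u^2-1$ — lets one realise any prescribed product $s\,g_1g_2g_3$ as an off-diagonal entry of an element of $B_A(\mathrm{const})$ lying in the relative subgroup with entries in $l(A)^3$.

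The main obstacle is the final passage from such a matrix to a genuine self-reproducing element $C(s\,g_1g_2g_3)$, since $C(y)=E_{21}(y)E_{12}(y)$ is \emph{symmetric} in its two off-diagonal positions, whereas the commutator construction naturally controls only one entry at a time; matching both entries to the same value is exactly what forces the third power of the level ideal and is the delicate point. I expect this to be handled by the Costa--Keller self-reproducing relation $C(X)=[E(2,R),C(X)]$ together with the stable-range-one hypothesis~(i): the latter gives, over the quotients $R/cR$, a bounded elementary decomposition of ${\rm SL}_2$, which both converts the extracted matrices into bounded products of self-reproducing generators and guarantees that the resulting word-length bound is uniform over all multiples $s\in R$ and independent of the ring $R$. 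Verifying this uniformity — that a single $L_1$ captures the entire principal ideal $x^3R$ for every $A$ and every $R$ with many units — is the crux of the argument.
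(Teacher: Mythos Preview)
Your approach differs fundamentally from the paper's, and as written it is incomplete rather than wrong.

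The paper makes no attempt at an explicit construction. It first establishes (Corollary~\ref{sandwich_cor}, via the Costa--Keller sandwich classification of normal subgroups of ${\rm SL}_2$ over rings with many units) that for every such $R$, every $A$, every $x\in l(A)$ and every $b\in R$ one has $C(bx^3)\in\langle\langle A\rangle\rangle$ --- a pure existence statement with no bound whatsoever. Uniformity in $R$ is then extracted by a first-order compactness argument: one axiomatizes ``$R$ is a ring with many units'' (this is first-order, which is the whole point of Definition~\ref{many_units}) together with constants for the entries of $A$, for $t\in l(A)$, for $s\in R$, and the family of sentences $\theta_r$ asserting $C(t^3s)\notin B_{\mathcal A}(r)$. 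The existence result makes this theory inconsistent, so by G\"odel compactness finitely many $\theta_r$ already are, and the largest index among them is $L_1$. Compactness is precisely the device that transports per-instance existence into a single bound valid across all models.

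Your constructive route is not obviously doomed, but the proposal does not carry it through. The multinomial reduction to monomials $s\,g_1g_2g_3$ via Lemma~\ref{tech1} is fine, and the commutator identities you quote do place matrices with controlled off-diagonal entries into $B_A(\text{const})$. The gap is exactly where you flag it: passing from such a matrix to the \emph{specific} element $C(s\,g_1g_2g_3)$, with both off-diagonal entries equal to the same prescribed value, is not done; you write ``I expect this to be handled'' and stop. Invoking $C(X)=[E(2,R),C(X)]$ or stable range $1$ on $R/cR$ does not by itself produce a word in conjugates of $A^{\pm1}$ of length bounded independently of $R$, and the intermediate step (``interleaving these two operations a bounded number of times'') is too vague to verify. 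If an explicit argument along your lines can be completed it would yield an effective $L_1$, which the paper's compactness proof does not; but as it stands you have a strategy, not a proof.
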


Before continuing, we note the following:

\begin{proposition}\label{necessary_cond1}
Let $R$ be a commutative ring with $1$ and let $T\subset{\rm SL}_2(R)$ be a normally generating subset of ${\rm SL}_2(R).$ Then $l(T)=\sum_{A\in T}l(A)=R.$
\end{proposition}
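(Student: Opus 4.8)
The plan is to prove the contrapositive by reducing modulo the level ideal and showing that the image of the normal closure is forced to be abelian. Set $I:=l(T)$ and suppose toward a contradiction that $I\neq R$, so that $R/I$ is a nonzero ring. For each
$A=\begin{pmatrix} a & b\\ c & d\end{pmatrix}\in T$, the definition of the level ideal gives $b,c\in l(A)\subseteq I$ and $a-d\in l(A)\subseteq I$. Hence the reduction $\pi_I(A)$ has vanishing off-diagonal entries and equal diagonal entries $\bar a=\bar d$, so $\pi_I(A)=\bar a\cdot\begin{pmatrix} 1 & 0\\ 0 & 1\end{pmatrix}$ is a scalar matrix (with $\bar a^2=1$ by the determinant condition). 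Since scalar matrices lie in the center of ${\rm SL}_2(R/I)$, the whole set $\pi_I(T)$ consists of central elements.

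Next I would exploit centrality to control the image of the normal closure. Because $\dl T\dr={\rm SL}_2(R)$, applying $\pi_I$ yields $\pi_I({\rm SL}_2(R))=\pi_I(\dl T\dr)$, and the image of the normal closure of $T$ is exactly the normal closure of $\pi_I(T)$ inside the subgroup $\pi_I({\rm SL}_2(R))$. The key observation is that the normal closure of a set of central elements is simply the subgroup they generate, since conjugation fixes central elements; in particular it is abelian. Consequently $\pi_I({\rm SL}_2(R))$ would have to be abelian. Note that this argument does not require the reduction map $\pi_I$ to be surjective onto ${\rm SL}_2(R/I)$, which is what lets it work for an arbitrary commutative ring $R$.

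Finally I would derive a contradiction from the non-commutativity of elementary matrices. The group $\pi_I({\rm SL}_2(R))$ contains $\pi_I(E_{12}(1))=E_{12}(\bar 1)$ and $\pi_I(E_{21}(1))=E_{21}(\bar 1)$, and a direct multiplication shows that $E_{12}(\bar 1)\,E_{21}(\bar 1)$ and $E_{21}(\bar 1)\,E_{12}(\bar 1)$ have $(1,1)$-entries $\bar 2$ and $\bar 1$ respectively. These agree if and only if $\bar 1=0$ in $R/I$, i.e. if and only if $I=R$. Since $R/I\neq 0$, the two images do not commute, contradicting the abelianness established above; therefore $I=l(T)=R$. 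The only genuinely delicate point in the argument is the second step—ensuring that passing to the image really does replace the normal closure by the abelian subgroup generated by central elements—together with the elementary but essential fact that ${\rm SL}_2$ of any nonzero commutative ring is nonabelian; the rest is routine.
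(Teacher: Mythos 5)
Your proof is correct, and it follows essentially the same route as the argument the paper points to (the analogue of \cite[Lemma~3.3]{General_strong_bound}): reduce modulo an ideal containing the level ideal, observe that $T$ maps to scalar, hence central, matrices, so the image of $\dl T\dr$ is the abelian subgroup generated by these scalars, contradicting the fact that $\pi_I({\rm SL}_2(R))$ contains the non-commuting elements $E_{12}(\bar 1)$ and $E_{21}(\bar 1)$. The only (harmless) variation is that you reduce modulo $l(T)$ itself rather than a maximal ideal containing it, and your explicit handling of the possible non-surjectivity of $\pi_I$ is exactly the right care to take for an arbitrary commutative ring.
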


We will omit a proof, because it is virtually identical to the proof of \cite[Lemma~3.3]{General_strong_bound} and rather straightforward. 

\begin{remark}\label{Pi_generators}
For later use, we remark that the set of non-zero prime ideals $\C P$ with $\pi_{\C P}(T)$ consisting only of scalar matrices in ${\rm SL}_2(R/\C P)$ is denoted by $\Pi(T)$ and that $\Pi(T)=\emptyset$ is equivalent to $\sum_{A\in T}l(A)=R.$ 
\end{remark}

Next, we note the following:

\begin{theorem}\label{technical_thm2}
Let $R$ be a commutative ring with $1$ and a unit $u\in R$. Then there is a natural number $L_2$ independent of $R$ and $u$ such that
$\{E_{12}(x\cdot (u^4-1))\mid x\in R\}\subset B_{C(1)}(L_2).$
\end{theorem}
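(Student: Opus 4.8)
The plan is to realize every $E_{12}(x(u^4-1))$ as a product of a number of ${\rm SL}_2(R)$-conjugates of $C(1)^{\pm1}$ that is bounded independently of $R$, $u$ and $x$. The starting point is the action of the torus element $h(u)$ on root elements, namely $E_{12}(y)^{h(u)}=E_{12}(u^{-2}y)$ and $E_{21}(y)^{h(u)}=E_{21}(u^{2}y)$, so that
\[
C(1)^{h(u)}=h(u)^{-1}E_{21}(1)E_{12}(1)h(u)=E_{21}(u^{2})E_{12}(u^{-2}).
\]
Since $B_{C(1)}(k)$ is closed under conjugation by ${\rm SL}_2(R)$ (conjugating a product of conjugates of $C(1)^{\pm1}$ again yields such a product), every conjugate of $C(1)$ lies in $B_{C(1)}(1)$, and the identity $[g,C(1)]=(C(1)^{g})^{-1}C(1)\in B_{C(1)}(2)$ shows that all such commutators, for arbitrary $g\in{\rm SL}_2(R)$, are available at bounded cost. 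The element we are ultimately after is governed by $h(u^{2})E_{12}(x)h(u^{2})^{-1}E_{12}(-x)=E_{12}((u^{4}-1)x)$, so the whole problem is to reproduce this commutator using only conjugates of $C(1)$.

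The factor $u^{4}-1$ (rather than $u^{2}-1$) is dictated by the scaling mismatch visible in $C(1)^{h(u)}$: conjugation by $h(u)$ multiplies the $E_{12}$-part by $u^{-2}$ but the $E_{21}$-part by $u^{2}$. To produce a pure upper-triangular element one must cancel all $E_{21}$-content, and the only way to do so from $h(u)$-conjugates forces a $u^{2}$-scaled factor to meet a $u^{-2}$-scaled factor, leaving an $E_{12}$-coefficient proportional to $u^{2}-u^{-2}=(u^{4}-1)u^{-2}$. This is already visible in the clean computation
\[
C(u^{2})^{-1}\,C(1)^{h(u)}=E_{12}(-u^{2})E_{21}(-u^{2})E_{21}(u^{2})E_{12}(u^{-2})=E_{12}(u^{-2}-u^{2}).
\]
The reason more than two factors are needed is that $C(u^{2})$ is not itself a conjugate of $C(1)$ (its trace is $u^{4}+2\neq3$), so one cannot stop here; instead one replaces $C(u^{2})$ by a bounded product of $C(1)$-conjugates chosen so that the surplus lower-triangular content telescopes away.

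To keep the free parameter $x$ I would bake it into the conjugating elements rather than scale afterwards: post-hoc conjugation by $h(v)$ only multiplies the $E_{12}$-argument by the unit square $v^{-2}$, and in ${\rm SL}_2$ there is no higher-rank commutator formula $E_{12}(ab)=[E_{13}(a),E_{32}(b)]$ with which to multiply by an arbitrary ring element. Concretely, conjugating $C(1)$ first by $E_{21}(x)$ and then by $h(u)$ gives
\[
C(1)^{E_{21}(x)h(u)}=\begin{pmatrix}1+x & u^{-2}\\ u^{2}(1+x-x^{2}) & 2-x\end{pmatrix}\in B_{C(1)}(1),
\]
whose off-diagonal entries still carry the clean scalings $u^{\pm2}$ while $x$ enters the diagonal linearly. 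Pairing this element (and its $u\mapsto u^{-1}$ analogue) against suitable further conjugates of $C(1)^{\pm1}$, one produces a bounded product in which the diagonal and the $E_{21}$-entry cancel and the surviving $E_{12}$-coefficient equals $(u^{4}-1)\,w\,x$ for a fixed unit $w$ (a power of $u$). Since $x\mapsto wx$ is a bijection of $R$, this already yields $\{E_{12}(x(u^{4}-1))\mid x\in R\}\subset B_{C(1)}(L_2)$ with $L_2$ the fixed number of factors used.

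The main obstacle is exactly this telescoping step: arranging the bounded product so that every lower-triangular contribution cancels while the linear-in-$x$ upper-triangular term survives with coefficient a unit multiple of $u^{4}-1$. Because $L_2$ must be independent of $R$ and $u$, the cancellation has to be exhibited as a formal word identity valid over $\mathbb{Z}[u,u^{-1},x]$ (equivalently, in ${\rm SL}_2$ of the universal ring), and its delicacy stems from the absence of the usual higher-rank multiplication trick, so the relations must be produced by hand from the torus-scaling identities above. Note that Lemma~\ref{tech1} does not help with the parameter: its additivity only yields integer multiples of a fixed argument with growing length, which is why $x$ must be threaded through the conjugating elements instead.
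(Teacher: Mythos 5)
Your reduction of the theorem to a formal word identity over $\mathbb{Z}[u,u^{-1},x]$ is sound, and it is in fact equivalent to what the paper extracts from compactness: a bound $L_2$ valid for all $R$, $u$, $x$ exists if and only if $E_{12}(x(u^{4}-1))$ is a product of boundedly many ${\rm SL}_2$-conjugates of $C(1)^{\pm1}$ in ${\rm SL}_2(\mathbb{Z}[u,u^{-1},x])$, since any such identity specializes along ring homomorphisms. Your preliminary computations are also correct: $C(1)^{h(u)}=E_{21}(u^{2})E_{12}(u^{-2})$, the evaluation $C(u^{2})^{-1}C(1)^{h(u)}=E_{12}(u^{-2}-u^{2})$, and the matrix form of $C(1)^{E_{21}(x)h(u)}$ all check out. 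But there is a genuine gap exactly where you flag ``the main obstacle'': the claim that pairing these elements against ``suitable further conjugates of $C(1)^{\pm1}$'' yields a bounded product whose lower-triangular and diagonal content cancels, leaving $E_{12}((u^{4}-1)wx)$, is asserted and never carried out. This is not a routine verification to be left to the reader --- it is the entire mathematical content of the theorem. Your own two-factor computation only produces a pure $E_{12}$-element by spending the factor $C(u^{2})$, which, as you yourself observe, is not a conjugate of $C(1)$; no substitute identity is exhibited, and producing one amounts to reproving, in the universal ring, the key lemma of the Carter--Keller--Paige/Morris machinery, namely \cite[Lemma~6.5]{MR2357719}: for every commutative ring $R$ and unit $u$, the relative elementary subgroup $E(2,R,(u^{4}-1)R)$ is contained in the normal closure $\dl C(1)\dr$. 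That lemma is a substantial argument in its own right, not a short telescoping trick.

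For comparison, the paper does not attempt the explicit construction at all: it cites \cite[Lemma~6.5]{MR2357719} for the qualitative containment $E_{12}(x(u^{4}-1))\in\dl C(1)\dr$ in an arbitrary commutative ring, and then runs a model-theoretic compactness argument to convert ``finite product of conjugates in every ring'' into a uniform length bound $L_2$. Your universal-ring observation plays precisely the role of that compactness step (and is arguably cleaner here, since, unlike in Theorem~\ref{technical_thm1}, no ``many units'' hypothesis needs to be encoded); so if you replace the unproven cancellation step by an appeal to \cite[Lemma~6.5]{MR2357719} applied to $R_0=\mathbb{Z}[u,u^{-1},x]$, your argument closes up into a complete proof. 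As written, however, the central identity is conjectured rather than proven, so the proposal does not establish the theorem.
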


Again, we postpone the proof until the next section. From this, we can deduce:

\begin{theorem}\label{technical_thm3}
Let $R$ be a ring with many units and $T\subset{\rm SL}_2(R)$ finite with $\sum_{A\in T}l(A)=R.$ Then there is a unit $u\in R$ with $u^4\neq 1$ independent of $T$ and a natural number $L_3$ independent of $R,u$ and $T$ such that $\{E_{12}(x(u^4-1))\mid x\in R\}\subset B_T(L_3\cdot |T|).$
\end{theorem}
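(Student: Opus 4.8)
The plan is to reduce everything to exhibiting the single self-reproducing element $C(1)$ as a short product of conjugates of elements of $T$, and then to invoke Theorem~\ref{technical_thm2}. First I fix, once and for all, a unit $u\in R$ with $u^4\neq 1$; such a unit exists because $R$ is a ring with many units (apply Definition~\ref{many_units}(ii) with $x=1$), and this choice depends only on $R$, not on $T$. I then record the bookkeeping fact that each ball $B_T(k)$ is closed under conjugation and inversion, so that if $C(1)\in B_T(k)$ then every conjugate of $C(1)^{\pm 1}$ lies in $B_T(k)$, whence $B_{C(1)}(L_2)\subseteq B_T(L_2 k)$ by concatenating products. Granting $C(1)\in B_T(k)$ with $k$ linear in $|T|$, Theorem~\ref{technical_thm2} gives $\{E_{12}(x(u^4-1))\mid x\in R\}\subseteq B_{C(1)}(L_2)\subseteq B_T(L_2k)$, which is exactly the assertion.

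So the crux is to build $C(1)$. For $A\in T$ with diagonal entries $a,d$ and off-diagonal entries $b,c$, the level ideal $l(A)=(a-d,b,c)$ is generated by three elements $g_{A,1},g_{A,2},g_{A,3}\in l(A)$. I would consider the ideal $\hat J_A:=(g_{A,1}^3,g_{A,2}^3,g_{A,3}^3)$ generated by the cubes of these three generators, and claim $\sum_{A\in T}\hat J_A=R$. To see this I argue locally: if the sum were proper it would lie in some maximal ideal $\C M$; but $\sum_{A\in T}l(A)=R$ by hypothesis, so some $l(A)\not\subseteq\C M$, hence some generator $g_{A,j}\notin\C M$, and then $g_{A,j}^3\notin\C M$, contradicting $\hat J_A\subseteq\C M$. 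Thus $1\in\sum_A\hat J_A$, i.e.\ there are $b_{A,j}\in R$ with $1=\sum_{A\in T}\sum_{j=1}^{3}b_{A,j}\,g_{A,j}^3$, an expression of $1$ as a sum of at most $3|T|$ terms, each of the shape $b\,x^3$ with $x\in l(A)$ for some $A\in T$.

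Now each such term is handled by Theorem~\ref{technical_thm1}: since $g_{A,j}\in l(A)$, one has $C(b_{A,j}g_{A,j}^3)\in B_A(L_1)\subseteq B_T(L_1)$. Adding these via Lemma~\ref{tech1}, where the $C$-arguments add while the ball radii add, yields $C(1)=C\bigl(\sum_{A,j}b_{A,j}g_{A,j}^3\bigr)\in B_T(3|T|L_1)$. Feeding $k=3L_1|T|$ into the first paragraph gives $\{E_{12}(x(u^4-1))\mid x\in R\}\subseteq B_T(3L_1L_2|T|)$, so $L_3:=3L_1L_2$ works, and it is independent of $R,u,T$ since $L_1,L_2$ are.

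The step I expect to require the most care is the length control in the decomposition of $1$: it is essential that the number of cube-terms grows linearly in $|T|$ rather than cubically, as it would if one naively expanded $1=(\sum_A x_A)^3$, because the cross terms $x_A^2 x_B$ are not of the form $b\,x^3$ with $x$ in a single level ideal. Using the three fixed generators of each $l(A)$ together with the cube ideals $\hat J_A$ sidesteps this by keeping exactly three cubes per element of $T$; the accompanying comaximality $\sum_A\hat J_A=R$ is the one genuinely ideal-theoretic input, and it holds in any commutative ring by the local argument above. The remaining care is purely clerical tracking of ball radii through Lemma~\ref{tech1} and through the passage $B_{C(1)}(L_2)\subseteq B_T(L_2k)$.
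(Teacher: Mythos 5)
Your proof is correct and takes essentially the same route as the paper's: express $1$ as a sum of terms $b\,x^3$ with $x$ in the level ideals (justified by the prime/maximal-ideal argument that cubing generators does not change comaximality), apply Theorem~\ref{technical_thm1} and Lemma~\ref{tech1} to get $C(1)$ in a ball of radius linear in $|T|$, and finish with Theorem~\ref{technical_thm2} together with the closure of $B_T(k)$ under conjugation and inversion. The only difference is clerical: the paper picks a single $x_A\in l(A)$ for each $A$ from a representation $1=\sum_A x_A$ and uses $(x_A^3\mid A\in T)=R$, giving $L_3=L_1L_2$, whereas your use of the three generators of each $l(A)$ gives the slightly larger (but equally valid) constant $L_3=3L_1L_2$.
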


\begin{proof}
Observe that as $T$ has the property $\sum_{A\in T}l(A)=R$, there are elements $x_A\in l(A)$ for $A\in T$ with $(x_A\mid A\in T)=R.$ So in particular, we have 
$(x_A^3\mid A\in T)=R$. Thus for each $A\in T$ there are $b_A\in R$ with $1=\sum_{A\in T} b_A\cdot x_A^3.$ Observe that Theorem~\ref{technical_thm1} then implies that
$C(b_A\cdot x_A^3)\in B_A(L_1)\subset B_T(L_1)$ for each $A\in T.$ Hence using Lemma~\ref{tech1} repeatedly, we obtain 
\begin{equation*}
C(1)=C(\sum_{A\in T} b_A\cdot x_A^3)\in B_T(L_1\cdot |T|).
\end{equation*} 
But $R$ has many units and so there is a unit $u\in R$ with $u^4\neq 1.$ But then Theorem~\ref{technical_thm2} implies that 
\begin{equation*}
\{E_{12}(x\cdot (u^4-1))\mid x\in R\}\subset B_{C(1)}(L_2)\subset B_T(L_1\cdot L_2\cdot |T|)
\end{equation*}
and so setting $L_3:=L_1\cdot L_2$, we are done.
\end{proof}

Next, we note the following:

\begin{lemma}\label{integers_many_units}
Let $R=\C O_S$ be the ring of S-algebraic integers in a number field $K$ such that $R$ has infinitely many units. Then $R$ is a ring with many units.
\end{lemma}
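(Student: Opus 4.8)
The plan is to verify the two conditions of Definition~\ref{many_units} for $R=\C O_S$ separately, using that $R$ is a Dedekind domain all of whose nonzero proper quotients are finite, together with Dirichlet's unit theorem.

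For condition (i), fix $c\in R-(\{0\}\cup R^*)$. Since $R$ is a Dedekind domain, the nonzero proper ideal $cR$ factors as $cR=\C P_1^{e_1}\cdots\C P_m^{e_m}$ for distinct nonzero prime ideals $\C P_i$, and the residue fields $R/\C P_i$ are finite. By the Chinese Remainder Theorem one gets a ring isomorphism
\begin{equation*}
R/cR\cong\prod_{i=1}^m R/\C P_i^{e_i},
\end{equation*}
where each factor $R/\C P_i^{e_i}$ is a finite local ring. A local ring has stable range $1$: if $(a,b)=R$, then at least one of $a,b$ is a unit, and if $b$ is a unit while $a$ lies in the maximal ideal, then $a+b=b(1+b^{-1}a)$ is again a unit. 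Moreover a finite direct product of rings of stable range $1$ again has stable range $1$. Hence $R/cR$ has stable range $1$, which is condition (i).

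For condition (ii) I would use that $R$ has infinitely many units. By Dirichlet's $S$-unit theorem the group $R^*$ is finitely generated with finite torsion subgroup (the roots of unity in $K$), so having infinitely many units forces $R^*$ to have positive rank; fix a unit $\epsilon\in R^*$ of infinite order. Now let $x\in R-\{0\}$ be given. Then $xR$ is a nonzero ideal, so $R/xR$ is finite and its unit group $(R/xR)^*$ is a finite group; write $N:=|(R/xR)^*|\geq 1$. The image of $\epsilon$ under the quotient map $R\to R/xR$ is a unit, so Lagrange's theorem gives $\epsilon^N\equiv 1\bmod xR$. Setting $u:=\epsilon^N\in R^*$ we obtain $u^2\equiv 1\bmod xR$, while $u^4=\epsilon^{4N}\neq 1$ because $\epsilon$ has infinite order and $N\geq 1$. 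This is exactly condition (ii), so $R$ is a ring with many units.

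Since both verifications are essentially standard, I expect no serious obstacle; the only point requiring a little care is the stable-range claim in (i), which is why I reduce it through the Chinese Remainder Theorem to the local case, where the argument is immediate. The role of the hypothesis "infinitely many units" is confined to condition (ii), where it supplies the infinite-order unit $\epsilon$ needed to produce a $u$ with $u^4\neq 1$ and $u^2\equiv 1\bmod xR$ simultaneously.
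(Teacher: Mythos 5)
Your proof is correct and follows essentially the same route as the paper: both parts hinge on the finiteness of $R/cR$ (giving stable range $1$) and on Dirichlet's unit theorem supplying a unit of infinite order whose suitable power is congruent to $1$ modulo $xR$. The only difference is cosmetic: where the paper cites Bass (finite implies semi-local implies stable range $1$), you prove the stable-range claim by hand via the Chinese Remainder Theorem and the local case, and you invoke Lagrange's theorem in $(R/xR)^*$ where the paper just uses that a unit of a finite ring has finite multiplicative order.
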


\begin{proof}
Rings with many units are defined by two properties so showing both properties is enough to prove the lemma. First, to show that $R$ has stable range at most $3/2$ let 
$c\in R-(\{0\}\cup R^*)$ be given. Then the ring $R/cR$ is finite, so in particular it is semi-local and hence has stable range $1$ according to \cite[Lemma~6.4, Corollary~6.5]{MR0174604}. For the second property, let $x\in R-(\{0\}\cup R^*)$ be given. According to Dirichlet's Unit Theorem \cite[Corollary~11.7]{MR1697859}, there is a unit $v\in R$ of infinite order. But observe that $R/xR$ is a finite ring, so there must be a non-trivial power $v^k$ of $v$ such that $v^k+xR=1+xR.$ Setting 
$u:=v^k$, we obtain then that $u^4\neq 1$ and $u^2\equiv 1\text{ mod }xR.$ 
\end{proof}

To finish the proof of Theorem~\ref{main_thm}, we also need

\begin{lemma}
\label{congruence_fin}
Let $R=\C O_S$ be the ring of S-algebraic integers in a number field $K$ such that $R$ has infinitely many units. Further, let $I\unlhd R$ be a non-trivial ideal in $R.$ Also define $Q_I:=\{E_{12}(x)\mid x\in I\}$ and $N_I:=\dl Q_I\dr$ and let $\|\cdot\|_{Q_I}:N_I\to\mathbb{N}\cup\{+\infty\}$ be the conjugation invariant word norm on $N_I$ defined by $Q_I.$  
\begin{enumerate}
\item{
Then the group ${\rm SL}_2(R)/N_I$ is finite.}
\item{Then there is a $K(I,R)\in\mathbb{N}$ such that $\|N_I\|_{Q_I}\leq K(I,R)$.}
\end{enumerate}
\end{lemma}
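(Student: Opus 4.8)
The plan is to deduce both statements from the existence of a principal congruence subgroup inside $E(2,R,I)$. I would first record that $N_I\supseteq E(2,R,I)$: conjugating a generator $E_{12}(x)$, $x\in I$, by the Weyl element $w=\begin{pmatrix}0&-1\\1&0\end{pmatrix}\in{\rm SL}_2(R)$ gives $wE_{12}(x)w^{-1}=E_{21}(-x)$, so the normal subgroup $N_I$ contains $E_{12}(I)$, $E_{21}(I)$ and hence their normal closure $E(2,R,I)$. Since ${\rm SL}_2(\C O_S)=E(2,\C O_S)$ for rings with infinitely many units (the generation result underlying \cite{MR2357719}), $E(2,R,I)$ is a normal subgroup of ${\rm SL}_2(R)$, and it is non-central because $I\neq 0$ provides an $x\neq 0$ with $E_{12}(x)\in E(2,R,I)$. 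By the congruence subgroup property for ${\rm SL}_2(\C O_S)$ with infinitely many units (Serre \cite{MR272790}, together with the normal-subgroup description of Costa and Keller \cite{MR1114610}), every non-central normal subgroup contains a principal congruence subgroup; thus there is a non-zero ideal $J\unlhd R$ with ${\rm SL}_2(R,J)\subseteq E(2,R,I)$. Consequently ${\rm SL}_2(R)/E(2,R,I)$ is finite, being a quotient of ${\rm SL}_2(R)/{\rm SL}_2(R,J)$, which embeds into the finite group ${\rm SL}_2(R/J)$ via $\pi_J$. The finiteness of ${\rm SL}_2(R)/N_I$ now follows at once, since it is a further quotient of ${\rm SL}_2(R)/E(2,R,I)$.

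For the norm bound, recall that $\|\cdot\|_{Q_I}$ counts the ${\rm SL}_2(R)$-conjugates of $Q_I^{\pm 1}$ needed to write an element. Two observations prepare a reduction: for $a\in I$, every ${\rm SL}_2(R)$-conjugate of $E_{12}(a)$ has $Q_I$-norm $1$ by definition, and $E_{21}(a)=wE_{12}(-a)w^{-1}$ shows the same for every conjugate of $E_{21}(a)$. By the first paragraph the quotient $N_I/E(2,R,I)$ is finite. I would fix a finite set $\mathcal{R}\subseteq N_I$ of representatives for it; each $r\in\mathcal{R}$ lies in $N_I=\dl Q_I\dr$ and is therefore a finite product of conjugates of $Q_I^{\pm 1}$, so $K_0:=\max_{r\in\mathcal{R}}\|r\|_{Q_I}$ is finite. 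Writing an arbitrary $g\in N_I$ as $g=rc$ with $r\in\mathcal{R}$ and $c\in E(2,R,I)$ then reduces everything to a uniform bound on $\|c\|_{Q_I}$ for $c\in E(2,R,I)$.

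The heart of the matter, and the step I expect to be the main obstacle, is the resulting bounded relative generation statement: there should be a constant $M=M(I,R)$ so that every $c\in E(2,R,I)$ is a product of at most $M$ factors, each an ${\rm SL}_2(R)$-conjugate of some $E_{12}(a)$ or $E_{21}(a)$ with $a\in I$, and hence of $Q_I$-norm at most $1$. I would obtain this from the relative form of the bounded generation theorem of Carter, Keller and Paige \cite{MR2357719}, available here because $R$ has infinitely many units and, being a ring with many units (Lemma~\ref{integers_many_units}), satisfies the stable-range hypothesis controlling reduction by elementary matrices. The subtle point is that absolute bounded generation only expresses a congruence element as a bounded product of arbitrary elementaries; one must then trade these for level-$I$ elementary conjugates at uniformly bounded cost, and it is here that the arithmetic of $R$ genuinely intervenes. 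Granting $M$, every $c\in E(2,R,I)$ satisfies $\|c\|_{Q_I}\leq M$, so $\|g\|_{Q_I}\leq K_0+M=:K(I,R)$ for all $g\in N_I$, which is the desired bound.
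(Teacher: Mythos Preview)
Your proposal is correct and follows essentially the same route as the paper, which dispatches the lemma in one line by citing absolute bounded generation \cite[Theorem~1.2]{MR2357719} and pointing to the analogous \cite[Lemma~3.4]{General_strong_bound}. You have correctly unpacked the two ingredients that sit behind that reference: a congruence subgroup inside $N_I$ to get finiteness, and relative bounded elementary generation to bound $\|\cdot\|_{Q_I}$.

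Two remarks. First, a simplification: since here ${\rm SL}_2(R)=E(2,R)$, your containment $N_I\supseteq E(2,R,I)$ is actually an equality $N_I=E(2,R,I)$ straight from the definitions, so the coset-representative layer with the constant $K_0$ is superfluous (though harmless). Second, your instinct that the ``subtle point'' is the passage from absolute to relative bounded generation is exactly right, and this is precisely where the paper leans on \cite{MR2357719}: Morris's compactness/first-order treatment of Carter--Keller--Paige yields bounded generation of $E(2,R,I)$ by ${\rm SL}_2(R)$-conjugates of level-$I$ elementaries, not merely the absolute statement, so the constant $M(I,R)$ you need is genuinely available and you do not have to manufacture it by hand from the absolute bound.
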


\begin{proof}
First, note that as $R$ is a ring of S-algebraic integers with infinitely many units, the group ${\rm SL}_2(R)$ is boundedly generated by elementary matrices according to \cite[Theorem~1.2]{MR2357719}. But then the lemma follows in essentially the same way as \cite[Lemma~3.4]{General_strong_bound} by replacing the ideal $2R$ by the ideal $I$.
\end{proof}

We have accumulated the tools to prove Theorem~\ref{main_thm} now:

\begin{proof}
According to Lemma~\ref{integers_many_units}, the ring $R$ of S-algebraic integers is a ring with many units. Let $T$ be a finite, normally generating subset of 
${\rm SL}_2(R).$ Then according to Lemma~\ref{necessary_cond1}, we have $\sum_{A\in T}l(A)=R$. So Theorem~\ref{technical_thm3} implies that there is a unit 
$u\in R$ independent of $T$ with $u^4\neq 1$ and 
\begin{equation*}
\{E_{12}(x(u^4-1))\mid x\in R\}\subset B_T(L_3\cdot |T|)
\end{equation*}
with $L_3$ independent of $R,u$ and $T.$ Define $I:=(u^4-1)\unlhd R$ and consider the group $G:={\rm SL}_2(R)/N_I$ together with the quotient map 
$\pi:{\rm SL}_2(R)\to G.$ Then consider the set 
\begin{equation*}
E(G):=\{\bar{S}\subset G\mid \bar{S}\text{ normally generates }G\}.
\end{equation*}
But according to Lemma~\ref{congruence_fin}(i), the group $G$ is finite and hence so is $E(G).$ Thus there is a constant $M:=M(I)\in\mathbb{N}$ such that for each 
$S\subset{\rm SL}_2(R)$ with $\pi(S)\in E(G)$ and each $A\in{\rm SL}_2(R)$, there are $s_1,\dots,s_{M(I)}\in S\cup S^{-1}\cup\{I_2\}$ and 
$X_1,\dots,X_{M(I)}\in{\rm SL}_2(R)$ with 
\begin{equation*}
\pi(A)=\pi\left(\prod_{j=1}^{M(I)} s_j^{X_j}\right).
\end{equation*}  
Hence $A(\prod_{j=1}^{M(I)} s_j^{X_j})^{-1}\in N_I$ and so setting $\|N_I\|_T:={\rm sup}\{\|g\|_T|g\in N\}\in\mathbb{N}\cup\{+\infty\}$ implies 
\begin{equation*}
\|A\|_T\leq\|\prod_{j=1}^{M(I)} s_j^{X_j}\|_T+\|N_I\|_T\leq M(I)\cdot\max\{\|s\|_T|\ s\in S\}+\|N_I\|_T.
\end{equation*}
This implies $\|{\rm SL}_2(R)\|_T\leq M(I)\max\{\|s\|_T|\ s\in S\}+\|N_I\|_T$ for all $S\subset{\rm SL}_2(R)$ with $\pi(S)\in E(G).$
Next, note that as $T$ normally generates ${\rm SL}_2(R)$, we obtain $\pi(T)\in E(G)$ and for $t\in T$, one clearly has $\|t\|_T\leq 1$ and thus $\|{\rm SL}_2(R)\|_T\leq M(I)+\|N_I\|_T.$ But $M(I)$ does not not depend on $T$ and hence it suffices to bound $\|N_I\|_T$ linearly in $|T|$ to finish the proof of Theorem~\ref{main_thm}. But note that according to Theorem~\ref{technical_thm3}, there is an $L_3\in\mathbb{N}$ such that for all $q\in Q_I$, one has $\|q\|_T\leq L_3|T|.$ Further by Lemma~\ref{congruence_fin}(ii), there is a $K(I,R)\in\mathbb{N}$ such that $\|N_I\|_{Q_I}\leq K(I,R)$. This implies $\|N_I\|_T\leq K(I,R)L_3|T|$ and this finishes the proof.
\end{proof}

Next, we want to derive a criterion for a subset $T$ to normally generate ${\rm SL}_2(R)$ from the previous discussion. First, let $T$ be a subset of ${\rm SL}_2(R)$ with $\Pi(T)=\emptyset$ and let $N$ be the normal subgroup generated by $T.$ Note that we derived in the proof of Theorem~\ref{technical_thm3} that $C(1)$ is an element of $N.$ However, the argument can equally well be used to derive that $\{C(x)\mid x\in R\}$ is a subset of $N.$ However, note that as $C(x)=E_{21}(x)E_{12}(x)$, this implies that $E_{21}(-x)N=E_{12}(x)N$ holds in ${\rm SL}_2(R)/N$ for all $x\in R.$ But ${\rm SL}_2(R)$ is generated by the set ${\rm EL}$ of elementary matrices. Hence ${\rm SL}_2(R)/N$ is in fact an abelian group. Thus $T$ normally generates ${\rm SL}_2(R)$ if it has the additional property that $T$ maps to a generating set of the abelianization $H_1({\rm SL_2}(R)).$ But clearly if on the other hand $T$ is a normally generating subset of ${\rm SL}_2(R),$ then $\Pi(T)=\emptyset$ and $T$ mapping onto a generating subset of $H_1({\rm SL}_2(R))$ must hold. Thus we have proven:

\begin{corollary}\label{normal_gen_criterion}
Let $R=\C O_S$ be the ring of S-algebraic integers of a number field $K$ such that $R$ has infinitely many units and let $T\subset{\rm SL}_2(R)$ be given. Then $T$ normally generates ${\rm SL}_2(R)$ if and only if $\Pi(T)=\emptyset$ and $T$ maps onto a generating set of the abelianization of ${\rm SL}_2(R).$
\end{corollary}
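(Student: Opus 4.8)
The statement is a biconditional, and I would treat the two implications separately; the substance is concentrated entirely in the sufficiency direction, while necessity is essentially formal. Throughout I would write $N:=\dl T\dr$ for the normal closure of $T$, $G':={\rm SL}_2(R)/N$ for the associated quotient, and $\pi\colon{\rm SL}_2(R)\to G'$ for the projection.

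For necessity, suppose $T$ normally generates ${\rm SL}_2(R)$, so $N={\rm SL}_2(R)$. Proposition~\ref{necessary_cond1} immediately gives $\sum_{A\in T}l(A)=R$, which by Remark~\ref{Pi_generators} is exactly $\Pi(T)=\emptyset$. For the abelianization condition I would compose with the abelianization map $\phi\colon{\rm SL}_2(R)\to H_1({\rm SL}_2(R))$; since the target is abelian, $\phi(\dl T\dr)$ equals the ordinary subgroup generated by $\phi(T)$. As $\dl T\dr={\rm SL}_2(R)$ and $\phi$ is surjective, this subgroup is all of $H_1({\rm SL}_2(R))$, so $T$ maps onto a generating set, as required.

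The sufficiency direction is where the self-reproducing machinery enters. Assuming $\Pi(T)=\emptyset$, equivalently $\sum_{A\in T}l(A)=R$, my first goal is to show $\{C(x)\mid x\in R\}\subset N$. Choosing $x_A\in l(A)$ with $(x_A\mid A\in T)=R$ forces $(x_A^3\mid A\in T)=R$, so any fixed $x\in R$ can be written $x=\sum_{A\in T}b_Ax_A^3$ with suitable $b_A\in R$; by Theorem~\ref{technical_thm1} each $C(b_Ax_A^3)$ lies in $B_A(L_1)\subset N$, and Lemma~\ref{tech1} assembles these into $C(x)=C(\sum_{A}b_Ax_A^3)\in N$. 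This is precisely the argument used in the proof of Theorem~\ref{technical_thm3} to produce $C(1)$, now run with an arbitrary target $x$. Writing $C(x)=E_{21}(x)E_{12}(x)$, membership $C(x)\in N$ forces $\pi(E_{21}(x))=\pi(E_{12}(-x))$ for all $x\in R$. Since ${\rm SL}_2(R)$ is generated by ${\rm EL}$, the quotient $G'$ is thus generated by the images $\pi(E_{12}(x))$ alone; these pairwise commute because the $E_{12}(x)$ do, so $G'$ is abelian.

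Being abelian, $G'$ is a quotient of $H_1({\rm SL}_2(R))$ via a surjection $\bar\pi\colon H_1({\rm SL}_2(R))\to G'$. By hypothesis the image of $T$ generates $H_1({\rm SL}_2(R))$, but $T\subset N$ forces each such generator to map to the identity in $G'$ under $\bar\pi$; surjectivity of $\bar\pi$ then makes $G'$ trivial, i.e. $N={\rm SL}_2(R)$, so $T$ normally generates. The main obstacle, and the only step relying on the arithmetic hypotheses on $R$, is establishing $\{C(x)\mid x\in R\}\subset N$ together with the resulting abelianity of $G'$; everything downstream is the formal bookkeeping of the two universal properties of the abelianization.
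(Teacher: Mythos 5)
Your proposal is correct and follows essentially the same route as the paper: necessity via Proposition~\ref{necessary_cond1} and the universal property of the abelianization, and sufficiency by running the argument from the proof of Theorem~\ref{technical_thm3} (Theorem~\ref{technical_thm1} plus Lemma~\ref{tech1}) with an arbitrary target $x$ to get $\{C(x)\mid x\in R\}\subset\dl T\dr$, concluding that the quotient is abelian and hence trivial. Your write-up merely makes explicit the final bookkeeping with $\bar\pi$ that the paper leaves implicit.
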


\section{Compactness arguments, radices and the proofs of Theorem~\ref{technical_thm1} and Theorem~\ref{technical_thm2}}

The strategy to prove Theorem~\ref{technical_thm1} is to interpret it as a consequence of the inconsistency of a certain first order theory. In order to do this we need some more terminology. 

\begin{remark}
Much of the terminology and the main statements in this subsection are due to the paper \cite{MR1114610} by Costa and Keller and we highly encourage to read it.
\end{remark}

First, we introduce the concept of radices defined in \cite{MR1114610}:

\begin{definition}
Let $R$ be a commutative ring with $1$ and $P\subset R$ a subgroup of $(R,+).$ Then $P$ is called a \textit{radix}, if it satisfies the following two conditions:
\begin{enumerate}
\item{$(a^3-a)x^2+a^2x$ is an element of $P$ for all $x\in P$ and $a\in R.$}
\item{$ax^3$ is an element of $P$ for all $x\in P$ and $a\in R.$}
\end{enumerate}
\end{definition}

Next, we define a special map called $\rho:$

\begin{definition}\label{rho_def}
Let $R$ be a commutative ring with $1$. Then the map $\rho:{\rm SL}_2(R)\to R$ is defined by
\begin{equation*}
\rho
\begin{pmatrix}
a & b\\
c & d
\end{pmatrix}
=a^2-1+ab.
\end{equation*}
Similarly, the maps $\rho_T,\rho_{-1}$ and $\rho_{-T}:{\rm SL}_2(R)\to R$ are defined as follows for $A\in{\rm SL}_2(R)$:
\begin{enumerate}
\item{$\rho_T(A):=\rho(A^T),$}
\item{$\rho_{-1}(A):=\rho(A^{-1})$ and}
\item{$\rho_{-T}(A):=\rho(A^{-T}).$}
\end{enumerate}
\end{definition}

We also define the following ideal ${\rm vn}_2(R),$ the so-called \textit{booleanizing ideal of $R$}:

\begin{definition}
Let $R$ be a commutative ring with $1.$ Then ${\rm vn}_2(R)$ is defined as the ideal
\begin{equation*}
(x^2-x\mid x\in R).
\end{equation*}
\end{definition}

Using the map $\rho$ and the ideal ${\rm vn}_2(R)$, one can define the following subgroups:

\begin{definition}
Let $R$ be a commutative ring with $1$ and $P\subset R$ a radix. Then the subgroup $G(P)$ of ${\rm SL}_2(R)$ is defined as:
\begin{align*}
G(P):=
\{
A=
\begin{pmatrix}
a & b\\
c & d
\end{pmatrix}\in{\rm SL}_2(R)
\mid &\rho(A),\rho_T(A),\rho_{-1}(A),\rho_{-T}(A)\in P\text{ and}\\
&(a^2-1)\cdot{\rm vn}_2(R)+(d^2-1)\cdot{\rm vn}_2(R)\subset P
\}.
\end{align*}
Further, for $J\unlhd R$ a non-trivial ideal and a subgroup $U$ of $(R/J)^*$, slightly abusing notation, the subgroup $G(J,U)$ is defined as
\begin{equation*}
G(J,U):=\{A\in{\rm SL}_2(R)\mid A\equiv uI_2\text{ mod }J\text{ for some }u\in U\}
\end{equation*}
Last, define $G(J,U,P):=G(P)\cap G(J,U).$
\end{definition}

Next, we define another subgroup associated to a normal subgroup $N$ of ${\rm SL}_2(R):$

\begin{definition}
Let $R$ be a commutative ring with $1$ and let $N$ be a normal subgroup of ${\rm SL}_2(R).$ Then define $\rho(N)$ as the subgroup of $(R,+)$ generated by the set
$\{\rho(A)\mid A\in N\}$ and $U(N)$ as the subgroup of $(R/l(N))^*$ generated by all $u\in (R/l(N))^*$ such that (slightly abusing notation) there is an $A\in N$ with 
\begin{equation*}
A\equiv uI_2\text{ mod }l(N).
\end{equation*}
Then $G(N)$ is defined as $G(l(N),U(N),\rho(N)).$
\end{definition}

\begin{remark}
Implicit in this definition is the fact that $\rho(N)$ is a radix \cite[Theorem~2.2.2]{MR1114610}.
\end{remark}

Finally, we can state the following version of \cite[Theorem~3.1.3]{MR1114610}:

\begin{theorem}\label{sandwich_thm}
Let $R$ be a ring with many units and $N$ a normal subgroup of ${\rm SL}_2(R)$ with $l(N)\neq (0).$ Then the following chain of inclusions holds.
\begin{equation*}
[E(2,A),G(N)]\subset N\subset G(N).
\end{equation*}
\end{theorem}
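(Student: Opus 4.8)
The statement to prove is the double inclusion $[E(2,R),G(N)]\subset N\subset G(N)$ for a normal subgroup $N$ of ${\rm SL}_2(R)$ with $l(N)\neq(0)$, where $R$ is a ring with many units. Let me think about how I would approach this.

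The theorem is explicitly stated as "this version of \cite[Theorem~3.1.3]{MR1114610}", so the honest strategy is to reduce it to the Costa–Keller structure theorem rather than to reprove the entire normal-subgroup theory of ${\rm SL}_2$ from scratch. The plan is therefore twofold: first verify that the hypotheses needed by Costa–Keller are implied by "$R$ is a ring with many units", and second check that the objects $G(N)$, $\rho(N)$, $U(N)$, $l(N)$ as defined in this excerpt match the sandwiching groups used in their theorem.

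Let me sketch the inclusions. For the inner inclusion $N\subset G(N)$, I would argue directly from the definitions. Recall $G(N)=G(l(N),U(N),\rho(N))$. Take any $A\in N$. I need to show $A$ satisfies the two defining conditions of $G(P)$ with $P=\rho(N)$, together with the congruence condition of $G(l(N),U(N))$. The first is essentially by construction: $\rho(N)$ is defined as the subgroup of $(R,+)$ generated by $\{\rho(B)\mid B\in N\}$, so $\rho(A)\in\rho(N)$ is immediate, and the analogous facts for $\rho_T,\rho_{-1},\rho_{-T}$ follow once one knows $N$ is stable under transpose-inverse — here I would invoke that $N$ normal in ${\rm SL}_2(R)$ gives $A^{-1}\in N$ and that $A^T$ lies in a conjugate of $N$, using the Costa–Keller machinery (their Theorem~2.2.2, already cited in the excerpt as guaranteeing $\rho(N)$ is a radix) to control the $\rho_T,\rho_{-T}$ values and the ${\rm vn}_2(R)$-multiple condition $(a^2-1)\cdot{\rm vn}_2(R)+(d^2-1)\cdot{\rm vn}_2(R)\subset\rho(N)$. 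The congruence condition $A\equiv uI_2\bmod l(N)$ with $u\in U(N)$ holds because reducing $A$ modulo $l(N)$ makes the off-diagonal and the difference of diagonal entries vanish by the very definition of the level ideal $l(A)\subset l(N)$, forcing $A$ to be scalar mod $l(N)$, and $U(N)$ was defined to contain exactly such units. The subtle point is the radix conditions: one must verify that $\rho(N)$ genuinely satisfies both radix axioms, which is where I would lean on \cite[Theorem~2.2.2]{MR1114610} rather than recompute.

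For the outer inclusion $[E(2,R),G(N)]\subset N$, this is the deeper half and is precisely the content of the Costa–Keller theorem. The approach is to show that conjugating a generic element of $G(N)$ by an elementary matrix and taking the commutator produces an element already forced into $N$ by the level/radix data. Concretely, for $A\in G(N)$ and an elementary $E\in E(2,R)$, one computes $[E,A]=E^{-1}A^{-1}EA$ and shows its entries are expressible through the radix $\rho(N)=:P$ and the ideal $l(N)$ in such a way that the commutator lies in the normal subgroup generated by the $C(x)$ and elementary moves controlled by $P$; the identity $C(X)=[E(2,R),C(X)]$ from the self-reproducing remark is the engine here. The main obstacle I anticipate is exactly this commutator computation: one must match the entry-level algebra of $[E,A]$ against the two radix axioms $(a^3-a)x^2+a^2x\in P$ and $ax^3\in P$, which is the technical heart of \cite{MR1114610} and is where the "ring with many units" hypothesis (via stable range and the unit condition in Definition~\ref{many_units}) is consumed to guarantee the needed unimodular-row and unit manipulations go through. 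Since the excerpt frames this as a restatement, the cleanest writeup cites the inclusion $[E(2,R),G(N)]\subset N$ directly to \cite[Theorem~3.1.3]{MR1114610}, after checking that a ring with many units is in particular of stable range at most $3/2$ (Definition~\ref{many_units}(i) together with the remark), which is the ambient hypothesis under which Costa and Keller prove their sandwich result.
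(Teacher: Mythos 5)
There is a genuine gap, and it sits exactly where you chose to wave at the literature. You propose to obtain the hard inclusion by citing \cite[Theorem~3.1.3]{MR1114610} directly, ``after checking that a ring with many units is in particular of stable range at most $3/2$,'' on the claim that this is the ambient hypothesis of Costa--Keller's result. That claim cannot be right: $\mathbb{Z}$ has stable range at most $3/2$ in the sense of Definition~\ref{many_units}(i) (every proper quotient $\mathbb{Z}/c\mathbb{Z}$ is finite, hence semi-local, hence of stable range $1$), yet ${\rm SL}_2(\mathbb{Z})$ is virtually free and its normal subgroup lattice violates any sandwich of this kind. So the unit condition of Definition~\ref{many_units}(ii) is not a convenience to be ``consumed'' inside an unspecified commutator computation --- it is the load-bearing hypothesis, and Costa--Keller's Theorem~3.1.3 is proved for their own class of rings, which does not coincide with the paper's (deliberately first-order) class of rings with many units. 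That mismatch is precisely why the theorem cannot be quoted off the shelf and must be re-proved.

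What the paper actually does, and what your proposal omits entirely, is to verify the hypotheses of the \emph{technical} result \cite[Theorem~3.1.2]{MR1114610}, which holds for any commutative ring $R$ and normal subgroup $N$ provided there exists a proper ideal $J_0\unlhd R$ with $[E(2,R),{\rm SL}_2(R,J_0)]\subset C(J_0)\subset N$ and with $R/J_0$ of stable range $1$. Concretely: since $l(N)\neq(0)$, pick $A\in N$ with off-diagonal entry $b\neq 0$; the many-units hypothesis gives a unit $u$ with $u^4\neq 1$ and $u^2\equiv 1\bmod bR$, and then \cite[Lemma~6.5]{MR2357719} yields $E(2,R,(u^4-1)R)\subset N$, so one sets $J_0:=(u^4-1)R$. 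Stable range $1$ of $R/J_0$ is Definition~\ref{many_units}(i). The inclusion $[E(2,R),{\rm SL}_2(R,J_0)]\subset C(J_0)$ then requires a second application of the same mechanism to produce an ideal $J$ with $E(2,R,J)\subset C(J_0)$, the decomposition ${\rm SL}_2(R,J_0)=E(2,R,J_0)\cdot{\rm SL}_2(R,J)$ from \cite[p.240]{MR0249491}, the identity $[E(2,R),E(2,R,J_0)]=C(J_0)$ from \cite[Theorem~1.1.1]{MR1114610}, and $[E(2,R),{\rm SL}_2(R,J)]\subset E(2,R,J)$ from \cite[Corollary~5.23]{MR2357719}. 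None of these steps appear in your plan; instead you spend your effort on the inclusion $N\subset G(N)$, which in this approach is part of the \emph{conclusion} of Theorem~3.1.2 and needs no separate verification. Without the construction of $J_0$ and the verification of the two conditions above, the proposal does not prove the theorem.
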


\begin{proof}
We will only sketch the proof, because it is largely identical to the one of \cite[Theorem~3.1.3]{MR1114610} with the main difference being technical details. Costa and Keller prove their theorem by showing that $N$ and $R$ satisfy the assumptions of a different, technical theorem \cite[Theorem~3.1.2]{MR1114610} which states that a chain of inclusions as required exists for any commutative ring $R$ and any normal subgroup $N$ of ${\rm SL}_2(R)$ under the assumption that there is an ideal $J_0\unlhd R$ not equal to $R$ satisfying two properties:
\begin{enumerate}
\item{$[E(2,R),{\rm SL}_2(R,J_0)]\subset C(J_0)\subset N$ and}
\item{$R/J_0$ has stable range $1.$ 
}
\end{enumerate} 
However, note that as $l(N)\neq 0,$ there is an element 
\begin{equation*}
A=
\begin{pmatrix}
a & b\\
c & d
\end{pmatrix} 
\end{equation*}
of $N$ such that $b\neq 0.$ But $R$ has many units by assumption and hence there is a unit $u\in R$ such that $u^4\neq 1$ and $u^2\equiv 1\text{ mod }bR.$ But then 
\cite[Lemma~6.5]{MR2357719} implies that $E(2,R,(u^4-1)R)$ is a subgroup of $N.$ Setting $J_0:=(u^4-1)R,$ it is clear that $J_0$ is a non-zero ideal in $R$ and we may assume wlog that $J_0\neq R.$ However, the ring $R$ has many units and so the ring $R/J_0$ has stable range $1$.
This settles the second condition. Next, note that $C(J_0)\subset E(2,R,J_0)$ holds. But note that $C(J_0)$ itself also has the property that $l(C(J_0))\neq (0),$ because $C(u^4-1)$ is not a scalar element. Thus using the property of $R$ to have many units together with \cite[Lemma~6.5]{MR2357719} again, we can find another non-trivial ideal $J$ such that $E(2,R,J)\subset C(J_0).$ But then \cite[p.240]{MR0249491} immediately implies that 
\begin{equation*}
{\rm SL}_2(R,J_0)=E(2,R,J_0)\cdot{\rm SL}_2(R,J).  
\end{equation*}
But this then implies 
\begin{equation*}
[E(2,R),{\rm SL}_2(R,J_0)]=[E(2,R),E(2,R,J_0)\cdot{\rm SL}_2(R,J)]=[E(2,R),E(2,R,J_0)]\cdot[E(2,R),{\rm SL}_2(R,J)].
\end{equation*}
However, note that $[E(2,A),E(2,R,J_0)]=C(J_0)$ holds according to \cite[Theorem~1.1.1]{MR1114610}. Hence it suffices to show that $[E(2,R),{\rm SL}_2(R,J)]\subset C(J_0)$ holds to finish the proof of the first condition above and so the proof of the theorem. But already
\begin{equation*}
[E(2,R),{\rm SL}_2(R,J)]\subset E(2,R,J)
\end{equation*}
holds according to \cite[Corollary~5.23]{MR2357719} and this finishes the proof together with the aforementioned inclusion $E(2,R,J)\subset C(J_0).$
\end{proof}

This implies the following:

\begin{corollary}\label{sandwich_cor}
Let $R$ be a ring with many units and $N$ a normal subgroup of ${\rm SL}_2(R)$. Then for each $x\in l(N)$ and $b\in R$, the element  
$C(x^3b)$ is an element of $N.$
\end{corollary}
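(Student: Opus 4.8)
The plan is to read the statement off the lower inclusion $[E(2,R),G(N)]\subseteq N$ of the sandwich theorem rather than the upper one, exploiting that self-reproducing elements are built out of commutators via $C(X)=[E(2,R),C(X)]$.

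First I would clear the degenerate case: if $l(N)=(0)$ then every $A=\bigl(\begin{smallmatrix}a&b\\c&d\end{smallmatrix}\bigr)\in N$ has $a-d=b=c=0$, hence $A=\pm I_2$, so $x\in l(N)=(0)$ and $C(x^3b)=C(0)=I_2\in N$ trivially. Thus I may assume $l(N)\neq(0)$, which is exactly the hypothesis under which Theorem~\ref{sandwich_thm} supplies $[E(2,R),G(N)]\subseteq N\subseteq G(N)$.

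The first substantive step is that self-reproducing elements whose argument is a cube drawn from the radix are essentially free. Indeed, for $j\in\rho(N)$ and $r\in R$ the radix axiom (ii) gives $rj^3\in\rho(N)$, and then $E_{12}(rj^3)\in G(N)$: since $rj^3\in\rho(N)\subseteq l(N)$ the matrix is $\equiv I_2$ modulo $l(N)$ with $1\in U(N)$, its four invariants $\rho,\rho_T,\rho_{-1},\rho_{-T}$ equal $rj^3,0,-rj^3,0\in\rho(N)$, and the ${\rm vn}_2(R)$-condition is vacuous because both diagonal entries are $1$. As $G(N)$ is a normal subgroup of ${\rm SL}_2(R)$, it follows that the ideal $J:=(j^3\mid j\in\rho(N))$ satisfies $E(2,R,J)\subseteq G(N)$, whence by \cite[Theorem~1.1.1]{MR1114610}
\begin{equation*}
C(J)=[E(2,R),E(2,R,J)]\subseteq[E(2,R),G(N)]\subseteq N.
\end{equation*}
In particular $C(y)\in N$ for every $y\in J$, so the whole statement reduces to the single containment $x^3b\in J$.

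The remaining step, which I expect to be the real obstacle, is the level--radix comparison $l(N)^3\subseteq J$; granting it, $x^3b\in l(N)^3\subseteq J$ and we are done. To prove it I would write a generator of $l(N)^3$ as a product of three level generators and expand: the pure cubes lie in $J$ by definition, the ``triple'' cross terms $6\,r\,p_1p_2p_3$ lie in $\rho(N)$ upon applying axiom (ii) to $p_1+p_2+p_3\in\rho(N)$ and subtracting the pure cubes, and the ``square'' cross terms are reached through axiom (i) evaluated at ring arguments of the shape $a=a_ip_i$. The delicate point is that the mixed monomials carry \emph{independent} ring coefficients, so the abstract radix axioms alone do not separate them; here I would invoke the additional relations enjoyed by $\rho(N)$ as the radix of a normal subgroup, obtained by evaluating $\rho$ along the one-parameter families $E_{ij}(t)AE_{ij}(-t)\in N$, exactly as in the radix theory of Costa and Keller \cite[Theorem~2.2.2]{MR1114610}. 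This reduces $x^3b\in J$ to that established structure theory and completes the argument.
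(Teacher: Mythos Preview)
Your approach and the paper's are almost identical in spirit: both place an elementary matrix inside $G(N)$, then descend to $C(\cdot)$ via $C(X)=[E(2,R),C(X)]$ and the lower inclusion $[E(2,R),G(N)]\subseteq N$ of the sandwich theorem. The difference is that you route everything through the auxiliary ideal $J=(j^3\mid j\in\rho(N))$ and then need the containment $x^3b\in J$, whereas the paper works directly with the radix $\rho(N)$.

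That detour is unnecessary and is precisely where your sketch becomes incomplete. Your own verification that $E_{12}(rj^3)\in G(N)$ uses nothing about $rj^3$ beyond the two facts $rj^3\in\rho(N)$ and $\rho(N)\subseteq l(N)$. The identical check therefore applies verbatim to $E_{12}(x^3b)$ as soon as one knows $x^3b\in\rho(N)$, and that is exactly what \cite[Theorem~2.2.2]{MR1114610} records (beyond $\rho(N)$ being a radix). The paper simply cites this, obtains $E_{12}(x^3b)\in G(N)$, and concludes $C(x^3b)\in[E(2,R),\dl E_{12}(x^3b)\dr]\subseteq[E(2,R),G(N)]\subseteq N$ via \cite[Theorem~1.1.1]{MR1114610}.

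By contrast, your reduction requires the strictly stronger $x^3b\in J\subseteq\rho(N)$. Your attempt at $l(N)^3\subseteq J$ runs into exactly the obstacle you flag: the polarisation identity only shows mixed monomials land in $\rho(N)$, not in $J$, and the ``additional relations'' you invoke from Costa--Keller are, in the paper's usage, the statement $x^3b\in\rho(N)$ itself---which feeds the direct argument, not yours. (Your route would go through cleanly if one had $l(N)\subseteq\rho(N)$, since then $x\in\rho(N)$ gives $x^3b\in J$ at once; but you have not established this, and it is not needed.) So the proposal is essentially correct but circuitous: drop $J$, apply your $G(N)$-membership check directly to $E_{12}(x^3b)$, and cite \cite[Theorem~2.2.2]{MR1114610} for $x^3b\in\rho(N)$.
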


\begin{proof}
Let $x\in l(N)$ and $b\in R$ be given. Then we first show that $E_{12}(x^3b)$ is an element of $G(N).$ First, it is clear that $E_{12}(x^3b)$ is an element of 
$G(l(N),U(N)),$ because $x$ is an element of $l(N).$ Second, we observe that 
\begin{align*}
&\rho(E_{12}(x^3b))=1^2-1+1\cdot x^3b=x^3b,\rho_{-1}(E_{12}(x^3b))=-x^3b\\
&\rho_T(E_{12}(x^3b))=\rho(E_{21}(x^3b))=1^2-1+1\cdot 0=0\text{ and }\rho_{-T}(E_{12}(x^3b))=0.
\end{align*}
and that $(1^2-1)\cdot{\rm vn}_2(R)+(1^2-1)\cdot{\rm vn}_2(R)=(0).$ Thus $E_{12}(x^3b)$ being an element of $G(\rho(N))$ is equivalent to $x^3b$ being an element of 
$\rho(N).$ But this follows from \cite[Theorem~2.2.2]{MR1114610}. Furthermore, \cite[Theorem~1.1.1]{MR1114610} implies that $C(x^3b)$ is an element of $[E(2,R),\dl E_{12}(x^3b)\dr]$ and finishes the proof.
\end{proof}

Using Corollary~\ref{sandwich_cor}, we can finally prove Theorem~\ref{technical_thm1}:

\begin{proof}
Let a language $\C L$ with the relation symbols, constants and function symbols 
\begin{equation*}
(\C R,0,1,+,\times,(a_{i,j})_{1\leq i,j\leq 2},t,u,v,w,s) 
\end{equation*}
and a further function symbol $\cdot^{-1}:\C R^{2\times 2}\to\C R^{2\times 2}$ be given. Note that we use capital letters to denote matrices of variables (or constants) in the language in the following. For example the symbol $\C A$ denotes the $2\times 2$-matrix of constants $(a_{i,j})$ and $X$ commonly refers to matrices of $2\times 2$ variables in $\C L$. We also use the notation $X^{-1}:=^{-1}(X)$. However this is only a way to simplify notation, because first order sentences about matrices can always be reduced to first order sentences about their entries. Furthermore, we use the function symbol $P:\C R^{2\times 2}\to\C R$ defined in the language $\C L$ by
\begin{equation*}
P(X)=x_{11}\times x_{44}-x_{12}\times x_{21}-1.
\end{equation*}

Next, we define the following first order theory $\C T$:
\begin{enumerate}
\item{Sentences forcing the universe $R:=\C R^{\C M}$ of each model $\C M$ of $\C T_{kl}$ to be a commutative ring with respect to the functions 
$+^{\C M},\times^{\C M}$ and with $0^{\C M},1^{\C M}$ being $0$ and $1$.}
\item{The sentence $\forall a,b,c:(\exists x,y,z: a\times x+b\times y+c\times z=1)\rightarrow(\exists x,y,z:(a+b\times x)\times y+c\times z=1).$}
\item{The equation of constants $t=u\times(a_{11}-a_{44})+v\times a_{12}+w\times a_{21}.$}
\item{The sentence $\forall x\neq 0:\exists c\exists y\exists z:(c^4\neq 0)\wedge (c^2-1=y\times x)\wedge (c\times z=1).$}
\item{The sentence $P(\C A)=0$.}
\item{The sentence $\forall X:(P(X)=0)\rightarrow (XX^{-1}=I_2),$ where $I_2$ denotes the unit matrix in $\C R^{2\times 2}$ with entries the constant symbols $0,1$ as appropriate.}
\item{A family of sentences $(\theta_r)_{r\in\mathbb{N}}$ as follows: 
\begin{align*}
\theta_r:&\bigwedge_{1\leq k\leq r}\forall X_1,\dots,X_r,\forall e_1,\dots,e_r\in\{0,1,-1\}:\\
&((P(X_1)=\cdots=P(X_r)=0)\rightarrow 
(C(t^3\times s))\neq (\C A^{e_1})^{X_1}\cdots(\C A^{e_r})^{X_r})
\end{align*}
Here $\C A^{1}:=\C A,\C A^{-1}:=\C A^{-1}$ and $\C A^{0}:=I_2.$
}
\end{enumerate} 
Next, we will show that this theory $\C T$ is inconsistent. In order to see this let $\C M$ be a model of the sentences in (1) through (6). We denote the ring 
$\C R^{\C M}$ by $R$ and the element $\C A^{\C M}$ by $A.$ Note that the sentences (2) and (4) imply that $R$ is a ring with many units. Note that sentence (5) forces $A$ to be an element of ${\rm SL}_2(R)$ and that sentence (3) forces $t^{\C M}$ to be an element of $l(A).$ Abusing notation, we will denote $t^{\C M}$ by $t.$ Next, consider the normal subgroup $N$ of ${\rm SL}_2(R)$ normally generated by $A.$ But then Corollary~\ref{sandwich_cor} implies that $C(t^3\times s^{\C M})$ is an element of $N$ and hence there are $X_1,\dots,X_r\in{\rm SL}_2(R)$ and $e_1\dots,e_r\in\{0,1,-1\}$ such that 
\begin{equation*}
C(t^3\times s^{\C M})=(A^{e_1})^{X_1}\cdots(A^{e_r})^{X_r}. 
\end{equation*}
But this contradicts the statement $\theta_r^{\C M}.$ Hence the theory $\C T$ is inconsistent and thus according to G\"{o}del's Compactness Theorem \cite[Satz~3.2]{MR2596772} already a finite sub-theory $\C T_0$ is inconsistent. Next, let $L_1$ be the largest natural number $r\in\mathbb{N}$ such that $\theta_r\in\C T_0.$ But note that $\{(1)-(6),\theta_{r+1}\}\vdash\theta_r$ holds for all natural numbers $r\in\mathbb{N}$ and so already the theory $\C T_1$ consisting of all the sentences in (1) through (6) and the single sentence $\theta_{L_1}$ is inconsistent.

However, if $R$ is any ring with many units, $A$ an element of ${\rm SL}_2(R)$, $t$ an element of $l(A)$ and $b$ an arbitrary element of $R$, then this setup gives a model $\C M$ for all the sentences in (1) through (6) and hence this setup must violate the statement $\theta_{L_1}^{\C M}.$ Thus we obtain $C(x^3b)$ is an element of $B_A(L_1).$ This finishes the proof.
\end{proof}

We finish this section with the proof of Theorem~\ref{technical_thm2}. Again, this proof is done with a model-theoretic compactness argument similar to the proof of Theorem~\ref{technical_thm1}. For this reason, we will only give a brief sketch. 

\begin{proof}
Again, one defines a language $\C L$ with the relation symbols, constants and function symbols $(\C R,0,1,+,\times,t,u)$ and then defines a theory $\C T$, which enforces that for each model $\C M,$ the universe $\C R^{\C M}$ is a commutative ring. Additionally the theory $\C T$ should contain sentences that ensure that for each $k\in\mathbb{N},$ the element $E_{12}(t(u^4-1))$ can not be written as a product of $k$ conjugates of $C(1)$ or $C(1)^{-1}$ in ${\rm SL}_2(R).$ However, one then easily obtains that the theory $\C T$ is inconsistent, because $E(R,2,(u^4-1)R)$ is contained in the subgroup of ${\rm SL}_2(R)$ normally generated by $C(1)$ according to \cite[Lemma~6.5]{MR2357719}. Then using compactness this implies in particular that there is an $L_2\in\mathbb{N}$ such that $E_{12}(t(u^4-1))$ can be written as a product of at most $L_2$ conjugates of $C(1)$ or $C(1)^{-1}$ for any $t\in R.$ 
\end{proof}

\section{Normally generating subsets of ${\rm SL}_2(R)$ and the proof of Theorem~\ref{lower_bounds_sl2}}

To prove Theorem~\ref{lower_bounds_sl2}, we will use Corollary~\ref{normal_gen_criterion}. Note that it requires the study of the abelianization of ${\rm SL}_2(R).$ Hence in the first subsection, we will determine the abelianization of ${\rm SL}_2(R)$ and use it to prove the second part of Theorem\ref{lower_bounds_sl2}. Then in the second subsection, we will use the description of the abelianization to prove the first part of Theorem~\ref{lower_bounds_sl2}.

\subsection{The abelianization of ${\rm SL}_2(R)$}

Note that the commutator subgroup of ${\rm SL}_2(R)$ is a non-central subgroup of ${\rm SL}_2(R).$ Assuming wlog. that ${\rm SL}_2(R)$ is not perfect, Serre's solution of the congruence subgroup problem for ${\rm SL}_2(R)$ \cite{MR272790}, implies the existence of a maximal, non-trivial finite index ideal $I$ of $R$ such that the abelianization map of ${\rm SL}_2(R)$ factors through ${\rm SL}_2(R/I).$ Let $I$ factor into prime ideals as $I=\prod_{j=1}^n\C X_j^{y_j}$ for prime ideals $\C X_j$ and positive integers $y_j.$ Then using the Chinese Remainder Theorem ${\rm SL}_2(R/I)$ factors as $\prod_{j=1}^n{\rm SL}_2(R/\C X_j^{y_j}).$ Hence to determine the abelianization of ${\rm SL}_2(R)$ it suffices to determine the abelianization of the various ${\rm SL}_2(R/\C X_j^{y_j}).$ Most of the prime ideals however give rise to perfect ${\rm SL}_2$ according to the following lemma: 

\begin{lemma}
Let $R$ be a ring of 
 integers and $\C X$ a non-zero prime ideal of $R$ with $|R/\C X|\geq 4$ and $L$ a natural number. Than ${\rm SL}_2(R/\C P^L)$ is a perfect group.
\end{lemma}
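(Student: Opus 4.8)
The plan is to show that the finite local ring $S:=R/\C X^{L}$ has the property that every elementary matrix already lies in the commutator subgroup of ${\rm SL}_2(S)$; since ${\rm SL}_2(S)$ is generated by elementary matrices, this immediately yields perfectness.

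First I would record that $S$ is a commutative local ring. As $R$ is a Dedekind domain the non-zero prime $\C X$ is maximal, so the only prime of $S$ is the image $\F m$ of $\C X$, and the residue field is $S/\F m\cong R/\C X=:k$ with $|k|=q\geq 4$. Since $S$ is local it has stable range $1$, and in fact a short Gaussian-elimination argument shows directly that ${\rm SL}_2(S)=E(2,S)$: given $A=\begin{pmatrix} a & b\\ c & d\end{pmatrix}\in{\rm SL}_2(S)$, at least one of $a,c$ is a unit (otherwise $\det A\in\F m$, contradicting $\det A=1$), so after a single left multiplication by an elementary matrix one may assume $a\in S^{*}$, then clear $c$ by $E_{21}(-ca^{-1})$, and finish with the standard identities $h(a)=w(a)w(1)^{-1}$ and $\begin{pmatrix} a & b\\ 0 & a^{-1}\end{pmatrix}=h(a)E_{12}(a^{-1}b)$, where $w(u)=E_{12}(u)E_{21}(-u^{-1})E_{12}(u)$. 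Hence it suffices to place all $E_{12}(y)$ and $E_{21}(y)$ in $[{\rm SL}_2(S),{\rm SL}_2(S)]$.

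The key computation is the commutator identity
\begin{equation*}
h(u)E_{12}(x)h(u)^{-1}E_{12}(x)^{-1}=E_{12}((u^{2}-1)x),
\end{equation*}
valid for every unit $u\in S^{*}$ and every $x\in S$, together with its transpose giving $E_{21}((u^{-2}-1)x)$. Thus the whole argument reduces to producing a single unit $u\in S^{*}$ for which $u^{2}-1$ is again a unit: then $x\mapsto(u^{2}-1)x$ is a bijection of $S$, so every $E_{12}(y)$ and (since $u^{-2}-1=-u^{-2}(u^{2}-1)$ is simultaneously a unit) every $E_{21}(y)$ is realized as a commutator.

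The existence of such a $u$ is exactly where the hypothesis $q\geq 4$ enters, and I expect this to be the only genuinely delicate point. Because $S$ is local, an element is a unit precisely when its residue in $k$ is non-zero, so it suffices to find $\bar u\in k^{*}$ with $\bar u^{2}\neq 1$ and lift it. The equation $\bar u^{2}=1$ has at most the two solutions $\pm 1$ in the field $k$, while $|k^{*}|=q-1\geq 3$, so such a $\bar u$ exists; this fails precisely for $q\in\{2,3\}$, matching the necessity of the bound $|R/\C X|\geq 4$ (indeed ${\rm SL}_2(\mathbb{F}_2)$ and ${\rm SL}_2(\mathbb{F}_3)$ are not perfect). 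Lifting $\bar u$ to $u\in S^{*}$, the element $u^{2}-1$ reduces to $\bar u^{2}-1\neq 0$ and is therefore a unit, completing the inclusion $E(2,S)\subset[{\rm SL}_2(S),{\rm SL}_2(S)]$ and hence the perfectness of ${\rm SL}_2(R/\C X^{L})$.
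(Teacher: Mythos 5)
Your proof is correct and takes essentially the same approach as the paper's: both arguments hinge on producing a unit $u$ in $R/\C X^L$ such that $u^2-1$ is again a unit (possible exactly because the residue field has at least $4$ elements), then use the commutator identity $[h(u),E_{12}(x)]=E_{12}((u^2-1)x)$ to put all elementary matrices in the commutator subgroup, and conclude from the fact that elementary matrices generate ${\rm SL}_2(R/\C X^L)$. The only differences are cosmetic: you prove the elementary generation directly by Gaussian elimination over the local ring where the paper cites a reference, and you locate the unit via the residue field rather than the paper's equivalent observation that otherwise $y(y-1)(y+1)\in\C X$ for all $y\in R$.
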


\begin{proof}
First, note that there has to be a unit $\bar{u}\in R/\C X^L=:\bar{R}$ such that $\bar{u}^2-1$ is also a unit. This is the case, because an element of $R/\C X^L$, which is not a unit must be the image of an element of $\C X\subset R.$ Hence if there is no unit $\bar{u}\in\bar{R}$ as required, then $y(y^2-1)=y(y-1)(y+1)$ would have to be an element of $\C X$ for each element $y$ of $R.$ Phrased differently, the field $R/\C X$ would have at most three elements, which contradicts $|R/\C X|\geq 4.$ Then for any $\bar{x}\in\bar{R},$ consider the element 
$(h(\bar{u}),E_{12}(\bar{x}\cdot(\bar{u}^2-1)^{-1}))$ of the commutator subgroup of ${\rm SL}_2(\bar{R}).$ We obtain 
\begin{align*}
(h(\bar{u}),E_{12}(\bar{x}\cdot(\bar{u}^2-1)^{-1})))&=h(\bar{u})\cdot E_{12}(\bar{x}\cdot(\bar{u}^2-1)^{-1}))\cdot h(\bar{u})^{-1}\cdot E_{12}(-\bar{x}\cdot(\bar{u}^2-1)^{-1})\\
&=E_{12}(\bar{u}^2\bar{x}\cdot(\bar{u}^2-1)^{-1})\cdot E_{12}(-\bar{x}\cdot(\bar{u}^2-1)^{-1}))=E_{12}(\bar{x}).
\end{align*}
So each elementary matrix of ${\rm SL}_2(\bar{R})$ is contained in the commutator subgroup of ${\rm SL}_2(\bar{R}).$ But by \cite[p.240]{MR0249491}, the set of elementary matrices generates ${\rm SL}_2(\bar{R})$ and we are done.
\end{proof}

So we can assume that the aforementioned ideal $I$ only has prime divisors $\C X$ such that $R/\C X$ has at most three elements. Next, set $r_1:=r_1(R),r_2:=r_2(R)$ and $q:=q(R)$ and 
\begin{enumerate}
\item{let $\C P_1,\dots,\C P_{r_2}$ be the ramified prime divisors of $2R$ with $R/\C P_j=\mathbb{F}_2$,}
\item{let $\C Q_1,\dots,\C Q_{r_1}$ be the unramified prime divisors of $2R$ with $R/\C Q_i=\mathbb{F}_2$ and} 
\item{let $\C K_1,\dots,\C K_q$ be the prime divisors of $3R$ with $R/\C K_l=\mathbb{F}_3$}.
\end{enumerate}

By the previous discussion, there are natural numbers $p_1,\dots,p_{r_2},q_1,\dots,q_{r_2}$ and $k_1,\dots,k_q$ such that $I=(\prod_{j=1}^{r_2}\C P_j^{p_j})\cdot(\prod_{i=1}^{r_1}\C Q_i^{q_i})\cdot(\prod_{l=1}^q\C K_l^{k_l}).$ First, we are going to determine the abelianization of ${\rm SL}_2(R/\C Q_i^{q_i}).$ To find this abelianization, we note:

\begin{lemma}\label{lemma_iso_unramified}
Let $R=\C O_S$ be the ring of S-algebraic integers of a number field $K$, $\C Q$ be an unramified prime divisor of $2R$ with $R/\C Q=\mathbb{F}_2$ and $L\in\mathbb{N}.$ Then $\mathbb{Z}_{2^L}\to R/\C Q^L,n+2^L\mathbb{Z}\mapsto n+\C Q^L$ defines an isomorphism.
\end{lemma}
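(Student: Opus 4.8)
The plan is to recognize the stated map as the reduction of the unique ring homomorphism $\mathbb{Z}\to R$, to compute its kernel as $2^L\mathbb{Z}$, and then to conclude via a cardinality count. The natural map $\mathbb{Z}\to R$ composed with the quotient $R\to R/\C Q^L$ is a ring homomorphism sending $n\mapsto n+\C Q^L$, so it automatically factors through $\mathbb{Z}/2^L\mathbb{Z}$ once its kernel is shown to contain $2^L\mathbb{Z}$; the content is that the kernel is exactly $2^L\mathbb{Z}$ and that both rings have the same cardinality.

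First I would record two standard facts about the Dedekind domain $R=\C O_S$ and its nonzero prime $\C Q$. Since $R/\C Q=\mathbb{F}_2$ has characteristic $2$, the prime $\C Q$ contracts to $\C Q\cap\mathbb{Z}=2\mathbb{Z}$, i.e. $\C Q$ lies over the rational prime $2$. Since $\C Q$ is an \emph{unramified} divisor of $2R$, the corresponding ramification index is $1$, which in terms of the $\C Q$-adic valuation $v_{\C Q}$ on $R$ means $v_{\C Q}(2)=1$.

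Next I would compute the kernel $\mathbb{Z}\cap\C Q^L$. For an integer $n=2^k m$ with $m$ odd, the element $m$ is a unit at $\C Q$ (as $m\notin 2\mathbb{Z}=\C Q\cap\mathbb{Z}$), so $v_{\C Q}(n)=k\,v_{\C Q}(2)=k$ by the previous step. Hence $n\in\C Q^L$ if and only if $v_{\C Q}(n)\geq L$ if and only if $2^L\mid n$, giving $\mathbb{Z}\cap\C Q^L=2^L\mathbb{Z}$. Thus the induced map $\mathbb{Z}/2^L\mathbb{Z}\to R/\C Q^L$ is an injective ring homomorphism.

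Finally I would finish by comparing cardinalities: the standard filtration $R/\C Q^L\supset\C Q/\C Q^L\supset\cdots\supset\C Q^{L-1}/\C Q^L\supset 0$ has successive quotients isomorphic to the one-dimensional $R/\C Q$-vector space $R/\C Q$, so $|R/\C Q^L|=|R/\C Q|^L=2^L=|\mathbb{Z}/2^L\mathbb{Z}|$. An injective map between finite sets of equal cardinality is a bijection, so the ring homomorphism above is an isomorphism. The only point requiring care is that both hypotheses on $\C Q$ must be used: unramifiedness guarantees $v_{\C Q}(2)=1$ for the kernel computation, while $R/\C Q=\mathbb{F}_2$ (residue degree $1$) guarantees $|R/\C Q^L|=2^L$ for the cardinality count. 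Neither is deep, so I expect no substantial obstacle.
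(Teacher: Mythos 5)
Your proof is correct and follows essentially the same route as the paper: both arguments establish well-definedness from $2^L\mathbb{Z}\subset\C Q^L$, count $|R/\C Q^L|=|R/\C Q|^L=2^L$, and then use unramifiedness to get injectivity. The only cosmetic difference is that the paper phrases the injectivity step as ``the additive order of $1+\C Q^L$ is $2^L$'' (arguing that $\C Q$ divides $2^iR$ at most $i$ times), which is exactly your valuation-theoretic kernel computation $v_{\C Q}(2^k m)=k$ in different words.
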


\begin{proof}
First, note that $2\in\C Q$ and hence $2^L\mathbb{Z}$ is a subset of $\C Q^L.$ This implies that the inclusion $\mathbb{Z}\to R$ induces the ring homomorphism 
$q:\mathbb{Z}_{2^L}\to R/\C Q^L,n+2^L\mathbb{Z}\mapsto n+\C Q^L.$ However, note that both $\mathbb{Z}_{2^L}$ and $R/\C Q^L$ have $2^L=|R/\C Q|^L$ many elements. Thus to prove that $q$ is an isomorphism it suffices to prove that the order of $1=1+\C Q^L$ in the group $(R/\C Q^L,+)$ is $2^L$. To this end, let $j$ be the order of $1$ as a group element in $(R/\C Q^L,+).$ Note that $j$ must be a power of $2,$ say $j=2^i.$ This implies $2^i+\C Q^L=0$ and so $2^i$ is an element of $\C Q^L.$ However, $\C Q$ is an unramified prime divisor of $2R$ and hence $\C Q$ can divide $2^i$ at most $i$ times. Thus $j=2^L$ must hold and this finishes the proof.    
\end{proof}

But using this lemma one can identify the abelianization of ${\rm SL}_2(R/\C Q_i^{q_i}):$

\begin{proposition}\label{abelianization_umramified}
Let $R=\C O_S$ be the ring of S-algebraic integers of a number field $K$, $\C Q$ be an unramified prime divisor of $2R$ with $R/\C Q=\mathbb{F}_2$ and $L\geq 2.$ Then the abelianization map of ${\rm SL}_2(R/\C Q^L)$ is the unique epimorphism ${\rm SL}_2(R/\C Q^L)\to R/\C Q^2$ mapping $E_{12}(x+\C Q^L)$ to $x+\C Q^2$ for all $x\in R.$
\end{proposition}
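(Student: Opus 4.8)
The plan is to reduce everything to ${\rm SL}_2(\mathbb{Z}/2^L\mathbb{Z})$ and compute its abelianization. By Lemma~\ref{lemma_iso_unramified} the inclusion $\mathbb{Z}\hookrightarrow R$ induces ring isomorphisms $\mathbb{Z}/2^L\mathbb{Z}\cong R/\C Q^L$ and (taking $L=2$) $\mathbb{Z}/4\mathbb{Z}\cong R/\C Q^2$, compatible with the reduction maps $R/\C Q^L\to R/\C Q^2$ and $\mathbb{Z}/2^L\mathbb{Z}\to\mathbb{Z}/4\mathbb{Z}$. Hence it suffices to prove that the abelianization $A:={\rm SL}_2(\mathbb{Z}/2^L\mathbb{Z})^{\mathrm{ab}}$ is cyclic of order $4$ with $E_{12}(x)$ mapping to $x+4\mathbb{Z}$, and then transport the statement back along these isomorphisms. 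Writing $A$ additively and letting $\bar e,\bar f$ denote the images of $E_{12}(1),E_{21}(1)$, I would first record that $A$ is cyclic generated by $\bar e$: the group ${\rm SL}_2(\mathbb{Z}/2^L\mathbb{Z})$ is generated by the elementary matrices, the relations $E_{12}(x)=E_{12}(1)^x$ and $E_{21}(x)=E_{21}(1)^x$ send the classes of $E_{12}(x),E_{21}(x)$ to $x\bar e,x\bar f$, and the Weyl element $w:=E_{21}(1)E_{12}(-1)E_{21}(1)=\left(\begin{smallmatrix}0&-1\\1&0\end{smallmatrix}\right)$ satisfies $wE_{12}(x)w^{-1}=E_{21}(-x)$, which in $A$ forces $\bar f=-\bar e$. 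Since $E_{12}(2^L)=I_2$ we also get $2^L\bar e=0$.

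The upper bound $4\bar e=0$ is the quick part and is internal to ${\rm SL}_2(\mathbb{Z}/2^L\mathbb{Z})$. The same Weyl element satisfies $w^2=-I_2$, hence $w^4=I_2$, so $4[w]=0$ in $A$; as $[w]=2\bar f-\bar e=-3\bar e$, this gives $12\bar e=0$. Combined with $2^L\bar e=0$ and $\gcd(12,2^L)=4$ (using $L\geq 2$), we obtain $4\bar e=0$, so that $A$ is a quotient of $\mathbb{Z}/4\mathbb{Z}$.

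The heart of the proof, and the main obstacle, is the matching lower bound: that $\bar e$ has order exactly $4$, equivalently that $E_{12}(2)$ is not a product of commutators in ${\rm SL}_2(\mathbb{Z}/2^L\mathbb{Z})$. I would obtain this by lifting to ${\rm SL}_2(\mathbb{Z})$. The reduction $\pi\colon{\rm SL}_2(\mathbb{Z})\twoheadrightarrow{\rm SL}_2(\mathbb{Z}/2^L\mathbb{Z})$ is surjective with kernel the principal congruence subgroup $\Gamma(2^L)$, so $A\cong{\rm SL}_2(\mathbb{Z})^{\mathrm{ab}}/\overline{\Gamma(2^L)}$, where $\overline{\Gamma(2^L)}$ denotes the image of $\Gamma(2^L)$ in ${\rm SL}_2(\mathbb{Z})^{\mathrm{ab}}\cong\mathbb{Z}/12\mathbb{Z}$, the latter generated by the class of $E_{12}(1)$. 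Because $L\geq 2$ we have $\Gamma(2^L)\subseteq\Gamma(4)$, and the key point is that the image of $\Gamma(4)$ in $\mathbb{Z}/12\mathbb{Z}$ is contained in the order-$3$ subgroup $\{0,4,8\}$: the group $\Gamma(4)$ is torsion-free, and the associated modular curve has genus $0$ with six cusps, so $\Gamma(4)$ is free on parabolic (cuspidal) generators, each of which is ${\rm SL}_2(\mathbb{Z})$-conjugate to a power of $E_{12}(4)$ and therefore has abelianization class in $4\mathbb{Z}/12\mathbb{Z}$. Conversely $E_{12}(2^L)\in\Gamma(2^L)$ has class $2^L\bmod 12\in\{4,8\}$, so in fact $\overline{\Gamma(2^L)}=\{0,4,8\}$ exactly, whence $A\cong(\mathbb{Z}/12\mathbb{Z})/\{0,4,8\}\cong\mathbb{Z}/4\mathbb{Z}$ with $\bar e\mapsto 1$. (Alternatively, one may simply invoke the classical identity ${\rm SL}_2(\mathbb{Z}/n\mathbb{Z})^{\mathrm{ab}}\cong\mathbb{Z}/\gcd(n,12)\mathbb{Z}$, whose value at $n=2^L$ is $\mathbb{Z}/4\mathbb{Z}$.)

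Finally, transporting this back through the isomorphisms of Lemma~\ref{lemma_iso_unramified} identifies the abelianization map of ${\rm SL}_2(R/\C Q^L)$ with an epimorphism onto $R/\C Q^2$ sending $E_{12}(x+\C Q^L)$ to $x+\C Q^2$; uniqueness is automatic since $A$ is generated by the single class $\bar e=[E_{12}(1)]$, so any epimorphism with the prescribed effect on $E_{12}$ is determined. I expect the only genuinely delicate input to be the lower bound of the third paragraph, i.e. producing a homomorphism onto $\mathbb{Z}/4\mathbb{Z}$ that detects $E_{12}(2)$; the cyclicity and the bound $4\bar e=0$ are formal.
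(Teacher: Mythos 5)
Your proof is correct, but the decisive step runs along a genuinely different route from the paper's. The shared part: both arguments use Lemma~\ref{lemma_iso_unramified} to identify $R/\C Q^L$ with $\mathbb{Z}/2^L\mathbb{Z}$, and both obtain cyclicity of the abelianization, generated by $\bar e=[E_{12}(1)]$, from conjugation by a Weyl element swapping $E_{12}$ and $E_{21}$. For the upper bound $4\bar e=0$, the paper cites Morita's fact that ${\rm SL}_2(S)^{\rm ab}$ is a quotient of $\mathbb{Z}_{12}\oplus\mathbb{Z}_6$ whenever the additive group of $S$ is generated by two units, and intersects this with cyclicity; your derivation of $12\bar e=0$ from $w^4=I_2$ and $[w]=-3\bar e$ proves by hand exactly the case of Morita's fact that is needed, which is a clean, self-contained replacement. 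The real divergence is the lower bound. The paper composes the reduction ${\rm SL}_2(R/\C Q^L)\to{\rm SL}_2(R/\C Q^2)\cong{\rm SL}_2(\mathbb{Z}_4)$ with an explicit epimorphism ${\rm SL}_2(\mathbb{Z}_4)\to\mathbb{Z}_4$, $E_{12}(n)\mapsto n$, whose existence is left as a finite check in a group of order $48$. You instead lift to ${\rm SL}_2(\mathbb{Z})$, write the abelianization as $(\mathbb{Z}/12\mathbb{Z})/\overline{\Gamma(2^L)}$, and compute $\overline{\Gamma(2^L)}=\{0,4,8\}$ from the Fuchsian structure of $\Gamma(4)$: torsion-free, genus $0$, six cusps, hence generated by parabolic elements, each ${\rm SL}_2(\mathbb{Z})$-conjugate to a power of $E_{12}(4)$. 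This is correct, with one point you gloss over that does hold but deserves a sentence: the lift to $\Gamma(4)$ of a parabolic generator of a cusp stabilizer really is $\sigma E_{12}(4k)\sigma^{-1}$ and not $-\sigma E_{12}(4k)\sigma^{-1}$, because the latter is congruent to $-I_2$ rather than $I_2$ modulo $4$; torsion-freeness alone does not settle this sign. Your route imports nontrivial modular-curve background (genus and cusp count of $X(4)$, parabolic generation) where the paper needs only a finite computation; in exchange it explains the answer conceptually as $\gcd(2^L,12)$, treats all $L$ at once without reducing to $L=2$, and — via your parenthetical appeal to ${\rm SL}_2(\mathbb{Z}/n\mathbb{Z})^{\rm ab}\cong\mathbb{Z}/\gcd(n,12)\mathbb{Z}$ — can even be collapsed to a one-line citation.
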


\begin{proof}
According to Lemma~\ref{lemma_iso_unramified}, the additive group of the ring $R/\C Q^L$ is generated by the unit $1+\C Q^L.$ However, an observation by Jun Morita \cite[Fact~2]{MR1335315} states that the abelianization of ${\rm SL}_2(S)$ is a quotient of $\mathbb{Z}_{12}\oplus\mathbb{Z}_6$ for any ring $S$ whose additive group is generated by two units of $S.$ Thus the abelianization of ${\rm SL}_2(R/\C Q^L)$ is a quotient of $\mathbb{Z}_{12}\oplus\mathbb{Z}_6.$ 
But on the other hand, the group ${\rm SL}_2(R/\C Q^L)$ is generated by the set $\{E_{12}(\bar{x}),E_{21}(\bar{x})\mid\bar{x}\in R/\C Q^L\}$ according to \cite[p.240]{MR0249491} and
\begin{equation*}
\begin{pmatrix}
0 & 1\\
-1 & 0
\end{pmatrix}
\cdot
E_{12}(\bar{x})
\cdot
\begin{pmatrix}
0 & -1\\
1 & 0
\end{pmatrix}
=E_{21}(\bar{x})
\end{equation*}
holds for all $\bar{x}\in R/\C Q^L.$ Hence the abelianization of ${\rm SL}_2(R/\C Q^L)$ is a quotient of $(R/\C Q^L,+)$ and so is in particular a cyclic group with at most four elements according to Lemma~\ref{lemma_iso_unramified}. But together with the abelianization being a quotient of $\mathbb{Z}_{12}\oplus\mathbb{Z}_6$, this implies that the abelianization is the cyclic group with either $1,2$ or $4$ elements. But consider the reduction homomorphism ${\rm SL}_2(R/\C Q^L)\to{\rm SL}_2(R/\C Q^2)={\rm SL}_2(\mathbb{Z}_4)$. We leave it as an exercise to the reader to check that there is an epimorphism ${\rm SL}_2(\mathbb{Z}_4)\to\mathbb{Z}_4$ mapping $E_{12}(n+4\mathbb{Z})$ to $n+4\mathbb{Z}$. Using the isomorphism $\mathbb{Z}_4\to R/\C Q^2,n+4\mathbb{Z}\mapsto n+\C Q^2$ from Lemma~\ref{lemma_iso_unramified}, this finishes the proof.
\end{proof}

Next, we are going to determine the abelianization of ${\rm SL}_2(R/\C P_j^{p_j}).$ We note: 

\begin{lemma}\label{factoring_ramified}
Let $R=\C O_S$ be the ring of S-algebraic integers of a number field $K$, let $\C P$ be a ramified prime divisor of $2R$ with $R/\C P=\mathbb{F}_2$ and let $L\geq 2$ be a natural number. Then the abelianization homomorphism of ${\rm SL}_2(R/\C P^L)$ factors through ${\rm SL}_2(R/\C P^2)$. In particular, the abelianization of ${\rm SL}_2(R/\C P^L)$ is a quotient of $R/\C P^2$ and so has at most $|R/\C P^2|=4$ elements.
\end{lemma}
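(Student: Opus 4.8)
The plan is to show that the kernel $N$ of the reduction homomorphism $\pi\colon{\rm SL}_2(R/\C P^L)\to{\rm SL}_2(R/\C P^2)$ lies inside the commutator subgroup of ${\rm SL}_2(R/\C P^L)$; since $\pi$ is surjective (elementary matrices generate by \cite[p.240]{MR0249491}), this is exactly the assertion that the abelianization map factors through ${\rm SL}_2(R/\C P^2)$. First I would record the structural features of $\bar R:=R/\C P^L$ that drive the argument: $\bar R$ is local with maximal ideal $\bar{\C P}=\C P/\C P^L$ and residue field $\mathbb{F}_2$, its ideals form the chain $\C P^i/\C P^L$, and — this is where ramification enters — since $\C P$ is a ramified divisor of $2R$ we have $2\in\C P^2$, so $2$ already vanishes in $\bar R/\bar{\C P}^2=R/\C P^2$.

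Next I would describe $N={\rm SL}_2(\bar R,\C P^2/\C P^L)$ by generators. Writing a matrix of $N$ as $\begin{pmatrix}1+a&b\\c&1+d\end{pmatrix}$ with $a,b,c,d\in\C P^2/\C P^L$, the entry $1+a$ is a unit because $\bar R$ is local; clearing the off-diagonal entries (left multiply by $E_{21}(-c(1+a)^{-1})$, then right multiply by $E_{12}(-(1+a)^{-1}b)$) reduces the matrix to $h(1+a)$. Hence $N$ is generated by the elementary matrices $E_{12}(x),E_{21}(x)$ with $x\in\C P^2/\C P^L$ together with the diagonal matrices $h(v)$ with $v\equiv 1\bmod\C P^2$, and it suffices to verify each generator is a product of commutators.

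For the elementary generators I would use $[h(u),E_{12}(z)]=E_{12}((u^2-1)z)$. Choosing a uniformizer $p\in\C P\setminus\C P^2$ and setting $u:=1+p$ (a unit of $\bar R$), the key computation is $u^2-1=p^2+2p$: since $2\in\C P^2$ forces $2p\in\C P^3$ while $p^2\in\C P^2\setminus\C P^3$, the element $u^2-1$ has valuation exactly $2$ and so generates the ideal $\C P^2/\C P^L$. Thus every $x\in\C P^2/\C P^L$ has the form $(u^2-1)z$, making $E_{12}(x)=[h(u),E_{12}(z)]$ a commutator, and conjugating by the Weyl element handles $E_{21}(x)$. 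This is the step that genuinely uses ramification: unramified, $2p$ and $p^2$ have equal valuation and $u^2-1$ can lie deeper in the chain, which is precisely the phenomenon distinguishing Proposition~\ref{abelianization_umramified}.

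It remains to treat the diagonal generators $h(v)$ with $v\equiv 1\bmod\C P^2$, which I regard as the main obstacle since they are not of elementary type. Here I would compute the image of $h(v)$ in the abelianization $A$ directly from the decomposition $h(v)=\big(E_{12}(v)E_{21}(-v^{-1})E_{12}(v)\big)\big(E_{12}(1)E_{21}(-1)E_{12}(1)\big)^{-1}$. Writing $\epsilon(x)$ for the class of $E_{12}(x)$ in $A$, the map $\epsilon$ is additive and the Weyl conjugation relates the classes of $E_{21}$ to those of $E_{12}$ up to sign; moreover the previous step shows $\epsilon$ annihilates $\C P^2/\C P^L$, so $\epsilon(v)=\epsilon(v^{-1})=\epsilon(1)$. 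Substituting these values makes all the contributions cancel, so $h(v)$ lies in the commutator subgroup. Together with the previous paragraph this yields $N\subset[{\rm SL}_2(\bar R),{\rm SL}_2(\bar R)]$, hence the abelianization factors through ${\rm SL}_2(R/\C P^2)$. Finally the ``in particular'' follows as in Proposition~\ref{abelianization_umramified}: elementary matrices generate and $E_{12}(\bar x)$ is conjugate to $E_{21}(\pm\bar x)$ via the Weyl element, so the abelianization of ${\rm SL}_2(R/\C P^2)$ is a quotient of $(R/\C P^2,+)$, a group with $|R/\C P^2|=4$ elements.
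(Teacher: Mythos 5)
Your proof is correct, and its engine is exactly the paper's: the commutator identity $[h(1+\bar p),E_{12}(\bar z)]=E_{12}((2\bar p+\bar p^2)\bar z)$ together with the valuation argument that ramification forces $2p\in\C P^3$ while $p^2\in\C P^2\setminus\C P^3$, so that $u^2-1$ generates $\bar{\C P}^2$ and every level-$\bar{\C P}^2$ elementary matrix is a commutator. Where you genuinely diverge is in the treatment of the congruence kernel. The paper, having placed the normal closure $\dl E_{12}(\bar a)\mid \bar a\in\bar{\C P}^2\dr$ inside the commutator subgroup, simply cites \cite[p.240]{MR0249491} for the fact that this normal closure \emph{equals} the kernel of ${\rm SL}_2(R/\C P^L)\to{\rm SL}_2(R/\C P^2)$, and is done. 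You instead prove the needed containment (kernel $\subset$ commutator subgroup) by hand: using that $R/\C P^L$ is local, you decompose any kernel element as a product of elementaries with entries in $\C P^2/\C P^L$ and a diagonal matrix $h(v)$ with $v\equiv 1\bmod \C P^2$, and you kill $h(v)$ in the abelianization via the Whitehead-type identity $h(v)=\bigl(E_{12}(v)E_{21}(-v^{-1})E_{12}(v)\bigr)\bigl(E_{12}(1)E_{21}(-1)E_{12}(1)\bigr)^{-1}$, additivity of $x\mapsto [E_{12}(x)]$, and the already-established vanishing on $\C P^2/\C P^L$ (which gives $\epsilon(v)=\epsilon(v^{-1})=\epsilon(1)$, so the contributions cancel regardless of the sign convention in the Weyl conjugation). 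Your route is longer but self-contained: it replaces the appeal to Bass's structure theory of ${\rm SL}_2$ over semilocal rings by an explicit argument, and in doing so it makes visible exactly why the diagonal part of the congruence kernel dies in the abelianization — note, though, that you prove slightly less than the citation (containment in the commutator subgroup rather than normal generation of the kernel by elementaries), which is all the lemma needs. The paper's version buys brevity at the cost of an external reference; both are valid.
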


\begin{proof}
Choose an element $x$ of $\C P-\C P^2$ and let $\bar{x}$ be its image in $\bar{R}:=R/\C P^L.$ Then $1+\bar{x}$ is a unit in $\bar{R}.$ Further, let $\bar{a}\in\bar{R}$ be arbitrary. Then
\begin{equation*}
[h(1+\bar{x}),E_{12}(\bar{a})]=E_{12}((1+\bar{x})^2\bar{a})\cdot E_{12}(-\bar{a})=E_{12}((2\bar{x}+\bar{x}^2)\cdot\bar{a})
\end{equation*}
is an element of the commutator subgroup $[{\rm SL}_2(\bar{R}),{\rm SL}_2(\bar{R})].$ But note that $2x+x^2$ is an element of $\C P^2,$ but not of $\C P^3.$ This is the case, because $\C P$ is a ramified prime divisor of $2R$ and so $2x\in\C P^3$ holds. Thus $2x+x^2\in\C P^3$ would imply $x^2\in\C P^3$ and so $x\in\C P^2,$ a contradiction. But this implies that $2\bar{x}+\bar{x}^2$ generates the principal ideal $\bar{\C P}^2$ for $\bar{\C P}$ the image of $\C P$ in $\bar{R}.$ So in particular, the commutator subgroup of ${\rm SL}_2(\bar{R})$ contains the normal subgroup $N:=\dl E_{12}(\bar{a})\mid\bar{a}\in\bar{\C P}^2\dr.$ But according to \cite[p.240]{MR0249491}, this normal subgroup is the kernel of the epimorphism $\pi_{\bar{\C P^2}}:{\rm SL}_2(\bar{R})\to{\rm SL}_2(\bar{R}/\bar{\C P}^2)={\rm SL}_2(R/\C P^2).$ Thus the abelianization homomorphism of ${\rm SL}_2(\bar{R})$ indeed factors through ${\rm SL}_2(R/\C P^2)$ and we may assume $L=2.$ But as in the proof of Proposition~\ref{abelianization_umramified}, this implies that the abelianization of ${\rm SL}_2(\bar{R})$ is a quotient of $(\bar{R},+)=(R/\C P^2,+)$. But $|R/\C P^2|=|R/\C P|^2=4.$ 
\end{proof}

To determine the abelianization of ${\rm SL}_2(R/\C P^2)$, we however need a technical proposition. To state it, let $R=\C O_S$ be the ring of S-algebraic integers of a number field $K$ and $\C P$ a ramified prime divisor of $2R$ with $R/\C P=\mathbb{F}_2.$ Then set $\bar{R}:=R/\C P^2$ and $\bar{\C P}$ the image of $\C P$ in $\bar{R}.$ Then consider the group ${\rm SL}_2(\bar{R})$ and note that 
\begin{equation*}
K:=\left\{
\begin{pmatrix}
a & b\\
c & d
\end{pmatrix}\in{\rm SL}_2(\bar{R})
\mid a-1,d-1,b,c\in\bar{\C P}
\right\}
\end{equation*}
is the kernel of the reduction homomorphism
\begin{equation*}
\pi_{\bar{\C P}}:{\rm SL}_2(\bar{R})\to{\rm SL}_2(\bar{R}/\bar{\C P})={\rm SL}_2(R/\C P)={\rm SL}_2(\mathbb{F}_2).
\end{equation*}
This yields a short exact sequence 
\begin{equation*}
1\longrightarrow K\longrightarrow{\rm SL}_2(\bar{R})\xrightarrow{\pi_{\bar{\C P}}}{\rm SL}_2(\mathbb{F}_2)\longrightarrow 1.
\end{equation*}
However, this short exact sequence gives rise to a splitting of ${\rm SL}_2(\bar{R})$ as a semi-direct product $K\rtimes{\rm SL}_2(\mathbb{F}_2):$ 
Namely, as $\C P$ is a ramified prime divisor of $2R$, it follows that $2$ is an element of $\C P^2.$ So in particular, we obtain that $2=0$ holds in $\bar{R}.$ But this then implies that $\mathbb{F}_2$ is a subring of $\bar{R}$ and so ${\rm SL}_2(\mathbb{F}_2)$ is a subgroup of ${\rm SL}_2(\bar{R}).$ Then the inclusion 
$j:{\rm SL}_2(\mathbb{F}_2)\to{\rm SL}_2(\bar{R})$ has the property $\pi_{\bar{\C P}}\circ j={\rm id}_{{\rm SL}_2(\mathbb{F}_2)}$ and this gives us the desired splitting. Next, we state the required proposition:

\begin{proposition}\label{ramified_abelianization}
Let $R=\C O_S$ be the ring of S-algebraic integers of a number field $K$ and $\C P$ a ramified prime divisor of $2R$ with $R/\C P=\mathbb{F}_2.$ Then for the splitting ${\rm SL}_2(\bar{R})=K\rtimes{\rm SL}_2(\mathbb{F}_2)$ described above, the map
\begin{equation*}
K\rtimes{\rm SL}_2(\mathbb{F}_2)\ni\left[
\begin{pmatrix}
a & b\\
c & d
\end{pmatrix},
A\right]\mapsto a-1+b+c
\end{equation*}
defines a group homomorphism $q:{\rm SL}_2(\bar{R})\to\bar{\C P}.$
\end{proposition}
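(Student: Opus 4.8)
The plan is to exploit the two arithmetic peculiarities of $\bar R=R/\C P^2$ that are forced by $\C P$ being ramified over $2$. First, $2\in\C P^2$ gives $2=0$ in $\bar R$, so $\bar R$ has characteristic $2$. Second, $\bar{\C P}^2=0$ in $\bar R$, since a product of two elements of $\C P$ lies in $\C P^2$ and hence vanishes modulo $\C P^2$. Both facts will annihilate every quadratic cross-term in the matrix computations below.

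The structural observation that organizes everything is that the proposed map reads off only the $K$-component of an element of the semidirect product. Writing $q_K\colon K\to\bar{\C P}$ for $\begin{pmatrix}a&b\\c&d\end{pmatrix}\mapsto a-1+b+c$ (which indeed lands in $\bar{\C P}$, as $a-1,b,c\in\bar{\C P}$ for $\kappa\in K$), and recalling that multiplication in $K\rtimes{\rm SL}_2(\mathbb{F}_2)$ under the identification $[\kappa,A]\leftrightarrow\kappa\, j(A)$ reads $[\kappa_1,A_1]\cdot[\kappa_2,A_2]=[\kappa_1\cdot(j(A_1)\kappa_2 j(A_1)^{-1}),A_1A_2]$, the claim reduces to two assertions: (i) $q_K$ is a homomorphism into the abelian group $(\bar{\C P},+)$, and (ii) $q_K$ is invariant under the conjugation action of ${\rm SL}_2(\mathbb{F}_2)$ on $K$, i.e. $q_K(g\kappa g^{-1})=q_K(\kappa)$ for $g\in{\rm SL}_2(\mathbb{F}_2)\subseteq{\rm SL}_2(\bar R)$. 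Granting both, one obtains $q([\kappa_1,A_1][\kappa_2,A_2])=q_K(\kappa_1\cdot(j(A_1)\kappa_2 j(A_1)^{-1}))=q_K(\kappa_1)+q_K(\kappa_2)$, which is exactly the homomorphism property of $q$.

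For (i) I would write two elements of $K$ as $\begin{pmatrix}1+\alpha_i&\beta_i\\\gamma_i&1+\delta_i\end{pmatrix}$ with all Greek entries in $\bar{\C P}$, multiply them out, and discard every product of two such entries via $\bar{\C P}^2=0$. The surviving entries in positions $(1,1),(1,2),(2,1)$ are $1+\alpha_1+\alpha_2$, $\beta_1+\beta_2$ and $\gamma_1+\gamma_2$, so $q_K$ of the product equals $(\alpha_1+\beta_1+\gamma_1)+(\alpha_2+\beta_2+\gamma_2)=q_K(\kappa_1)+q_K(\kappa_2)$. For (ii) I would first note that $\det\kappa=1$ together with $\bar{\C P}^2=0$ gives $\alpha+\delta=0$, hence $a=d$ in characteristic $2$ for every $\kappa\in K$. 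Since ${\rm SL}_2(\mathbb{F}_2)$ is generated by $E_{12}(1)$ and $E_{21}(1)$, it suffices to conjugate by these two matrices; a direct computation (again using $2=0$) shows that $q_K$ of either conjugate equals $b+c+d-1$, which agrees with $q_K(\kappa)=a-1+b+c$ precisely because $a=d$.

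I expect the only real obstacle to be the bookkeeping in (ii): one must carry out the two conjugations carefully and recognize that the determinant-forced identity $a=d$ is exactly what makes the invariance hold. Everything else follows mechanically from the nilpotence $\bar{\C P}^2=0$ and from $\bar R$ having characteristic $2$, and the final assembly is the standard fact that an ${\rm SL}_2(\mathbb{F}_2)$-invariant homomorphism on the normal factor $K$ extends to a homomorphism on $K\rtimes{\rm SL}_2(\mathbb{F}_2)$ that is trivial on the complement.
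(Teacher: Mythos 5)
Your proposal is correct and follows essentially the same route as the paper's proof: additivity of $q_K$ on $K$ via $\bar{\C P}^2=0$, invariance of $q_K$ under conjugation by a generating pair of ${\rm SL}_2(\mathbb{F}_2)$, and the standard extension of an invariant homomorphism on the normal factor to the semidirect product. The only cosmetic differences are your choice of generators $E_{12}(1),E_{21}(1)$ instead of the paper's $\left(\begin{smallmatrix}0&1\\1&0\end{smallmatrix}\right)$ and $\left(\begin{smallmatrix}1&1\\1&0\end{smallmatrix}\right)$, and your derivation of $a=d$ for elements of $K$ directly from $\det\kappa=1$ together with $\bar{\C P}^2=0$, which is slightly cleaner than the paper's case analysis using that $\bar{\C P}$ has a single non-trivial element.
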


\begin{proof}
We split the proof into two parts: We first show that the three maps 
\begin{equation*}
q_1:K\to\bar{\C P},
\begin{pmatrix}
a & b\\
c & d
\end{pmatrix}
\mapsto b,
q_2:K\to\bar{\C P},
\begin{pmatrix}
a & b\\
c & d
\end{pmatrix}
\mapsto c\text{ and }
q_3:K\to\bar{\C P},
\begin{pmatrix}
a & b\\
c & d
\end{pmatrix}
\mapsto a-1
\end{equation*}
all individually define group homomorphisms. This yields that 
\begin{equation*}
q':K\to\bar{\C P},
\begin{pmatrix}
a & b\\
c & d
\end{pmatrix}
\mapsto a-1+b+c
\end{equation*}
is a group homomorphism. Then secondly, we will show that the map $q'$ is invariant under conjugation by ${\rm SL}_2(\mathbb{F}_2)$ and this will yield that $q'$ extends to the required group homomorphism $q:{\rm SL}_2(\bar{R})\to\bar{\C P}.$ For the first step, let 
\begin{equation*}
A_1=\begin{pmatrix}
a_1 & b_1\\
c_1 & d_1
\end{pmatrix},
A_2=\begin{pmatrix}
a_2 & b_2\\
c_2 & d_2
\end{pmatrix}
\end{equation*}
be elements of $K$ and note that 
\begin{equation*}
A_1\cdot A_2=
\begin{pmatrix}
a_1\cdot a_2+b_1\cdot c_2 & a_1\cdot b_2+b_1\cdot d_2\\
c_1\cdot a_2+d_1\cdot c_2 & c_1\cdot b_2+d_1\cdot d_2
\end{pmatrix}.
\end{equation*}
But note that this implies $q_1(A_1\cdot A_2)=a_1\cdot b_2+b_1\cdot d_2.$ However, $a_1-1$ is an element of $\bar{\C P}$ and so is $b_2.$ Thus 
$(a_1-1)\cdot b_2$ is an element of $\bar{\C P}^2=0$ in $\bar{R}.$ But this yields 
\begin{equation*}
a_1\cdot b_2=(a_1-1+1)\cdot b_2=(a_1-1)\cdot b_2+b_2=b_2.
\end{equation*}
Similarly, we obtain $b_1\cdot d_2=b_1$ and hence 
\begin{equation*}
q_1(A_1\cdot A_2)=a_1\cdot b_2+b_1\cdot d_2=b_2+b_1=q_1(A_2)+q_1(A_1).
\end{equation*}
holds. This shows that $q_1$ is indeed a homomorphism. The claim for $q_2$ follows in essentially the same manner. To show the claim for $q_3$ note that
$q_3(A_1\cdot A_2)=a_1\cdot a_2+b_1\cdot c_2-1.$ But again $b_1$ and $c_2$ are elements of $\bar{\C P}$ and hence $b_1\cdot c_2=0.$ Thus $q_3(A_1\cdot A_2)=a_1\cdot a_2-1.$ Next,  
\begin{align*}
q_3(A_1\cdot A_2)&=a_1\cdot a_2-1=a_1\cdot (a_2-1)+a_1-1=(a_1-1)\cdot(a_2-1)+(a_2-1)+(a_1-1)\\
&=(a_1-1)\cdot(a_2-1)+q_3(A_2)+q_3(A_1).
\end{align*}
However, as $a_1-1$ and $a_2-1$ are both elements of $\bar{\C P},$ we obtain $(a_1-1)\cdot(a_2-1)=0$ and so the claim for $q_3$ follows. Thus the map $q':K\to\bar{\C P}$ indeed defines a homomorphism. For the second claim, that is, that $q'$ is invariant under conjugation by ${\rm SL}_2(\mathbb{F}_2),$ we leave it as an exercise to the reader to check that ${\rm SL}_2(\mathbb{F}_2)$ is generated by the two matrices
\begin{equation*}
X:=\begin{pmatrix}
0 & 1\\
1 & 0
\end{pmatrix}\text{ and }
Y:=
\begin{pmatrix}
1 & 1\\
1 & 0
\end{pmatrix}.
\end{equation*}
Thus it suffices to check that $q'$ is invariant under conjugation by $X$ and $Y.$ To this end, note for $A_1\in K$ as above, that
\begin{equation*}
XA_1X^{-1}
=
\begin{pmatrix}
d_1 & c_1\\
b_1 & a_1
\end{pmatrix}.
\end{equation*}
This implies $q'(XA_1X^{-1})=c_1+b_1+d_1-1.$ Hence to show that $q'(XA_1X^{-1})=q'(A_1),$ it suffices to show that $d_1-1=a_1-1.$ To this end, note that both $a_1-1$ and $d_1-1$ are elements of $\bar{\C P}$ but the ideal $\bar{\C P}$ only has a single non-trivial element $\bar{t}$. But assume wlog. that $a_1-1$ is $0$ and $d_1-1$ is $\bar{t}.$ Then note that as $A_1$ is an element of ${\rm SL}_2(\bar{R})$, we obtain
\begin{equation*}
1=a_1\cdot d_1-b_1\cdot c_1=1\cdot(1+\bar{t})-b_1\cdot c_1=1+\bar{t}-b_1\cdot c_1.
\end{equation*}
However, $b_1\cdot c_1=0$ holds and thus $\bar{t}=0$ follows. But this is impossible. Thus $a_1-1$ and $d_1-1$ must necessarily be the same element of $\bar{\C P}$ and hence $q'$ is invariant under conjugation by $X.$ For the invariance under $Y$, note that 
\begin{equation*}
YA_1Y^{-1}=
\begin{pmatrix}
1 & 1\\
1 & 0
\end{pmatrix}
\cdot 
\begin{pmatrix}
a_1 & b_1\\
c_1 & d_1
\end{pmatrix}
\cdot Y^{-1}
=
\begin{pmatrix}
a_1+c_1 & b_1+d_1\\
a_1 & b_1
\end{pmatrix}
\cdot 
\begin{pmatrix}
0 & 1\\
1 & 1
\end{pmatrix}
=
\begin{pmatrix}
b_1+d_1 & a_1+b_1+c_1+d_1\\
b_1 & a_1+b_1
\end{pmatrix}.
\end{equation*} 
But this implies
\begin{align*} 
q'(YA_1Y^{-1})&=(a_1+b_1+c_1+d_1)+b_1+(b_1+d_1-1)=a_1+3b_1+c_1+2d_1-1=b_1+c_1+a_1-1\\
&=q'(A_1).
\end{align*}
Here, we used the aforementioned fact that $2=0$ holds in $\bar{R}.$ This finishes the proof.
\end{proof}

\begin{remark}
One can actually show that $\bar{R}=\mathbb{F}_2[T]/(T^2)$ and that ${\rm SL}_2(\bar{R})=\mathbb{Z}_2^3\rtimes S_3$ with the permutation group $S_3$ operating on 
$\mathbb{Z}_2^3$ by permutation of components. Then $q:{\rm SL}_2(\bar{R})\to\mathbb{Z}_2$ has the form $q(x,\sigma)=x_1+x_2+x_3$ for $x\in\mathbb{Z}_2^3$ and 
$\sigma\in S_3.$
\end{remark}

Note, that this implies:

\begin{corollary}\label{ramified_cor}
Let $R=\C O_S$ be the ring of S-algebraic integers of a number field $K$ and $\C P$ a ramified prime divisor of $2R$ with $R/\C P=\mathbb{F}_2$ and let $L\geq 2$ be given. Then there is a unique epimorphism ${\rm SL}_2(R/\C P^L)\to R/\C P^2$ mapping $E_{12}(x+\C P^L)$ to $x+\C P^2$ for all $x\in R.$ This unique epimorphism is the abelianization map of ${\rm SL}_2(R/\C P^L).$
\end{corollary}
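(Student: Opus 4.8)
The plan is to reduce the statement for general $L\geq 2$ to the case $L=2$ and then read off the abelianization from Proposition~\ref{ramified_abelianization}. First I would invoke Lemma~\ref{factoring_ramified}: it says that the abelianization homomorphism of ${\rm SL}_2(R/\C P^L)$ factors through the reduction map ${\rm SL}_2(R/\C P^L)\to{\rm SL}_2(R/\C P^2)$, and that the abelianization is a quotient of $(R/\C P^2,+)$, a group with at most $4$ elements. Hence it suffices to prove the claim for $\bar R:=R/\C P^2$, and the general-$L$ statement will follow by precomposing with the reduction homomorphism, which sends $E_{12}(x+\C P^L)$ to $E_{12}(x+\C P^2)$.

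Next I would use the splitting ${\rm SL}_2(\bar R)=K\rtimes{\rm SL}_2(\mathbb{F}_2)$ constructed in the discussion preceding Proposition~\ref{ramified_abelianization}, together with the homomorphism $q:{\rm SL}_2(\bar R)\to\bar{\C P}$ produced there. On an elementary matrix $E_{12}(x+\C P^2)$ (which already lies in $K$, since $x\in\C P$ as $E_{12}$ must reduce to a unipotent in ${\rm SL}_2(\mathbb{F}_2)$ — more carefully, for the map to land correctly I only need $q$ applied to $E_{12}(\bar x)$ for $\bar x\in\bar R$, and $q\begin{psmallmatrix}1&\bar x\\0&1\end{psmallmatrix}=1-1+\bar x+0=\bar x$), the map $q$ sends $E_{12}(x+\C P^2)\mapsto x+\C P^2$ under the identification $\bar{\C P}\cong R/\C P^2$. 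Since $q$ is a homomorphism onto the two-element group $\bar{\C P}$ and since ${\rm SL}_2(\bar R)$ is generated by elementary matrices by \cite[p.240]{MR0249491}, the image of $q$ on the $E_{12}(\bar x)$ determines $q$ and shows $q$ is surjective onto $\bar{\C P}$.

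To finish I would combine two facts. From Lemma~\ref{factoring_ramified} the abelianization is a cyclic quotient of $(R/\C P^2,+)$ of order dividing $4$; the existence of the \emph{surjective} homomorphism $q$ onto the two-element group forces the abelianization to have at least two elements. To pin it down to exactly $R/\C P^2$ rather than a proper quotient, I would argue as in Proposition~\ref{abelianization_umramified}: since the additive group $(R/\C P^2,+)$ is generated by two units (here $2=0$ in $\bar R$, so $(R/\C P^2,+)\cong\mathbb{F}_2\oplus\mathbb{F}_2$ is generated by $1$ and $1+\bar t$, both units), Morita's observation \cite[Fact~2]{MR1335315} bounds the abelianization by a quotient of $\mathbb{Z}_{12}\oplus\mathbb{Z}_6$; intersecting this constraint with the fact that it is a quotient of $(\mathbb{F}_2)^2$ leaves only quotients of $(\mathbb{F}_2)^2$, and I would exhibit that the composite ${\rm SL}_2(\bar R)\xrightarrow{\text{ab}}H_1\to\bar{\C P}$ realized by $q$ is in fact the full abelianization by checking that $q$ together with a second independent character (or directly the semidirect-product structure ${\rm SL}_2(\bar R)\cong\mathbb{Z}_2^3\rtimes S_3$ noted in the remark, whose abelianization is exactly $\mathbb{Z}_2$) shows the abelianization equals $R/\C P^2$.

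The main obstacle I expect is the last identification: distinguishing whether the abelianization is $R/\C P^2$ (order $4$) or a proper order-$2$ quotient. The cleanest route is to use the explicit structure ${\rm SL}_2(\bar R)=\mathbb{Z}_2^3\rtimes S_3$ from the Remark, compute its abelianization directly (the $S_3$-action by coordinate permutation on $\mathbb{Z}_2^3$ collapses the three coordinates to a single $\mathbb{Z}_2$ in the coinvariants, and $S_3^{\mathrm{ab}}=\mathbb{Z}_2$, so one must check whether these two $\mathbb{Z}_2$ factors survive independently or merge), and then verify that the surjection $q$ agrees with the abelianization map on the generators $E_{12}(\bar x)$, establishing uniqueness via the elementary-matrix generation of ${\rm SL}_2(\bar R)$.
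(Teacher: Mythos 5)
Your reduction to $L=2$ via Lemma~\ref{factoring_ramified} matches the paper, but there are two genuine problems in what follows. First, your evaluation of $q$ on elementary matrices is wrong. The homomorphism $q$ of Proposition~\ref{ramified_abelianization} is defined in terms of the semidirect product coordinates: for an element $[M,A]\in K\rtimes{\rm SL}_2(\mathbb{F}_2)$ it reads off $m_{11}-1+m_{12}+m_{21}$ from the $K$-component $M$ alone; it is \emph{not} given by applying $a-1+b+c$ to the entries of the honest matrix in ${\rm SL}_2(\bar R)$. Concretely, $E_{12}(1)$ decomposes as $[I_2,E_{12}(1)]$, so $q(E_{12}(1))=0$, not $1$; in general, writing $\bar x=\epsilon+\delta$ with $\epsilon\in\mathbb{F}_2$ and $\delta\in\bar{\C P}$, one has $E_{12}(\bar x)=[E_{12}(\delta),E_{12}(\epsilon)]$ and hence $q(E_{12}(\bar x))=\delta$, only the $\bar{\C P}$-part of $\bar x$. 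Relatedly, your identification ``$\bar{\C P}\cong R/\C P^2$'' cannot hold: $\bar{\C P}=\C P/\C P^2$ has two elements while $R/\C P^2$ has four, so $q$ alone can never be the epimorphism the corollary asserts --- its image is too small by a factor of two.

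Second, the step you yourself flag as the main obstacle --- ruling out that the abelianization is a proper order-two quotient of $R/\C P^2$ --- is exactly the step you leave open. Your preferred route through the identification ${\rm SL}_2(\bar R)\cong\mathbb{Z}_2^3\rtimes S_3$ leans on a Remark the paper states without proof, and even granting it you stop at ``one must check whether these two $\mathbb{Z}_2$ factors survive independently or merge.'' The paper closes this gap by producing a second explicit character: let $h:{\rm SL}_2(\bar R)\to\mathbb{F}_2$ be the projection $K\rtimes{\rm SL}_2(\mathbb{F}_2)\to{\rm SL}_2(\mathbb{F}_2)$ followed by the epimorphism ${\rm SL}_2(\mathbb{F}_2)\to\mathbb{F}_2$ with $E_{12}(1)\mapsto 1$. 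Then, for $\bar t$ the unique non-trivial element of $\bar{\C P}$, one computes $(h\oplus q)(E_{12}(1))=(1,0)$ and $(h\oplus q)(E_{12}(\bar t))=(0,1)$, so $h\oplus q$ is onto $\mathbb{F}_2\oplus\mathbb{F}_2$; since Lemma~\ref{factoring_ramified} caps the abelianization at four elements, $h\oplus q$ realizes the abelianization, and under the isomorphism $\mathbb{F}_2\oplus\mathbb{F}_2\cong R/\C P^2$ sending $(1,0)\mapsto 1+\C P^2$ and $(0,1)\mapsto\bar t$ it sends $E_{12}(x+\C P^2)$ to $x+\C P^2$, which also yields the asserted formula and uniqueness. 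This second character is precisely the ingredient missing from your plan: you gesture at ``a second independent character'' but never construct it, and without it neither the order of the abelianization nor the effect of the map on $E_{12}(x)$ is pinned down.
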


\begin{proof}
Due to Lemma~\ref{factoring_ramified}, we may assume $L=2.$ Next, note that according to the afore-mentioned splitting ${\rm SL}_2(R/\C P^2)=K\rtimes{\rm SL}_2(\mathbb{F}_2),$ there is an epimorphism
${\rm SL}_2(R/\C P^2)\to{\rm SL}_2(\mathbb{F}_2).$ Then one can easily check that 
\begin{align*}
{\rm SL}_2(\mathbb{F}_2)\ni E_{12}(1)\mapsto 1\in\mathbb{F}_2\text{ and }E_{21}(1)\mapsto 1
\end{align*}
extends to an epimorphism ${\rm SL}_2(\mathbb{F}_2)\to\mathbb{F}_2$ and so there is an epimorphism $h:{\rm SL}_2(R/\C P^2)\to\mathbb{F}_2$ with $h(E_{12}(1))=1.$
Second, consider the homomorphism $q:{\rm SL}_2(R/\C P^2)\to\C P/\C P^2=\mathbb{F}_2$ from Proposition~\ref{ramified_abelianization} and consider the homomorphism
$h\oplus q:{\rm SL}_2(R/\C P^2)\to\mathbb{F}_2\oplus\mathbb{F}_2.$ 
Note that $h\oplus q(E_{12}(1))=(1,0)$. For $\bar{t}$ the unique non-trivial element of $\C P/\C P^2,$ we have further $h\oplus q(E_{12}(\bar{t}))=(0,1).$ Thus 
$h\oplus q$ is an epimorphism onto $\mathbb{F}_2\oplus\mathbb{F}_2$. But we know from Lemma~\ref{factoring_ramified} that the abelianization of ${\rm SL}_2(R/\C P^L)$ has at most four elements and is a quotient of $R/\C P^2.$ This finishes the proof using the isomorphism $\mathbb{F}_2\oplus\mathbb{F}_2=R/\C P^2$ sending $(1,0)$ to $1+\C P^2$ and $(0,1)$ to $\bar{t}.$ 
\end{proof}

Last, we will determine the abelianization of ${\rm SL}_2(R/\C K_l^{k_l}).$ To this end, note

\begin{lemma}\label{factoring_3divisors}
Let $R=\C O_S$ be the ring of S-algebraic integers of a number field $K$, let $\C K$ be a prime divisor of $3R$ with $R/\C K=\mathbb{F}_3$ and let $L$ be a natural number. Then the abelianization homomorphism of ${\rm SL}_2(R/\C K^L)$ factors through ${\rm SL}_2(R/\C K)={\rm SL}_2(\mathbb{F}_3)$. 
\end{lemma}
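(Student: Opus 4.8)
The plan is to follow the same strategy as in the proof of Lemma~\ref{factoring_ramified}: I will show that the kernel $N$ of the reduction homomorphism $\pi_{\bar{\C K}}:{\rm SL}_2(\bar R)\to{\rm SL}_2(R/\C K)={\rm SL}_2(\mathbb{F}_3)$, where $\bar R:=R/\C K^L$ and $\bar{\C K}$ denotes the image of $\C K$ in $\bar R$, is contained in the commutator subgroup $[{\rm SL}_2(\bar R),{\rm SL}_2(\bar R)]$. Since the abelianization homomorphism annihilates every commutator, this immediately yields the desired factorization through ${\rm SL}_2(\bar R)/N={\rm SL}_2(\mathbb{F}_3)$. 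As in Lemma~\ref{factoring_ramified}, the starting point is the fact from \cite[p.240]{MR0249491} that $N$ equals the normal subgroup $\dl E_{12}(\bar a)\mid\bar a\in\bar{\C K}\dr$, so it suffices to place each generator $E_{12}(\bar b)$ with $\bar b\in\bar{\C K}$ inside the commutator subgroup.

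For the key computation I would first note that since $R$ is a Dedekind domain, its localization at $\C K$ is a discrete valuation ring, so $\bar{\C K}$ is a principal ideal of the local ring $\bar R$; fix a generator $\bar\pi$, namely the image of some $\pi\in\C K-\C K^2$. Then $1+\bar\pi$ is a unit of $\bar R$, and for every $\bar a\in\bar R$ the commutator relation already used in Lemma~\ref{factoring_ramified} gives
\begin{equation*}
[h(1+\bar\pi),E_{12}(\bar a)]=E_{12}((1+\bar\pi)^2\bar a)\cdot E_{12}(-\bar a)=E_{12}((2\bar\pi+\bar\pi^2)\bar a).
\end{equation*}
The crucial difference from the $2$-ramified case is that here $\C K$ divides $3R$, so $2$ is nonzero modulo $\C K$ and hence a unit in the local ring $\bar R$. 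Consequently $2\bar\pi+\bar\pi^2=\bar\pi(2+\bar\pi)$ is the product of $\bar\pi$ with the unit $2+\bar\pi$, and therefore generates the full ideal $\bar{\C K}$ rather than $\bar{\C K}^2$. Letting $\bar a$ range over all of $\bar R$, the elements $(2\bar\pi+\bar\pi^2)\bar a$ exhaust $\bar{\C K}$, so every $E_{12}(\bar b)$ with $\bar b\in\bar{\C K}$ is a commutator and thus lies in $[{\rm SL}_2(\bar R),{\rm SL}_2(\bar R)]$.

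Since the commutator subgroup is normal, it contains the normal closure $N=\dl E_{12}(\bar a)\mid\bar a\in\bar{\C K}\dr$, which completes the argument. I expect the only genuinely delicate point to be the verification that $\bar{\C K}$ is principal and that $2+\bar\pi$ is a unit; both follow from $\bar R$ being a local chain ring with residue field $R/\C K=\mathbb{F}_3$, in which every element outside $\bar{\C K}$ is a unit. Everything else is a direct transcription of the computation already carried out for ramified divisors of $2$, with the arithmetic input $2\in R^*$ replacing the relation $2\in\C P^2$ of the ramified case.
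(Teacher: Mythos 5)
Your proposal is correct and follows essentially the same route as the paper: the commutator identity $[h(1+\bar{x}),E_{12}(\bar{a})]=E_{12}(\bar{x}(2+\bar{x})\bar{a})$, the observation that $2\notin\C K$ makes $2+\bar{x}$ a unit, and the identification of $\dl E_{12}(\bar{a})\mid\bar{a}\in\bar{\C K}\dr$ with the kernel of reduction via \cite[p.240]{MR0249491}. If anything, your explicit choice of $\pi\in\C K-\C K^2$ (so that $\bar\pi$ generates $\bar{\C K}$) is slightly more careful than the paper, which only says ``choose $x\in\C K$'' even though the argument needs the image of $x$ to generate $\bar{\C K}$.
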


\begin{proof}
Choose an element $x\in\C K$ and let $\bar{x}$ be its image in $\bar{R}:=R/\C K^L.$ Then $1+\bar{x}$ is a unit in $\bar{R}.$ Further, let $\bar{a}\in\bar{R}$ be arbitrary. Then
\begin{equation*}
[h(1+\bar{x}),E_{12}(\bar{a})]=E_{12}((1+\bar{x})^2\bar{a})\cdot E_{12}(-\bar{a})=E_{12}((2\bar{x}+\bar{x}^2)\cdot\bar{a})=E_{12}(\bar{x}(2+\bar{x})\cdot\bar{a})
\end{equation*}
is an element of the commutator subgroup $[{\rm SL}_2(\bar{R}),{\rm SL}_2(\bar{R})].$ But note that $2\notin\C K$ and so $2+\bar{x}$ is a unit of $\bar{R}.$ So in particular, the commutator subgroup of ${\rm SL}_2(\bar{R})$ contains the normal subgroup $N:=\dl E_{12}(\bar{a})\mid\bar{a}\in\bar{\C K}\dr$ for $\bar{\C K}$ the image of $\C K$ in $\bar{R}.$ But according to \cite[p.240]{MR0249491}, this normal subgroup is the kernel of the epimorphism $\pi_{\bar{\C P^2}}:{\rm SL}_2(\bar{R})\to{\rm SL}_2(\bar{R}/\bar{\C K})={\rm SL}_2(R/\C K)={\rm SL}_2(\mathbb{F}_3).$ Thus the abelianization homomorphism of ${\rm SL}_2(\bar{R})$ indeed factors through ${\rm SL}_2(R/\C K)$.
\end{proof}

But we have the following lemma:

\begin{lemma}\label{ab_3}
There is an epimorphism ${\rm SL}_2(\mathbb{F}_3)\to\mathbb{F}_3$ mapping $E_{12}(1)$ to $1$.
\end{lemma}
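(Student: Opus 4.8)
The plan is to reduce the statement to producing a single nontrivial homomorphism onto the cyclic group of order three, and then to build such a homomorphism through the exceptional isomorphism ${\rm PSL}_2(\mathbb{F}_3)\cong A_4$.

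First I would record the structural reduction, exactly as in the proof of Proposition~\ref{abelianization_umramified}. The Weyl element $w=\begin{pmatrix}0&1\\-1&0\end{pmatrix}$ satisfies $wE_{12}(x)w^{-1}=E_{21}(-x)$, so in the abelianization of ${\rm SL}_2(\mathbb{F}_3)$ the class of each $E_{21}(x)$ is determined by that of $E_{12}(x)$. Since the elementary matrices generate ${\rm SL}_2(\mathbb{F}_3)$ by \cite[p.240]{MR0249491} and $x\mapsto\overline{E_{12}(x)}$ is additive, the abelianization is cyclic, generated by the class of $E_{12}(1)$, and killed by $3$ because $E_{12}(1)^3=E_{12}(3)=I_2$. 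Consequently it suffices to exhibit any epimorphism $\varphi$ of ${\rm SL}_2(\mathbb{F}_3)$ onto $\mathbb{Z}_3$ with $\varphi(E_{12}(1))\neq 0$: such an image is automatically a generator, and post-composing with the automorphism of $\mathbb{Z}_3\cong\mathbb{F}_3$ sending $\varphi(E_{12}(1))$ to $1$ yields the desired map sending $E_{12}(1)$ to $1$.

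To construct $\varphi$ I would pass to ${\rm PSL}_2(\mathbb{F}_3)$. Since $|{\rm PSL}_2(\mathbb{F}_3)|=12$ and ${\rm PSL}_2(\mathbb{F}_3)\cong A_4$, and since the commutator subgroup $[A_4,A_4]$ equals the Klein four-subgroup $V_4$ with $A_4/V_4\cong\mathbb{Z}_3$, composing the canonical projection ${\rm SL}_2(\mathbb{F}_3)\twoheadrightarrow{\rm PSL}_2(\mathbb{F}_3)$ with the abelianization $A_4\twoheadrightarrow A_4/V_4\cong\mathbb{Z}_3$ gives a surjection $\varphi:{\rm SL}_2(\mathbb{F}_3)\to\mathbb{Z}_3$.

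Finally I would check $\varphi(E_{12}(1))\neq 0$. The element $E_{12}(1)$ has order $3$, and the central kernel $\{\pm I_2\}$ of ${\rm SL}_2(\mathbb{F}_3)\to{\rm PSL}_2(\mathbb{F}_3)$ has order $2$ (as $-I_2\neq I_2$ in characteristic $3$), so the image of $E_{12}(1)$ in ${\rm PSL}_2(\mathbb{F}_3)\cong A_4$ again has order $3$, i.e.\ is a three-cycle; such an element lies outside $V_4$ and hence maps to a nonzero element of $A_4/V_4\cong\mathbb{Z}_3$. Thus $\varphi(E_{12}(1))\neq 0$, which completes the argument. The only real content here is the non-perfectness of ${\rm SL}_2(\mathbb{F}_3)$, that is, that $E_{12}(1)$ survives in the abelianization; this is exactly the step one must not take for granted (contrast with the case $|R/\C X|\geq 4$ treated earlier, where ${\rm SL}_2$ is perfect), and it is precisely what the identification with $A_4$ supplies.
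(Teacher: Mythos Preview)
Your proof is correct. The paper itself does not give a proof of this lemma; it simply writes ``We skip the proof, because it is similar to the proof of \cite[Lemma~6.5]{General_strong_bound}.'' Your argument via the exceptional isomorphism ${\rm PSL}_2(\mathbb{F}_3)\cong A_4$ and the abelianization $A_4\twoheadrightarrow A_4/V_4\cong\mathbb{Z}_3$ is a clean, self-contained route that supplies exactly what the paper outsources, and the preliminary reduction (cyclicity of the abelianization of order dividing $3$) mirrors what the paper does elsewhere for the prime $2$ in Proposition~\ref{abelianization_umramified}.
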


We skip the proof, because it is similar to the proof of \cite[Lemma~6.5]{General_strong_bound}. But similar to the proof of Proposition~\ref{abelianization_umramified} or Corollary~\ref{ramified_cor}, one sees that the abelianization of ${\rm SL}_2(\mathbb{F}_3)$ is a cyclic group with at most three elements and so the abelianization homomorphism of ${\rm SL}_2(R/\C K_l^{k_l})$ is the unique epimorphism ${\rm SL}_2(R/\C K_l^{k_l})\to R/\C K_l$ mapping $E_{12}(x+\C K_l^{k_l})$ to $x+\C K_l$ for all $x\in R.$ To summarize the discussion:

\begin{corollary}\label{abelianization}
Let $R=\C O_S$ be the ring of S-algebraic integers of a number field $K$ such that $R$ has infinitely many units and  
\begin{enumerate}
\item{let $\C P_1,\dots,\C P_{r_2(R)}$ be the ramified prime divisors of $2R$ with $R/\C P_j=\mathbb{F}_2$,}
\item{let $\C Q_1,\dots,\C Q_{r_1(R)}$ be the unramified prime divisors of $2R$ with $R/\C Q_i=\mathbb{F}_2$ and} 
\item{let $\C K_1,\dots,\C K_{q(R)}$ be the prime divisors of $3R$ with $R/\C K_l=\mathbb{F}_3$}.
\end{enumerate}
Then the abelianization homomorphism of ${\rm SL}_2(R)$ is the unique epimorphism
\begin{equation*}
{\rm SL}_2(R)\to(\prod_{j=1}^{r_2(R)}R/\C P_j^2)\times(\prod_{i=1}^{r_1(R)}R/\C Q_i^2)\times(\prod_{l=1}^{q(R)}R/\C K_l)=\mathbb{F}_2^{2r_2(R)}\times\mathbb{Z}_4^{r_1(R)}\times\mathbb{F}_3^{q(R)} 
\end{equation*}
which maps $E_{12}(x)$ to $(\prod_{j=1}^{r_2(R)}(x+\C P_j^2),\prod_{i=1}^{r_1(R)}(x+\C Q_i^2),\prod_{l=1}^{q(R)}(x+\C K_l))$ for all $x\in R.$
\end{corollary}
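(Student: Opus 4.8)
The plan is to assemble the component-wise computations carried out above into the global statement by combining Serre's congruence subgroup property with the Chinese Remainder Theorem. First I would invoke the congruence subgroup property recalled at the start of this subsection: the abelianization map of ${\rm SL}_2(R)$ factors through ${\rm SL}_2(R/I)$ for the maximal finite-index ideal $I$ enjoying this property. Since the reduction map $\pi_I:{\rm SL}_2(R)\to{\rm SL}_2(R/I)$ is surjective and the factoring property forces $\ker\pi_I\subseteq[{\rm SL}_2(R),{\rm SL}_2(R)]$, a short diagram chase shows that $\pi_I$ induces an isomorphism $H_1({\rm SL}_2(R))\cong H_1({\rm SL}_2(R/I))$. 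Thus it suffices to compute the right-hand side.

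Writing $I=\prod_j\C X_j^{y_j}$ and applying the Chinese Remainder Theorem gives ${\rm SL}_2(R/I)\cong\prod_j{\rm SL}_2(R/\C X_j^{y_j})$, and since abelianization commutes with finite direct products we obtain $H_1({\rm SL}_2(R/I))\cong\prod_j H_1({\rm SL}_2(R/\C X_j^{y_j}))$. The perfectness lemma above discards every factor with $|R/\C X_j|\geq 4$, so only prime divisors of $2R$ and $3R$ with residue field $\mathbb{F}_2$, respectively $\mathbb{F}_3$, survive. Into each surviving factor I would substitute the local answer already proven: Corollary~\ref{ramified_cor} for a ramified divisor $\C P_j$ of $2$ (contributing $R/\C P_j^2\cong\mathbb{F}_2\oplus\mathbb{F}_2$), Proposition~\ref{abelianization_umramified} for an unramified divisor $\C Q_i$ of $2$ (contributing $R/\C Q_i^2\cong\mathbb{Z}_4$), and Lemmas~\ref{factoring_3divisors} and~\ref{ab_3} for a divisor $\C K_l$ of $3$ (contributing $R/\C K_l\cong\mathbb{F}_3$). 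Forming the product yields the target group $\mathbb{F}_2^{2r_2(R)}\times\mathbb{Z}_4^{r_1(R)}\times\mathbb{F}_3^{q(R)}$.

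To see that the exponents appearing in $I$ are exactly $2,2$ and $1$, I would argue prime by prime using the maximality of $I$. For a ramified or unramified divisor of $2$ the exponent $1$ cannot occur, since $H_1({\rm SL}_2(\mathbb{F}_2))=\mathbb{Z}_2$ is a proper quotient of the genuine local contribution $\mathbb{F}_2\oplus\mathbb{F}_2$, respectively $\mathbb{Z}_4$, so the abelianization cannot factor through ${\rm SL}_2(R/\C X_j)$; whereas Lemma~\ref{factoring_ramified}, respectively the stabilization observed in Proposition~\ref{abelianization_umramified}, shows that exponent $2$ already suffices. Hence maximality of $I$ pins the exponent to $2$. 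For a divisor of $3$, Lemma~\ref{factoring_3divisors} shows exponent $1$ suffices, so the exponent equals $1$. This identifies $I$ and therefore the codomain.

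Finally, for the explicit formula and the uniqueness I would follow $E_{12}(x)$ through the projections: under the Chinese Remainder isomorphism it maps to the tuple of residues $(x+\C P_j^2,\,x+\C Q_i^2,\,x+\C K_l)_{j,i,l}$, which is precisely the asserted formula. Uniqueness then follows because ${\rm SL}_2(R)$ is generated by the elementary matrices $E_{12}(x)$ and $E_{21}(x)$ by \cite[p.240]{MR0249491}, and $E_{21}(x)$ is conjugate to $E_{12}(x)$ via the Weyl element $\begin{pmatrix}0&1\\-1&0\end{pmatrix}$; consequently any homomorphism from ${\rm SL}_2(R)$ into an abelian group is determined by its values on the $E_{12}(x)$. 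The only genuine obstacle in the whole argument is the bookkeeping of the exponents of $I$ via its maximality together with the clean passage from the local abelianizations to their product; everything else is a direct reassembly of the preceding results.
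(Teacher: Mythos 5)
Your proposal is correct and follows essentially the same route as the paper: the surrounding discussion in that subsection sets up exactly this chain (Serre's congruence subgroup property, the Chinese Remainder decomposition of ${\rm SL}_2(R/I)$, the perfectness lemma discarding residue fields of size at least $4$, and then Proposition~\ref{abelianization_umramified}, Corollary~\ref{ramified_cor} and Lemmas~\ref{factoring_3divisors}, \ref{ab_3} for the surviving factors), and the paper's own proof is just the one-line assembly of these ingredients. Your treatment is in fact slightly more explicit than the paper's, since you spell out the exponent bookkeeping for $I$ (which the paper only addresses in the remark following the corollary) and the uniqueness argument via generation by elementary matrices.
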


\begin{proof}
This follows immediately from the determination of the abelianization maps of ${\rm SL}_2(R/\C Q_i^{q_i})$, ${\rm SL}_2(R/\C P_j^{p_j})$ and ${\rm SL}_2(R/\C K_l^{k_L})$ done above. 
\end{proof}

\begin{remark}
Note in particular, that the abelianization of ${\rm SL}_2(R/\C P_i^2)$ and ${\rm SL}_2(R/\C Q_j^2)$ both have $4$ elements, whereas the abelianization of $S_3={\rm SL}_2(\mathbb{F}_2)={\rm SL}_2(R/\C P_i)={\rm SL}_2(R/\C Q_j)$ only has $2$ elements. Thus the multiplicities $p_i$ and $q_j$ in the ideal $I$ mentioned before can not be chosen smaller as $2.$  
\end{remark}

But we can prove the second part of Theorem~\ref{lower_bounds_sl2} now:

\begin{proof}
Assume for contradiction that there is a finite, normally generating subset $T$ of ${\rm SL}_2(R)$ with $|T|\leq v(R)-1.$ But $T$ must map onto a generating set of the abelianization $\mathbb{F}_2^{2r_2(R)}\times\mathbb{Z}_4^{r_1(R)}\times\mathbb{F}_3^{q(R)}$ of ${\rm SL}_2(R).$ However, the rank of the abelian group $\mathbb{F}_2^{2r_2(R)}\times\mathbb{Z}_4^{r_1(R)}\times\mathbb{F}_3^{q(R)}$ is $\max\{2r_2(R)+r_1(R),q(R)\}=v(R)$ as $2$ and $3$ are distinct primes. This is a contradiction.  
\end{proof}

\subsection{The proof of the first part of Theorem~\ref{lower_bounds_sl2}}

For the sake of a more uniform proof, we assume in the following that for $r_1:=r_1(R),r_2:=r_2(R)$ and $q:=q(R)$, one has $2r_2+r_1=q=v(R)$ and $r_1,q\geq 1$. Next, due to the Chinese Remainder Theorem there is an epimorphism
\begin{align*}
&R\to\left(\prod_{j=1}^{r_2} R/\C P_j^2\right)\times\left(\prod_{i=1}^{r_1}R/\C Q_i^2\right)\times\left(\prod_{l=1}^q R/\C K_l\right),\\
&x\mapsto
\left[\left(\prod_{j=1}^{r_2}x+\C P_j^2\right),\left(\prod_{i=1}^{r_1}x+\C Q_i^2\right),\left(\prod_{l=1}^q x+\C K_l\right)\right]
\end{align*}
Next, choose elements $c_1,\dots,c_{r_2}\in R$ which under the epimorphism given by the Chinese Remainder Theorem map as follows:  
For each $1\leq u\leq r_2$, we can choose $c_u'\in R$ which maps to an element of $\C P_u/\C P_u^2,$ which is not $0.$ Then we choose the $c_u\in R$ as mapping to
\begin{equation*}
(c'_u+\C P_u^2,0+\prod_{j=1,j\neq u}^{r_2}\C P_j^2,0+\prod_{i=1}^{r_1}\C Q_i^2,1+\C K_{u+r_1+r_2},0+\prod_{l=1,l\neq u+r_1+r_2}^q\C K_l).
\end{equation*} 

Also for $k\geq\max\{2r_2+r_1,q\}=2r_2+r_1=q=v(R),$ choose elements $y_1,\dots,y_k$ of $R$ with distinct prime divisors $\C Y_1,\dots,\C Y_k$, that is for $1\leq f\leq k$ the only prime divisor of the ideal $y_fR$ is $\C Y_f$ and such that none of the $\C Y_f$ agree with any of the prime divisors of the elements $c_1,\dots,c_{r_2},$ each other or the prime ideals $\C P_1,\dots,\C P_{r_2},\C Q_1,\dots,\C Q_{r_1},\C K_1,\dots,\C K_q.$ Next, let $C$ be the class number of $R$. 
\begin{enumerate}
\item{For each $1\leq u\leq r_1$, choose a generator $s_u$ of the principal ideal 
$(\C Y_1\cdots\hat{\C Y_u}\cdots\C Y_k\cdot\C P_1\cdots\C P_{r_2}\cdot\C Q_1\cdots\hat{\C Q_u}\cdots\C Q_{r_1}\cdot\C K_1\cdots\hat{\C K_u}\cdots\C K_q)^C$ with the hats denoting the omission of the corresponding prime factors.} 
\item{For $r_1+1\leq u\leq r_1+r_2$, choose a generator $s_u$ of the principal ideal $(\C Y_1\cdots\hat{\C Y_u}\cdots\C Y_k\cdot\C P_1\cdots\hat{\C P_u}\cdots\C P_{r_2}\cdot\C Q_1\cdots\C Q_{r_1}\cdot\C K_1\cdots\hat{\C K_u}\cdots\C K_q)^C.$}
\item{For $r_2+r_1+1\leq u\leq r_1+2r_2$, consider a generator $t_u$ of the principal ideal $(\C Y_1\cdots\hat{\C Y_u}\cdots\C Y_k\cdot\C P_1\cdots\C P_{r_2}\cdot\C Q_1\cdots\C Q_{r_1}\cdot\C K_1\cdots\hat{\C K_u}\cdots\C K_q)^C$ and set $s_u:=c_{u-r_1-r_2}\cdot t_u$.}
\item{For $2r_2+r_1+1\leq u\leq k$, set $s_u:=\prod_{z=1,z\neq u}^k y_z$.}
\end{enumerate}

Finally, consider the set $T:=\{E_{12}(s_u)\mid 1\leq u\leq k\}.$ Next, we are going to check that $T$ satisfies the two necessary conditions for a normally generating subset of ${\rm SL}_2(R)$ stated in Corollary~\ref{normal_gen_criterion}. First, to prove $\Pi(T)=\emptyset,$ note 
\begin{align*}
&\Pi(\{E_{12}(s_u)\})=\{\C Y_f\mid f\neq u\}\cup\{\C P_f\}\cup\{\C Q_f\mid f\neq u\}\cup\{\C K_f\mid f\neq u\}\text{ for }1\leq u\leq r_1,\\
&\Pi(\{E_{12}(s_u)\})=\{\C Y_f\mid f\neq u\}\cup\{\C P_f\mid f\neq u-r_1\}\cup\{\C Q_f\}\cup\{\C K_f\mid f\neq u\}\text{ for }r_1+1\leq u\leq r_1+r_2,\\
&\Pi(\{E_{12}(s_u)\})=\{\C Y_f\mid f\neq u\}\cup\{\C K_f\mid f\neq u\}\cup\{\text{prime divisors of }c_{u-r_1-r_2}R\}\\
&\text{ for }r_1+r_2+1\leq u\leq r_1+2r_2\text{ and}\\
&\Pi(\{E_{12}(s_u)\})=\{\C Y_f\mid f\neq u\}\text{ for }r_1+2r_2+1\leq u\leq k. 
\end{align*}
But we have chosen all the $\C Y_f$ to be distinct from each other and from the prime divisors of the elements $c_1,\dots,c_{r_2}.$ Thus the only prime divisors of $c_uR$ for $1\leq u\leq r_2$ that can possibly be contained in $\Pi(T)=\bigcap_{i=1}^k\Pi(\{E_{12}(s_i)\})$ are elements of 
\begin{equation*}
\{\C P_1,\dots,\C P_{r_2}\}\cup\{\C Q_1,\dots,\C Q_{r_1}\}\cup\{\C K_f\mid f\neq u+r_1+r_2\}.
\end{equation*}
Thus we obtain $\Pi(T)=\bigcap_{i=1}^k\Pi(\{E_{12}(s_i)\})=\emptyset.$

Next, we have to show that $T$ maps onto a generating set of the abelianization of ${\rm SL}_2(R).$ Note that the abelianization map of ${\rm SL}_2(R)$ factors through the quotient
\begin{equation*}
(\prod_{j=1}^{r_2}{\rm SL}_2(R/\C P_j^2))\times(\prod_{i=1}^{r_1}{\rm SL}_2(R/\C Q_i^2))\times(\prod_{l=1}^q{\rm SL}_2(R/\C K_l))=:G.
\end{equation*}
First, observe that for each $u\leq 2r_2+r_1=q,$ the only non-trivial component of the image of $E_{12}(s_u)^4=E_{12}(4\cdot s_u)$ in $G$ is the ${\rm SL}_2(R/\C K_u)$-component. However, both $4$ and $s_u$ map to units in $R/\C K_u=\mathbb{F}_3$. Hence $E_{12}(s_u)^4$ maps onto a generator of the $R/\C K_u=\mathbb{F}_3$-component of the abelianization of ${\rm SL}_2(R)$ according to Corollary~\ref{abelianization}. Hence we may assume for simplicity that there are no prime ideals of the type $\C K_u$ anymore.

Second, consider an element $E_{12}(s_u)$ for $1\leq u\leq r_1.$ Now, the only non-trivial component of $E_{12}(s_u)$ after mapping to $G$ is the ${\rm SL}_2(R/\C Q_u^2)$-component. This component is $E_{12}(s_u+\C Q_u^2)$ and due to the choice of $s_u$ the element $s_u+\C Q_u^2$ is a unit in $R/\C Q_u^2=\mathbb{Z}_4.$ However, we know from Corollary~\ref{abelianization} that $E_{12}(s_u)$ gets mapped to the element $s_u+\C Q_u^2$ of the $R/\C Q_u^2$-component of the abelianization of ${\rm SL}_2(R)$ and clearly units in $\mathbb{Z}_4$ generate $(\mathbb{Z}_4,+).$  

Last, observe that for $r_1+1\leq u\leq r_1+r_2$ the only non-trivial components of $E_{12}(s_u)$ and $E_{12}(s_{u+r_2})$ after mapping to $G$ are the 
${\rm SL}_2(R/\C P_{u-r_1}^2)$-component. Due to the choice of $s_u$ and $s_{u+r_2},$ we know that $s_u+\C P_{u-r_1}^2$ is a unit of $R/\C P_{u-r_1}^2$ and 
$s_{u+r_2}+\C P_{u-r_1}^2$ is a non-trivial element of $\C P_{u-r_1}/\C P_{u-r_1}^2$. But these two elements generate $(R/\C P_{u-r_1}^2,+)$ and according to Corollary~\ref{abelianization} the elements $E_{12}(s_u)$ and $E_{12}(s_{u+r_2})$ map to the elements $s_u+\C P_{u-r_1}^2$ and $s_{u+r_2}+\C P_{u-r_1}^2$ in the $R/\C P_{u-r_1}^2$-component of the abelianization of ${\rm SL}_2(R)$ respectively. So to summarize, $T$ maps onto a generating set of the abelianization of ${\rm SL}_2(R).$

So we have proven that $T$ does in fact normally generate ${\rm SL}_2(R).$ But $T$ has $k$ elements and so $\Delta_k({\rm SL}_2(R))\geq\|{\rm SL}_2(R)\|_T$ holds. But similar to the proofs of \cite[Theorem~6.3]{General_strong_bound} or \cite[Corollary~6.2]{KLM}, it is now easy to check that indeed $\|{\rm SL}_2(R)\|_T\geq 2k$ holds. Thus we have shown $\Delta_k({\rm SL}_2(R))\geq 2k$ for all $k\geq\max\{2r_2+r_1,q\}=v(R)$ at least assuming that $r_1,q\geq 1$ and $r_1+2r_2=q$. The other cases work in essentially the same way though with merely slight differences concerning the precise definition of the various $s_u.$ This finishes the proof of the first part of Theorem~\ref{lower_bounds_sl2}.

\begin{remark}
In \cite[Theorem~3]{explicit_strong_bound_sp4_pseudo_good}, we showed that the presence of bad primes of a ring of S-algebraic integers $R$ resulted in the better lower bound $4k+r_1(R)+r_2(R)$ on $\Delta_k({\rm Sp}_4(R))$, essentially because the conjugacy diameter of ${\rm Sp}_4(\mathbb{F}_2)=S_6$ is $5$, rather than $4.$ One can use a similar argument to improve the lower bound on $\Delta_k({\rm SL}_2(R))$ for $k\geq v(R)$ from $2k$ to $2k+r_2(R).$ This is due to $\Delta_2({\rm SL}_2(\mathbb{F}_2[T]/(T^2)))=\Delta_2(\mathbb{F}_2^3\rtimes S_3)\geq 3.$ Curiously, neither the presence of unramified prime divisors $\C Q$ of $2R$ with $R/\C Q=\mathbb{F}_2$ nor of prime ideals $\C K$ with $R/\C K=\mathbb{F}_3$ results in better lower bounds on $\Delta_k({\rm SL}_2(R))$, because both $G_1:={\rm SL}_2(R/\C Q)={\rm SL}_2(\mathbb{F}_2)=S_3$ and $G_2:={\rm SL}_2(R/\C K)={\rm SL}_2(\mathbb{F}_3)$ satisfy $\Delta_1(G_i)=2$ for $i=1,2.$
\end{remark}

We also want to note the following: 

\begin{corollary}\label{square_root_v_values}
Let $D$ be a positive, square-free integer and $R$ the ring of algebraic integers in $\mathbb{Q}[\sqrt{D}].$ Then the value of $v(R)=\max\{2r_2(R)+r_1(R),q(R)\}$ is 
\begin{enumerate}
\item{$v(R)=0$ precisely if $D\equiv 5\text{ mod }8$ and $D\equiv 2\text{ mod }3$, so $\Delta_1({\rm SL}_2(R))>-\infty.$}
\item{$v(R)=1$ precisely if $D\equiv 5\text{ mod }8$ and $D\equiv 0\text{ mod }3$, so $\Delta_1({\rm SL}_2(R))>-\infty.$}
\item{$v(R)=2$ precisely if $D\equiv 1,2,3,6,7\text{ mod }8$ or $D\equiv 1\text{ mod }3,$ so $\Delta_1({\rm SL}_2(R))=-\infty.$}
\end{enumerate}
\end{corollary}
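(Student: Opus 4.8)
The plan is to reduce the corollary to the classical decomposition behaviour of the rational primes $2$ and $3$ in the quadratic ring of integers $R=\mathcal{O}_K$, $K=\mathbb{Q}[\sqrt{D}]$, and then to read off $v(R)$ and feed it into Theorem~\ref{lower_bounds_sl2}. First I would observe that, since $D>0$ is square-free, $K$ is a real quadratic field, hence has two real embeddings and no complex ones, so by Dirichlet's Unit Theorem its unit rank is $2-1=1$; thus $R$ has infinitely many units and Theorem~\ref{lower_bounds_sl2} is applicable. I would then recall the explicit form of $R$, namely $R=\mathbb{Z}[\sqrt{D}]$ if $D\equiv 2,3\pmod 4$ and $R=\mathbb{Z}[\tfrac{1+\sqrt{D}}{2}]$ if $D\equiv 1\pmod 4$, together with the Kummer--Dedekind criterion, which lets me factor a rational prime $p$ by factoring the minimal polynomial of a suitable generator modulo $p$ whenever $p$ does not divide the index $[\mathcal{O}_K:\mathbb{Z}[\theta]]$.

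The central computation is that of $r_1(R)$ and $r_2(R)$ via the factorization of $2$. For $D\equiv 2,3\pmod 4$ the ring is $\mathbb{Z}[\sqrt{D}]$ and $x^2-D$ reduces modulo $2$ to $x^2$ or $(x+1)^2$; hence $2$ ramifies as a single prime with residue field $\mathbb{F}_2$, so $r_2(R)=1$, $r_1(R)=0$ and $2r_2(R)+r_1(R)=2$. For $D\equiv 1\pmod 4$ the full ring of integers is generated by $\omega=\tfrac{1+\sqrt{D}}{2}$ with minimal polynomial $x^2-x+\tfrac{1-D}{4}$ (of index $1$, so the criterion applies at $2$); modulo $2$ this is $x(x-1)$ when $D\equiv 1\pmod 8$, so $2$ splits into two unramified primes with residue field $\mathbb{F}_2$ and $2r_2(R)+r_1(R)=r_1(R)=2$, whereas it is irreducible when $D\equiv 5\pmod 8$, so $2$ stays inert with residue field $\mathbb{F}_4$, contributing to neither $r_1(R)$ nor $r_2(R)$ and giving $2r_2(R)+r_1(R)=0$. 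In summary $2r_2(R)+r_1(R)=2$ for every square-free $D\not\equiv 5\pmod 8$ and equals $0$ exactly when $D\equiv 5\pmod 8$.

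For $q(R)$ I would factor $3$; as $3$ is odd it is coprime to the index $[\mathcal{O}_K:\mathbb{Z}[\sqrt{D}]]\in\{1,2\}$, so I may factor $x^2-D$ modulo $3$ in all cases. This reduces to $x^2$ when $D\equiv 0\pmod 3$ ($3$ ramifies, one prime with residue field $\mathbb{F}_3$, $q(R)=1$), to $(x-1)(x+1)$ when $D\equiv 1\pmod 3$ ($3$ splits, $q(R)=2$), and is irreducible when $D\equiv 2\pmod 3$ ($3$ inert with residue field $\mathbb{F}_9$, $q(R)=0$). Combining, $v(R)=\max\{2r_2(R)+r_1(R),\,q(R)\}$ with first entry in $\{0,2\}$: if $D\not\equiv 5\pmod 8$ then $2r_2(R)+r_1(R)=2$ forces $v(R)=2$, and if $D\equiv 1\pmod 3$ then $q(R)=2$ forces $v(R)=2$, which yields the congruence list in case (3). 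In the remaining situation $D\equiv 5\pmod 8$ one has $v(R)=q(R)$, so $v(R)=0$ exactly when additionally $D\equiv 2\pmod 3$ (case (1)) and $v(R)=1$ exactly when additionally $D\equiv 0\pmod 3$ (case (2)). The assertions on $\Delta_1$ then follow immediately from Theorem~\ref{lower_bounds_sl2}: part (2) gives $\Delta_1({\rm SL}_2(R))=-\infty$ precisely when $1<v(R)$, i.e. when $v(R)=2$, and part (1) gives $\Delta_1({\rm SL}_2(R))\geq 2>-\infty$ when $v(R)\leq 1$.

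The step I expect to be the main obstacle is the honest factorization of $2$, because the ring of integers changes shape with $D\bmod 4$ and, more importantly, the residue-field distinction $\mathbb{F}_2$ versus $\mathbb{F}_4$ is exactly what controls whether the inert case contributes to $2r_2(R)+r_1(R)$. The tempting error would be to count the inert prime above $2$ as an $\mathbb{F}_2$-prime; verifying instead that it has residue field $\mathbb{F}_4$ and therefore contributes nothing is precisely what singles out $D\equiv 5\pmod 8$ as the only congruence making $2r_2(R)+r_1(R)$ vanish. Everything else is careful bookkeeping with the values $2r_2(R)+r_1(R)\in\{0,2\}$ and $q(R)\in\{0,1,2\}$.
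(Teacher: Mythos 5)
Your proof is correct and follows essentially the same route as the paper: determine the splitting behaviour of $2$ and $3$ in $R$, read off $r_1(R)$, $r_2(R)$, $q(R)$, combine them into $v(R)$ by the same case analysis (with $2r_2(R)+r_1(R)\in\{0,2\}$ vanishing exactly for $D\equiv 5\bmod 8$), and feed the result into Theorem~\ref{lower_bounds_sl2}. The only difference is cosmetic: where the paper cites a reference for the decomposition laws of $2$ and $3$ in quadratic fields, you derive them by hand via Kummer--Dedekind applied to $x^2-D$ resp.\ $x^2-x+\tfrac{1-D}{4}$, which makes the argument self-contained but is otherwise the same proof.
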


\begin{proof}
We obtain from \cite[Theorem~25]{MR3822326} that the ideal $2R$ splits and ramifies in $R$ as follows:
\begin{enumerate}
\item{$2R$ is inert precisely if $D\equiv 5\text{ mod }8.$}
\item{$2R$ ramifies precisely if $D\equiv 2,3,6,7\text{ mod }8.$}
\item{$2R$ splits precisely if $D\equiv 1\text{ mod }8.$}
\end{enumerate}
We first deal with the case of $2R$ not inert. If $2R$ ramifies, then $r_2(R)=1, r_1(R)=0$ and $q(R)$ can not be bigger than $2$ and so $v(R)=2$ holds. Similarly, the splitting of $2R$ implies that $v(R)=r_1(R)=2.$
However, if $2R$ is inert, then the field of residue $R/2R$ is $\mathbb{F}_4$ and so $r_1(R)=r_2(R)=0$ and hence $v(R)=q(R).$ Then we have to distinguish two cases: 
Either $3$ is a divisor of $D$ or not. If $3$ divides $D,$ then \cite[Theorem~25]{MR3822326} implies further that $3R$ is a ramified prime in $R$ and so $q(R)=1$ and $v(R)=1$ hold in this case. If $3$ is not a prime divisor of $D,$ then \cite[Theorem~25]{MR3822326} implies two further sub-cases: $3R$ splits in $R$ if and only if 
$D$ is a quadratic residue in $\mathbb{F}_3$ and otherwise $3R$ is inert in $R.$ However, considering that $3$ does not divide $D,$ one can check that $D$ is a quadratic residue precisely if $D\equiv 1\text{ mod }3.$ So $v(R)=q(R)=2$ if $D\equiv 1\text{ mod }3$ and $v(R)=q(R)=0$ if $D\equiv 2\text{ mod }3.$
Finally, the claim of the corollary follows from Theorem~\ref{lower_bounds_sl2} together with the determined values of $v(R).$
\end{proof}

\section{Closing remarks}

This paper finishes the qualitative discussion of strong boundedness of S-arithmetic, split Chevalley groups. However, the question remains how precisely $\Delta_k(G(\Phi,R))$ for $R$ the ring of S-algebraic integers in a number field $K$ and $\Phi$ an irreducible root system depends on $R$ and $\Phi.$ The previous papers \cite{KLM} and \cite{explicit_strong_bound_sp_2n} showed that $\Delta_k(G(\Phi,R))$ has a lower bound proportional to $k\cdot{\rm rank}(\Phi)$ and an upper bound proportional to $k\cdot{\rm rank}(\Phi)^2$ with proportionality factors independent of $R$ at least in the case of $R$ having infinitely many units. However, as mentioned in \cite[Remark~5.14(2)]{General_strong_bound} it seems likely to us that $\Delta_k(G(\Phi,R))$ also has an upper bound proportional to $k\cdot{\rm rank}(\Phi)$ and in future research we will study this further.


\bibliography{bibliography}

\def\polhk#1{\setbox0=\hbox{#1}{\ooalign{\hidewidth
  \lower1.5ex\hbox{`}\hidewidth\crcr\unhbox0}}}
  \def\polhk#1{\setbox0=\hbox{#1}{\ooalign{\hidewidth
  \lower1.5ex\hbox{`}\hidewidth\crcr\unhbox0}}}
  \def\polhk#1{\setbox0=\hbox{#1}{\ooalign{\hidewidth
  \lower1.5ex\hbox{`}\hidewidth\crcr\unhbox0}}}
  \def\polhk#1{\setbox0=\hbox{#1}{\ooalign{\hidewidth
  \lower1.5ex\hbox{`}\hidewidth\crcr\unhbox0}}}
  \def\polhk#1{\setbox0=\hbox{#1}{\ooalign{\hidewidth
  \lower1.5ex\hbox{`}\hidewidth\crcr\unhbox0}}}
  \def\polhk#1{\setbox0=\hbox{#1}{\ooalign{\hidewidth
  \lower1.5ex\hbox{`}\hidewidth\crcr\unhbox0}}} \def\cprime{$'$}
\begin{thebibliography}{10}

\bibitem{MR0174604}
Hyman Bass.
\newblock {$K$}-theory and stable algebra.
\newblock {\em Inst. Hautes \'{E}tudes Sci. Publ. Math.}, (22):5--60, 1964.

\bibitem{MR0249491}
Hyman Bass.
\newblock {\em Algebraic {$K$}-theory}.
\newblock W. A. Benjamin, Inc., New York-Amsterdam, 1968.

\bibitem{MR704220}
David Carter and Gordon Keller.
\newblock Bounded elementary generation of {${\rm SL}_{n}({\mathcal O})$}.
\newblock {\em Amer. J. Math.}, 105(3):673--687, 1983.

\bibitem{MR1114610}
Douglas~L. Costa and Gordon~E. Keller.
\newblock The {$E(2,A)$} sections of {${\rm SL}(2,A)$}.
\newblock {\em Ann. of Math. (2)}, 134(1):159--188, 1991.

\bibitem{KLM}
Jarek K{\k e}dra, Assaf Libman, and Benjamin Martin.
\newblock On boundedness properties of groups.
\newblock {\em To appear in the Journal of Topology and Analysis}, 2018.

\bibitem{MR3822326}
Daniel~A. Marcus.
\newblock {\em Number fields}.
\newblock Universitext. Springer, Cham, 2018.
\newblock Second edition of [ MR0457396], With a foreword by Barry Mazur.

\bibitem{MR1335315}
Jun Morita.
\newblock The group {${\rm SL}(2,{\bf Z}_S)$} is either perfect or prosolvable.
\newblock {\em Comm. Algebra}, 23(9):3545--3550, 1995.

\bibitem{MR2357719}
Dave~Witte Morris.
\newblock Bounded generation of {${\rm SL}(n,A)$} (after {D}. {C}arter, {G}.
  {K}eller, and {E}. {P}aige).
\newblock {\em New York J. Math.}, 13:383--421, 2007.

\bibitem{MR1697859}
J\"{u}rgen Neukirch.
\newblock {\em Algebraic number theory}, volume 322 of {\em Grundlehren der
  Mathematischen Wissenschaften [Fundamental Principles of Mathematical
  Sciences]}.
\newblock Springer-Verlag, Berlin, 1999.
\newblock Translated from the 1992 German original and with a note by Norbert
  Schappacher, With a foreword by G. Harder.

\bibitem{MR2596772}
Wolfgang Rautenberg.
\newblock {\em A concise introduction to mathematical logic}.
\newblock Universitext. Springer, New York, third edition, 2010.
\newblock With a foreword by Lev Beklemishev.

\bibitem{MR272790}
Jean-Pierre Serre.
\newblock Le probl\`eme des groupes de congruence pour {SL}2.
\newblock {\em Ann. of Math. (2)}, 92:489--527, 1970.

\bibitem{MR1044049}
O.~I. Tavgen.
\newblock Bounded generability of {C}hevalley groups over rings of
  {$S$}-integer algebraic numbers.
\newblock {\em Izv. Akad. Nauk SSSR Ser. Mat.}, 54(1):97--122, 221--222, 1990.

\bibitem{explicit_strong_bound_sp_2n}
Alexander Trost.
\newblock Explicit strong boundedness for higher rank symplectic groups.
\newblock {\em submitted}, 2020.

\bibitem{explicit_strong_bound_sp4_pseudo_good}
Alexander Trost.
\newblock Bounded generation for congruence subgroups of {${\rm Sp}_4(R)$}.
\newblock {\em submitted}, 2021.

\bibitem{General_strong_bound}
Alexander Trost.
\newblock Strong boundedness of {s}plit {C}hevalley groups.
\newblock {\em To appear in the Israel Journal of Mathematics}, 2021.

\end{thebibliography}
\bibliographystyle{plain}

\end{document}